\def\conv{\text{\normalfont conv}}
\def\green{\color{black}}
\def\black{\color{black}}
\def\yellow{\color{black}}
\def\RR{\mathbb R}
\def\moda{\text{\normalfont mod }}
\def\CC{\mathbb C}
\def\NN{\mathbb N}
\def\im{\text{\normalfont im~}}
\def\re{\text{\normalfont re~}}
\def\leq{\leqslant}
\def\geq{\geqslant}
\newtheorem{theorem}{Theorem}[section]
\newtheorem{lemma}[theorem]{Lemma}
\newtheorem{notation}[theorem]{Notation}
\newtheorem{proposition}[theorem]{Proposition}
\newtheorem{corollary}[theorem]{Corollary}
\newtheorem{remark}[theorem]{Remark}
\newtheorem{example}[theorem]{Example}
\title{Geometrical aspects of expansions in complex bases}
\author{Anna Chiara Lai}
\address{Dipartimento di Scienze di Base e Applicate per L'ingegneria, Sapienza Universit\`a di Roma}
\subjclass[2000]{11A63}
\keywords{Expansions in complex bases, positional numeration systems in complex bases, iterated function systems.}
\begin{document}
\begin{abstract}
 We study the set of the representable numbers in base $q=pe^{i\frac{2\pi}{n}}$ with $\rho>1$ and $n\in \NN$ and with digits in a arbitrary finite real alphabet $A$.
We give a geometrical description of the convex hull of the representable numbers in base $q$ and alphabet $A$ and an explicit characterization of its extremal points.
A characterizing condition for the convexity of the set of representable numbers is also shown.
\end{abstract}
\maketitle
\section{Introduction}\label{s complex intro}

In this paper we deal with expansions with digits in arbitrary alphabets and bases of the form $p e^{\frac{2\pi i}{n}}$ with $p>1$ and $n\in \NN$, namely we are interested in developments in power series of the form
 \begin{equation}\label{intro e1}
 \sum_{j=1}^\infty \frac{x_j}{q^j}
\end{equation}
with the coefficients $x_j$ belonging to a finite set of positive real values $A$ named \emph{alphabet} and the ratio $q=p e^{\frac{2\pi i}{n}}$, named \emph{base}.
The assumption $p>1$ ensures the convergence of (\ref{intro e1}), being the base $q$ greater than $1$ in modulus. When a number $x$ satisfies
$$x=\sum_{j=1}^\infty \frac{x_j}{q^j},$$
 for a sequence $(x_j)_{j\geq1}$ with digits in the alphabet $A$, we say that $x$ is \emph{representable} in base $q$ and alphabet $A$ and we call $(x_j)_{j\geq1}$ a \emph{representation} or \emph{expansion} of $x$.

The first number systems in complex base seem to be those in base $2i$ with alphabet $\{0,1,2,3\}$ and the one in base $-1+i$ and alphabet $\{0,1\}$,
 respectively introduced by Knuth in \cite{Knu60} and by Penney in \cite{Pen65}. After that many papers were devoted to representability with bases belonging to
larger and larger classes of complex numbers, e.g. see \cite{KS75} for the Gaussian integers in the form $-n\pm i$ with $n\in \NN$, \cite{KK81} for the quadratic
fields and \cite{DK88} for the general case. Loreti and Komornik pursued the work in \cite{DK88} by introducing a greedy algorithm for the expansions in complex base
with non rational argument \cite{KL07}. In the eighties a parallel line of research was developed by Gilbert. In \cite{Gil81} he described the fractal nature of the set of
the representable numbers, e.g. the set of representable in base $-1+i$ with digits $\{0,1\}$ coincides with the fascinating space-filling twin dragon curves \cite{Knu71}.
Hausdorff dimension of some set of representable numbers was calculated in \cite{Gil84} and a weaker notion of self-similarity was introduced for the study of the boundary of the
representable sets \cite{Gil87}. Complex base numeration systems and in particular the geometry of the set of representable numbers have been widely studied by the point of view of
their relations with iterated function systems and tilings of the complex plane, too. For a survey on the topology of the tiles associated to bases belonging to quadratic fields we
refer to \cite{AT04}.

We study the convex hull of the set of representable numbers by giving first a geometrical description then an explicit characterization of its extremal points. We also show a characterizing
condition for the convexity of the set of representable numbers.

Expansions in complex base have several applications. For example, in the context of computer arithmetics, the interesting property of these numerations systems is that
they allow multiplication and division of complex base in a unified manner, without treating real and imaginary part separately --- see \cite{Knu71}, \cite{Gil84} and \cite{FS03}.
 Representation in complex base have been also used in cryptography with the purpose of speeding up  onerous computations such as modular exponentiations~\cite{DJM85} and multiplications
over elliptic curves~\cite{Sol00}. Finally we refer to \cite{Pic02} for a dissertation on the applications of the numerations in complex base to the compression of images on fractal tilings.

\subsection*{Organization of the paper} Most of the arguments of this paper laying on geometrical properties, in Section \ref{s geoback} we show some results on complex plane geometry.
In Section \ref{s convex char}  we characterize the shape and the extremal points of the convex hull of the set of representable numbers. In Section \ref{s complex repr} we give a necessary
and sufficient condition to have a \emph{convex} set of representable numbers, this property being sufficient for a full representability of complex numbers.
% Finally Section \ref{s complex overview} contains conclusions and possible further developments.

\section{Geometrical background}\label{s geoback}
By using the isometry between $\CC$ and $\RR^2$ we extend to $\CC$ some definitions which are proper of the plane geometry.
\smallskip
Elements of $\CC$ are considered vectors (or sometimes points) and we endow $\CC$ with the scalar product
$$\mathbf u\cdot \mathbf v:=|\mathbf u||\mathbf v|\cos(\arg\mathbf u-\arg \mathbf v).$$
\green\begin{remark}\label{product}\black
% As $\arg \mathbf u,\arg \mathbf v\in[0,2\pi)$,
 $$\mathbf u\cdot \mathbf v\leq 0$$
if and only if
$$\cos(\arg \mathbf u-\arg \mathbf v) \leq0.$$
\end{remark}\black

 A polygon in the complex plane is the bounded region of $\CC$ contained in a closed chain of segments, the \emph{edges}, whose endpoints are the \emph{vertices}.
 If two adjacent edges belong to the same line, namely if they are adjacent and parallel, then they are called \emph{consecutive} and their common endpoint is a \emph{degenerate vertex}.
If a vertex is not degenerate, it is called \emph{extremal point}.
The index operations on the vertices $\mathbf v_1,\dots,\mathbf v_H\in \CC$ of a polygon are always considered modulus their number, namely
$$\mathbf v_{h+s}:=\mathbf v_{h+s~\moda H}$$
so that for instance  $\mathbf v_{0}=\mathbf v_{H}$ and $\mathbf v_{H+1}=\mathbf v_{1}$.

Consider an edge whose endpoints are two vertices $\mathbf v_{h-1}$ and $\mathbf v_h$ and its \emph{normal vector}
\begin{equation}\label{n1}
 \mathbf n_h=(\mathbf v_h-\mathbf v_{h-1})^\bot:=-i(\mathbf v_h-\mathbf v_{h-1})
\end{equation}

A set of vertices $\{\mathbf v_1,\dots,\mathbf v_H\}$ is \emph{counter-clockwise ordered} if there exists $s\in\{0,\dots,H-1\}$ such that for every $h=1,\dots,H-1$
\begin{equation}
 \arg \mathbf n_{h+s}\leq \arg \mathbf n_{h+s+1}.
\end{equation}
A set of vertices $\{\mathbf v_1,\dots,\mathbf v_H\}$ is \emph{clockwise ordered} if there exists $s\in\{0,\dots,H-1\}$ such that for every $i=1,\dots,H-1$
\begin{equation}\label{argn}
 \arg \mathbf n_{h+s}\geq \arg \mathbf n_{h+s+1}.
\end{equation}
\begin{remark}\label{rmkconvex}
A polygon is convex if and only if its vertices are clockwise or counter-clockwise ordered (see \cite{Gems} for the $\RR^2$ case, the complex case readily follows by employing the
isometry $(x,y)\mapsto x+iy$).
\end{remark}
  We are now interested in establishing condition on a point $x\in \CC$ to belong to a convex polygon.
 The following result is an adapted version of the exterior criterion for the Point-In-Polygon problem (see for instance \cite{PIP}).

\green\begin{proposition}\label{proconv}\black
Let $P$ be a convex polygon whose vertices are $\mathbf v_1,\dots,\mathbf v_H$.
Then a complex value $x$ belongs to $P$ if and only if for every $h=1,\dots, H$
$$ (x-\mathbf v_h)\cdot \mathbf n_h\leq 0.$$
\end{proposition}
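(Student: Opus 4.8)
The plan is to prove the two implications separately, using the standard fact that a convex polygon equals the intersection of the closed half-planes determined by its edges, together with Remark \ref{rmkconvex} and Remark \ref{product}. First I would fix the orientation: without loss of generality assume the vertices $\mathbf v_1,\dots,\mathbf v_H$ are counter-clockwise ordered (the clockwise case is symmetric, or can be reduced to it by relabelling), so that for each edge $[\mathbf v_{h-1},\mathbf v_h]$ the normal $\mathbf n_h=-i(\mathbf v_h-\mathbf v_{h-1})$ is the inward-pointing normal. The key geometric claim, to be verified first, is that for an inward normal $\mathbf n_h$ the half-plane containing $P$ and bounded by the line through $\mathbf v_{h-1},\mathbf v_h$ is exactly $\{x : (x-\mathbf v_h)\cdot \mathbf n_h \le 0\}$; this is where Remark \ref{product} enters, since $(x-\mathbf v_h)\cdot\mathbf n_h\le 0$ is equivalent to $\cos(\arg(x-\mathbf v_h)-\arg\mathbf n_h)\le 0$, i.e. to the angle between $x-\mathbf v_h$ and the inward normal being obtuse, which says precisely that $x$ lies on the polygon's side of that edge-line (or on it).

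For the forward implication, suppose $x\in P$. Fix an edge with endpoints $\mathbf v_{h-1},\mathbf v_h$. Since $P$ is convex, it lies entirely in one of the two closed half-planes bounded by the line $\ell_h$ through $\mathbf v_{h-1}$ and $\mathbf v_h$; by the orientation convention that half-plane is the one on the inward-normal side, i.e. $\{y:(y-\mathbf v_h)\cdot\mathbf n_h\le 0\}$. Since $x\in P$, we get $(x-\mathbf v_h)\cdot\mathbf n_h\le 0$, and as $h$ was arbitrary this holds for all $h=1,\dots,H$.

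For the converse, suppose $(x-\mathbf v_h)\cdot\mathbf n_h\le 0$ for every $h$. Then $x$ lies in the intersection $\bigcap_{h=1}^H \{y:(y-\mathbf v_h)\cdot\mathbf n_h\le 0\}$ of the $H$ closed half-planes, each of which contains $P$; so it remains to show this intersection equals $P$ itself. For a convex polygon with counter-clockwise-ordered vertices this is classical: each bounding line $\ell_h$ supports $P$ along the edge $[\mathbf v_{h-1},\mathbf v_h]$, and a point outside $P$ can be separated from $P$ by a line, which (because $P$ is a polygon) can be taken to be one of the edge-lines $\ell_h$, placing that point in the complementary open half-plane and hence violating the corresponding inequality. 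I would phrase this as: if $x\notin P$, pick a point $p$ in the interior of $P$; the segment $[p,x]$ crosses $\partial P$ at a point lying on some edge $[\mathbf v_{h-1},\mathbf v_h]$, and beyond that crossing point the segment leaves the half-plane $\{y:(y-\mathbf v_h)\cdot\mathbf n_h\le 0\}$, so $(x-\mathbf v_h)\cdot\mathbf n_h>0$, a contradiction.

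I expect the main obstacle to be the bookkeeping around orientation and the inward-versus-outward normal: one must check that with the definition $\mathbf n_h=-i(\mathbf v_h-\mathbf v_{h-1})$ and the counter-clockwise ordering from \eqref{argn} and the preceding display, the normal genuinely points into $P$, so that the inequality has the stated sign $\le 0$ rather than $\ge 0$. This is a routine but slightly delicate computation: rotating the edge vector $\mathbf v_h-\mathbf v_{h-1}$ by $-\pi/2$ and comparing with the direction of the next edge, using that $\arg\mathbf n_h\le\arg\mathbf n_{h+1}$ means the normals turn counter-clockwise as one traverses the boundary counter-clockwise. Everything else is the standard half-plane-intersection characterization of a convex polygon, which I would cite (as the excerpt does for the Point-In-Polygon criterion, \cite{PIP}) rather than reprove from scratch. \QED
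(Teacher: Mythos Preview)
The paper does not actually supply a proof of Proposition~\ref{proconv}: it introduces the result as ``an adapted version of the exterior criterion for the Point-In-Polygon problem'' and simply cites \cite{PIP}. So there is nothing to compare against beyond that reference; your proposal, by contrast, gives a self-contained argument via the half-plane-intersection description of a convex polygon, which is exactly the content behind the cited criterion and is perfectly appropriate here.

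One correction on the bookkeeping you flagged as delicate. With the paper's convention $\mathbf n_h=-i(\mathbf v_h-\mathbf v_{h-1})$, multiplication by $-i$ rotates the edge vector \emph{clockwise} by $\pi/2$; for a counter-clockwise traversal the interior lies to the left of the edge vector, so $\mathbf n_h$ is the \emph{outward} normal, not the inward one as you wrote. The inequality $(x-\mathbf v_h)\cdot\mathbf n_h\le 0$ then says that $x-\mathbf v_h$ makes an angle of at least $\pi/2$ with the outward normal, i.e.\ $x$ lies on the inward side of the edge line, which is what you want. Your sentence ``the angle between $x-\mathbf v_h$ and the inward normal being obtuse, which says precisely that $x$ lies on the polygon's side'' has the direction reversed (obtuse angle with an \emph{inward} normal would put $x$ outside). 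This is purely a labelling slip---the half-plane $\{y:(y-\mathbf v_h)\cdot\mathbf n_h\le 0\}$ is correctly identified as the one containing $P$, and both implications go through as you outline once the word ``inward'' is replaced by ``outward.''
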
\black

%
%
% In this setting a \emph{half-plane} $S_{v,n}$ in $\CC$ may be defined starting by a point $v$ and a vector $n$, the \emph{normal vector} of $S_{v,n}$, with the formula
%  $S_{v,n}:=\{x \in \CC \mid (x-v)\cdot n\geq 0\}$. A  \emph{polygon} is the bounded intersection of a finite number of half-planes.
% A polygon can be equivalently be defined as the finite region of $\CC$ contained in a closed chain of segments, the \emph{edges}, whose endpoints are the \emph{vertices}.
%  If two adjacent edges belong to the same line, namely if they are adjacent and parallel, then they are called \emph{consecutive} and their common endpoint is a \emph{degenerate vertex}.
% If a vertex is not degenerate, it is called \emph{extremal point}. Starting from an ordered set of vertices $V$ we may construct the corresponding polygon by joining
%  with segments the elements of $V$.
%  The relation between the sets of vertices and polygons is clearly not one-to-one, e.g. by removing the degenerate vertices the polygon does not change.
%  Thus we may call \emph{equivalent} those (ordered) set of vertices defining the same polygon.
%  From now on, when we say that the vertices of a given polygon $P$ are listed in a certain set
% $V$ we mean that $V$ belongs to the equivalence class of vertices induced by $P$ and denoted $\mathcal V(P)$.

\smallskip
 The \emph{convex hull} of a set $X\subset \CC$ is the smallest convex set containing $X$  and it is denoted by using the symbol $\text{\normalfont conv}(X)$.
 When $X$ is finite, its convex hull is a convex polygon whose vertices are in $X$.
% In particular we have the relation
% \begin{equation}\label{simplex e0}
%  P \text{ is a convex polygon whose vertices are a subset of }X \quad and \quad X\subseteq P \quad \Leftrightarrow\quad P=\text{\normalfont conv}(X).
% \end{equation}
% % \smallskip
% \begin{remark}
% A convex polygon is the simplex of its vertices.
% \end{remark}

%
% \begin{notation}
% The set of vertices of a polygon $P$ is denoted by $V(P)$.
% \end{notation}
We now study the convex hull of the set $P\cup (P+\mathbf t)$, being $P$ a convex polygon, $\mathbf t \in \CC$ and $P+\mathbf t:=\{x+\mathbf t \mid x\in P\}$.
\green\begin{remark}\black
 If $\mathbf t\in \CC$ then
$$\arg \mathbf t^\bot=\left(\arg \mathbf t + \frac{3\pi}{2}\right)~\moda 2\pi$$
and
$$\arg(-\mathbf t)^\bot=\left(\arg \mathbf t + \frac{\pi}{2}\right)~\moda 2\pi$$
\end{remark}\black

% \begin{remark}
%  The convex hull of $P\cup (P+t)$ is a polygon, in fact as $\text{\normalfont conv}$ is idempotent and translation preserving:
% \begin{equation}
% \begin{split}
%  \text{\normalfont conv}(P\cup (P+t))&=\text{\normalfont conv}(\text{\normalfont conv}(V(P))\cup\text{\normalfont conv}(V(P)+t))\\
% &=\text{\normalfont conv}(\text{\normalfont conv}(V(P))\cup\text{\normalfont conv}(V(P))+t)\\
% &=\text{\normalfont conv}(V(P)\cup(V(P)+t))
% \end{split}
% \end{equation}
% but $V(P)\cup(V(P)+t)$ is a finite set, hence its convex hull is a polygon.
% \end{remark}

%
%  We assume that the vertices are indexed so to result counter-clockwise ordered, namely .  The operations on the indices of the vertices are considered modulo their number.  Normal vectors of the edges are assumed to have positive scalar product with any point of the polygon, namely they are ``internal'' to the polygon.

\begin{theorem}\label{t trans}
 Let $P$ be a convex polygon with counter-clockwise ordered vertices in $\{\mathbf v_1,\dots,\mathbf v_{H}\}$ and let $\mathbf t\in \CC$.
Denote $h_1$ and $h_2$ the indices respectively satisfying
\begin{align}
\arg \mathbf n_{h_1}\leq \arg -\mathbf t^\bot < \arg \mathbf n_{h_1+1} \label{nh1}\\
 \arg \mathbf n_{h_2}\leq \arg \mathbf t^\bot < \arg \mathbf n_{h_2+1} \label{nh2}
\end{align}
Then the convex hull of $P\cup(P+\mathbf t)$ is a polygon whose vertices are:
\begin{equation}\label{t trans e0}
 \mathbf v_{h_1},\dots,\mathbf v_{h_2},\mathbf v_{h_2}+\mathbf t,\dots,\mathbf v_{h_1-1}+\mathbf t,\mathbf v_{h_1}+\mathbf t.
\end{equation}
\end{theorem}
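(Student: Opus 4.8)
The strategy is to verify that the list of points in \eqref{t trans e0} is the vertex sequence of a convex polygon and that this polygon coincides with $\conv(P\cup(P+\mathbf t))$. I would split the argument into three parts: (1) show the listed points are counter-clockwise ordered, so by Remark~\ref{rmkconvex} they bound a convex polygon $Q$; (2) show $P\subseteq Q$ and $P+\mathbf t\subseteq Q$, so $\conv(P\cup(P+\mathbf t))\subseteq Q$; (3) show every vertex of $Q$ lies in $P\cup(P+\mathbf t)$, which together with convexity of $Q$ forces $Q\subseteq\conv(P\cup(P+\mathbf t))$. Part (3) is immediate from the form of \eqref{t trans e0}: each vertex is either some $\mathbf v_h\in P$ or some $\mathbf v_h+\mathbf t\in P+\mathbf t$.

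For part (1), I would compute the normal vectors of $Q$ along its edge sequence. Going around $Q$, the edges are: the original edges $\mathbf v_{h_1}\mathbf v_{h_1+1},\dots,\mathbf v_{h_2-1}\mathbf v_{h_2}$ of $P$ (with their normals $\mathbf n_{h_1+1},\dots,\mathbf n_{h_2}$, already increasing in argument since $P$ is counter-clockwise); then the ``bridge'' edge from $\mathbf v_{h_2}$ to $\mathbf v_{h_2}+\mathbf t$, whose direction is $\mathbf t$ and whose normal is $\mathbf t^\bot$; then the translated edges $(\mathbf v_{h_2}+\mathbf t)(\mathbf v_{h_2+1}+\mathbf t),\dots$, which are translates of the remaining edges of $P$ and carry the normals $\mathbf n_{h_2+1},\dots,\mathbf n_{h_1}$ (again increasing, cyclically); and finally the return bridge from $\mathbf v_{h_1}+\mathbf t$ back to $\mathbf v_{h_1}$, with direction $-\mathbf t$ and normal $-\mathbf t^\bot$, i.e. $(\mathbf t^\bot + \pi) \bmod 2\pi$. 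The defining inequalities \eqref{nh1} and \eqref{nh2} are exactly what is needed to splice these four arcs of normal directions into one monotone cyclic sequence: \eqref{nh2} places $\arg\mathbf t^\bot$ correctly between $\arg\mathbf n_{h_2}$ and $\arg\mathbf n_{h_2+1}$, and \eqref{nh1} (after the $\pi$ shift coming from $-\mathbf t^\bot$ versus $\mathbf t^\bot$, using the preceding Remark) places $\arg(-\mathbf t)^\bot$ correctly between $\arg\mathbf n_{h_1}$ and $\arg\mathbf n_{h_1+1}$. Hence the full cyclic list of normals is monotone, so $Q$ is convex.

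For part (2), I would use the Point-In-Polygon criterion of Proposition~\ref{proconv}: a point $x$ lies in $Q$ iff $(x-\mathbf w)\cdot\mathbf m\le 0$ for every edge of $Q$ with outer normal $\mathbf m$ and any vertex $\mathbf w$ on that edge. Take $x\in P$. For the edges of $Q$ that are edges of $P$, the inequality holds because $x\in P$ and Proposition~\ref{proconv} applied to $P$. For a translated edge with normal $\mathbf n_h$ and vertex $\mathbf v_{h-1}+\mathbf t$, I must check $(x-\mathbf v_{h-1}-\mathbf t)\cdot\mathbf n_h\le 0$; writing this as $(x-\mathbf v_{h-1})\cdot\mathbf n_h - \mathbf t\cdot\mathbf n_h$, the first term is $\le 0$ since $x\in P$, and the second term is $\ge 0$ precisely because $h$ ranges over $h_2+1,\dots,h_1$, which by \eqref{nh1}–\eqref{nh2} is exactly the set of indices with $\cos(\arg\mathbf t - \arg\mathbf n_h)\le 0$ (using Remark~\ref{product}), i.e. $\mathbf t\cdot\mathbf n_h\le 0$. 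For the two bridge edges one checks $(x-\mathbf v_{h_2})\cdot\mathbf t^\bot\le 0$ and $(x-\mathbf v_{h_1}-\mathbf t)\cdot(-\mathbf t^\bot)\le 0$; these follow from \eqref{nh1}–\eqref{nh2} together with Proposition~\ref{proconv} for $P$, since $\mathbf v_{h_2}$ is the ``extreme'' vertex of $P$ in direction $\mathbf t^\bot$ and $\mathbf v_{h_1}$ the extreme vertex in direction $-\mathbf t^\bot$. The case $x\in P+\mathbf t$ is symmetric, exchanging the roles via the substitution $x\mapsto x-\mathbf t$ and noting that the index sets swap consistently.

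The main obstacle is bookkeeping: one has to be careful that the arguments of the normal vectors are compared in a consistent cyclic sense (all taken in a common interval of length $2\pi$), that the index shifts modulo $H$ line up so that the arcs $h_1+1,\dots,h_2$ and $h_2+1,\dots,h_1$ genuinely partition $\{1,\dots,H\}$, and that degenerate vertices (where two consecutive edges are parallel, and in particular the case where an edge of $P$ is already parallel to $\mathbf t$, so an equality occurs in \eqref{nh1} or \eqref{nh2}) are handled — in that boundary case the bridge edge and an edge of $P$ merge, and $\mathbf v_{h_1}$ or $\mathbf v_{h_2}$ becomes a degenerate vertex of $Q$, consistent with the statement listing it anyway. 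Once the combinatorics of the normal-direction sequence is pinned down, each individual inequality is a one-line consequence of Proposition~\ref{proconv} and Remark~\ref{product}.
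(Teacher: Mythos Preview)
Your plan is correct and follows essentially the same route as the paper: define the candidate polygon $Q$ from the list \eqref{t trans e0}, verify convexity by checking that the normal directions form a monotone cyclic sequence (the paper does this via an explicit index shift $s$), and then prove $P\cup(P+\mathbf t)\subseteq Q$ edge-by-edge using Proposition~\ref{proconv}, which together with the trivial inclusion of the vertices gives equality. The only organizational difference is that the paper treats $x\in P$ and $x\in P+\mathbf t$ simultaneously by writing $y=x+\alpha\mathbf t$ with $\alpha\in\{0,1\}$, whereas you treat them by a symmetry argument; also note a sign slip in your sketch---for the translated edges ($h$ cyclically in $h_2+1,\dots,h_1$) one needs $\mathbf t\cdot\mathbf n_h\ge 0$, not $\le 0$, which is exactly the bookkeeping you flag at the end.
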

\begin{remark}
 As the index operations on the vertices are considered modulus $H$, if $h_1>h_2$ the expression in (\ref{t trans e0}) means
\begin{equation}\label{t trans e0exy}
 \mathbf v_{h_1},\dots,\mathbf v_{H},\mathbf v_1,\dots,\mathbf v_{h_2},\mathbf v_{h_2}+\mathbf t,\dots,\mathbf v_{h_1-1}+\mathbf t,\mathbf v_{h_1}+\mathbf t.
\end{equation}
conversely if $h_1<h_2$ the extended version of (\ref{t trans e0}) is
\begin{equation}\label{t trans e1exy}
 \mathbf v_{h_1},\dots,\mathbf v_{h_2},\mathbf v_{h_2}+\mathbf t,\dots,\mathbf v_{H}+\mathbf t,\mathbf v_1+\mathbf t,\dots,\mathbf v_{h_1-1}+\mathbf t,\mathbf v_{h_1}+\mathbf t.
\end{equation}
\end{remark}

\begin{proof}
As $P$ is a convex polygon, its vertices are either clockwise or counter-clockwise ordered. We may assume without loss of generality the latter case and, in particular, that
\begin{equation}\label{t trans e1}
 \arg(\mathbf n_h)\leq \arg(\mathbf n_{h+1})
\end{equation}
for every $h=1,\dots,H-1$. Therefore $h_1$ and $h_2$ are well defined. By definition $h_1\not=h_2$, hence we may distinguish the cases $h_1>h_2$ and $h_1<h_2$. We discuss only the latter case, because the proves are similar.
 We use the symbol $\overline P$ to denote the polygon whose vertices are listed in (\ref{t trans e0}), in particular we set
\begin{equation*}
 \overline {\mathbf v}_h=\begin{cases}
                          \mathbf v_{h+h_1-1} &\text{if } h=1,\dots,h_2-h_1+1\\
                          \mathbf v_{h+h_1-2} +\mathbf t &\text{if } h=h_2-h_1+2,\dots,H+2\\
                         \end{cases}
\end{equation*}
Remark that by shifting the vertices of $\overline P$ of $s:=H-h_1+2$ positions, we obtain the ordered list
$$\mathbf v_1+\mathbf t,\dots,\mathbf v_{h_1}+\mathbf t,\mathbf v_{h_1},\dots,\mathbf v_{h_2},\mathbf v_{h_2}+\mathbf t,\dots,\mathbf v_{H}+\mathbf t$$
and, in particular,
$$\overline{\mathbf v}_{h+s}=\begin{cases}
                                  \mathbf v_h+\mathbf t &\text{ if } h=1,\dots,h_1\\
                                  \mathbf v_{h-1} &\text{ if } h=h_1+1,\dots,h_2+1\\
                  \mathbf v_{h-2}+\mathbf t &\text{ if } h=h_2+2,\dots,H+2.
                                 \end{cases}
$$
Hence
$$\overline{\mathbf n}_{h+s}=\begin{cases}
                                  \mathbf n_h &\text{ if } h=1,\dots,h_1\\
                                  (-t)^\bot   &\text{ if } h=h_1+1\\
                                  \mathbf n_{h-1} &\text{ if } h=h_1+2,\dots,h_2+1\\
                                  (t)^\bot   &\text{ if } h=h_2+2\\
                  \mathbf n_{h-2} &\text{ if } h=h_2+3,\dots,H+2.
                                 \end{cases}
$$
Thus the definition of $h_1$ and of $h_2$, together with (\ref{t trans e1}), implies that for every $h=1,\dots, H+1$
\begin{equation}
\arg \overline{\mathbf n}_{h+s}\leq \arg\overline{\mathbf n}_{h+s+1}
\end{equation}
namely the vertices of $\overline P$ are counter-clockwise ordered. In view of Remark \ref{rmkconvex} we may deduce that $\overline P$ is convex and, in particular,
$$\overline P=\conv \{\overline{\mathbf v}_h\mid h=1,\dots,H+2\}.$$
Now we want to show
$$\overline P=\conv(P\cup(P+\mathbf t))$$
by double inclusion. Since $P$ is convex, we have
$$P=\conv\{\mathbf v_h \mid h=1,\dots,H\},$$
$$P+t=\conv\{\mathbf v_h+\mathbf t\mid h=1,\dots,H\}$$
and
$$\conv(P\cup (P+\mathbf t))=\conv\{\mathbf v_h,\mathbf v_h+\mathbf t\mid h=1,\dots,H\},$$
therefore
$$\overline P= \conv\{ \overline{\mathbf v}_h\mid h=1,\dots,H\} \subseteq\conv(P\cup(P+\mathbf t)).$$
 To prove the other inclusion, it suffices to show that $P\cup (P+\mathbf t) \subseteq \overline P$. In view of Proposition \ref{proconv},
 this is equivalent to prove that for every $y\in P\cup(P+\mathbf t)$ and for every $h=1,\dots, 2H$
\begin{equation}\label{olo}
 (y-\overline{ \mathbf v}_h)\cdot \overline{\mathbf n}_{h}\leq 0.
\end{equation}
Now consider $y\in P\cup(P +\mathbf t)$ and remark that
$$y=x+\alpha \mathbf t$$
for some $x\in P$ and $\alpha\in\{0,1\}$. Therefore we may rewrite (\ref{olo}) as follows
\begin{equation}\label{lol}
 (x+\alpha \mathbf t-\overline{ \mathbf v}_h)\cdot \overline{\mathbf n}_{h}\leq 0
\end{equation}
for every $h=1,\dots,H+2$.
 First of all remark that, by Proposition \ref{proconv}, for every $h=1,\dots,H$
\begin{equation}
 (x-\mathbf v_h)\cdot \mathbf n_h\leq 0
\end{equation}
and, consequently,
\begin{equation}
\begin{split}
(x-\mathbf v_h)\cdot \mathbf n_{h+1}&=(x-\mathbf v_{h+1})\cdot \mathbf n_{h+1}+(\mathbf v_{h+1}-\mathbf v_h)\cdot \mathbf n_{h+1}\\
&=(x-\mathbf v_{h+1})\cdot \mathbf n_{h+1}+(\mathbf v_{h+1}-\mathbf v_h)\cdot (\mathbf v_{h+1}-\mathbf v_h)^\bot\\
&=(x-\mathbf v_{h+1})\cdot \mathbf n_{h+1}\\
&\leq 0;
\end{split}
\end{equation}
Therefore,
\begin{equation}\label{est}
 \max\{\cos(\arg (x-\mathbf v_{h})-\arg \mathbf n_{h}),\cos(\arg (x-\mathbf v_{h})-\arg \mathbf n_{h+1})\}\leq 0.
\end{equation}
Now,
\begin{itemize}
 \item if $h=1$ then the definition of $h_1$, namely
$$\arg \mathbf n_{h_1}\leq \arg(-\mathbf t)^\bot < \arg \mathbf n_{h_1+1},$$
and (\ref{est}) imply
\begin{align*}
 \cos(\arg (x-\mathbf v_{h})&-\arg (-\mathbf t)^\bot) \\
&\leq\max\{\cos(\arg (x-\mathbf v_{h_1})-\arg \mathbf n_{h_1}),\cos(\arg (x-\mathbf v_{h_1})-\arg \mathbf n_{h_1+1})\}\\
&\leq0.
\end{align*}
Then
$$(x+\alpha \mathbf t -\overline{\mathbf v}_{1})\cdot \overline{\mathbf n}_1=(x-\mathbf v_{h_1})\cdot (-\mathbf t)^\bot+\alpha \mathbf t\cdot (-\mathbf t)^\bot \leq 0.$$
%
% $$\mid \arg(x-\overline{\mathbf v}_1)-\arg\overline{\mathbf n}_1\mid =\arg(x-\mathbf v_{h_1})-\arg(\mathbf n_{h_1})+\arg(\mathbf n_{h_1})-\arg(-\mathbf t^\bot)\leq \frac{\pi}{2}; $$
% on the other hand if $\arg(x-\overline{\mathbf v}_1)< \arg(\mathbf n_{h_1+1})$ then
% $$\mid \arg(x-\overline{\mathbf v}_1)-\arg\overline{\mathbf n}_1\mid =-\arg(x-\mathbf v_{h_1})+\arg(\mathbf n_{h_1+1})-\arg(\mathbf n_{h_1+1})+\arg(-\mathbf t^\bot)\leq \frac{\pi}{2}.$$
% Consequently $$(x-\overline {\mathbf v}_{1})\cdot (-\mathbf t^\bot)\leq 0$$ and
% \begin{align*}
%  (x+\alpha \mathbf t -\overline {\mathbf v}_{1})\cdot \overline{\mathbf n}_1&= (x+\alpha \mathbf t -\overline {\mathbf v}_{1})\cdot (-\mathbf t^\bot)\\
% &=(x-\overline {\mathbf v}_{1})\cdot (-\mathbf t^\bot)\\
% &\leq 0.
% \end{align*}
\item
If $h=2,\dots,h_2-h_1$ then
$$(x+\alpha \mathbf t-\overline {\mathbf v}_{h})\cdot \overline{\mathbf n}_h=
 (x-\mathbf v_{h+h_1-1})\cdot \mathbf n_{h+h_1-1}+\alpha \mathbf t\cdot \mathbf n_{h+h_1-1}\leq 0,$$
because $\mathbf t\cdot \mathbf n_{h}\leq 0$ whenever $h_1\leq h\leq h_2$. Indeed
$$\left(\arg \mathbf t +\frac{\pi}{2}\right)~\moda 2\pi=\arg (-\mathbf t^\bot)\leq \arg \mathbf n_h\leq \arg \mathbf t^\bot=\left(\arg \mathbf t +\frac{3\pi}{2}\right)~\moda 2\pi$$
hence
$$\cos(\arg \mathbf t-\arg \mathbf n_h)\leq \max\left\{\cos\left(\frac{\pi}{2}\right),\cos\left(\frac{3\pi}{2}\right)\right\}=0.$$
\item
If $h=h_2-h_1+1$ then
\begin{equation*}
(x+\alpha \mathbf t-\overline {\mathbf v}_{h})\cdot \overline{\mathbf n}_h= (x+\alpha\mathbf t-\mathbf v_{h_2}-\mathbf t)\cdot \mathbf t^\bot=(x-\mathbf v_{h_2})\cdot \mathbf t^\bot\leq 0
\end{equation*}
Indeed
$$\arg \mathbf n_{h_2}\leq \arg \mathbf t^\bot < \arg \mathbf n_{h_2+1},$$
and (\ref{est}) imply
\begin{align*}
 \cos(\arg (x-\mathbf v_{h})&-\arg (\mathbf t)^\bot)\leq \\
&\leq\max\{\cos(\arg (x-\mathbf v_{h_2})-\arg \mathbf n_{h_2}),\cos(\arg (x-\mathbf v_{h_2})-\arg \mathbf n_{h_2+1})\}\\
&\leq0.
\end{align*}
\item Finally if $h=h_2-h_1+2,\dots,H+2$ then
\begin{align*}
(x+\alpha \mathbf t -\overline {\mathbf v}_{h})\cdot \overline{\mathbf n}_h&=
 (x+\alpha \mathbf t-\mathbf v_{h+h_1-2}-\mathbf t)\cdot \mathbf n_{h_1-2+h}\\
& \leq (x-\mathbf v_{h+h_1-2})\cdot \mathbf n_{h_1-2+h}-(1-\alpha)\mathbf t\cdot  \mathbf n_{h_1-2+h}\\
&\leq 0
\end{align*}
because $1-\alpha\geq 0$ and $\mathbf t\cdot  \mathbf n_{h_1-2+h}\geq 0$ for every $h=h_2-h_1+2,\dots,H+2$.
\end{itemize}
Hence (\ref{lol}) holds for every $x\in P$ and $\alpha\in\{0,1\}$ and the proof is complete.
\end{proof}
\green\begin{corollary}\label{t trans c1}\black
 Let $P$ be a convex polygon with $l$ edges and let $\mathbf t\in \CC$. Then $\conv(P \cup (P+\mathbf t))$ has $H+2$ (possibly consecutive) edges, in particular $l$ edges of $\conv(P\cup(P+\mathbf t))$ are parallel
to the edges of $P$ and $2$ edges are parallel to $\mathbf t$.
\end{corollary}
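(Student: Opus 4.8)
The plan is to read everything off Theorem \ref{t trans}, which already produces the explicit vertex list of $\conv(P\cup(P+\mathbf t))$; the corollary is then a pure counting statement. Let $H$ denote the number of edges (equivalently vertices) of $P$, so that $l=H$, and fix a counter-clockwise ordering $\mathbf v_1,\dots,\mathbf v_H$ of its vertices as in Theorem \ref{t trans}. That theorem says $\conv(P\cup(P+\mathbf t))$ is the polygon whose vertices, in cyclic order, are
$$\mathbf v_{h_1},\dots,\mathbf v_{h_2},\ \mathbf v_{h_2}+\mathbf t,\dots,\mathbf v_{h_1-1}+\mathbf t,\ \mathbf v_{h_1}+\mathbf t,$$
with $h_1\ne h_2$ fixed by (\ref{nh1}) and (\ref{nh2}). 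I would carry out the case $h_1<h_2$, using the unrolled list (\ref{t trans e1exy}); the case $h_1>h_2$ is identical using (\ref{t trans e0exy}).

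First I would count vertices. The first block $\mathbf v_{h_1},\dots,\mathbf v_{h_2}$ has $h_2-h_1+1$ vertices, and the second block, whose subscripts run from $h_2$ forward to $h_1$ modulo $H$, has $H-h_2+h_1+1$ vertices; the two blocks share no vertex, so $\conv(P\cup(P+\mathbf t))$ has $(h_2-h_1+1)+(H-h_2+h_1+1)=H+2$ vertices, hence $H+2$ edges. This is the first assertion. Next I would classify those $H+2$ edges by the direction of the segment joining consecutive listed vertices. An edge of the form $[\mathbf v_j,\mathbf v_{j+1}]$ with $h_1\le j\le h_2-1$ is an edge of $P$; an edge of the form $[\mathbf v_j+\mathbf t,\mathbf v_{j+1}+\mathbf t]$, with $j$ running from $h_2$ forward to $h_1-1$ modulo $H$, is a translate of the edge $[\mathbf v_j,\mathbf v_{j+1}]$ of $P$, hence parallel to it. Counting, this yields $(h_2-h_1)+(H-h_2+h_1)=H=l$ edges, and as $j$ ranges exactly once over $1,\dots,H$ each edge-direction of $P$ is realised once. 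The two remaining edges are $[\mathbf v_{h_2},\mathbf v_{h_2}+\mathbf t]$ and the closing edge $[\mathbf v_{h_1}+\mathbf t,\mathbf v_{h_1}]$, of directions $\mathbf t$ and $-\mathbf t$ respectively, hence parallel to $\mathbf t$. Thus $H+2=l+2$ with the claimed split.

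Finally I would justify the parenthetical ``possibly consecutive''. Since $P$ is a genuine polygon no two of its adjacent edges are parallel, so inside either block no two adjacent edges of $\conv(P\cup(P+\mathbf t))$ are collinear; collinearity can occur only where a $\mathbf t$-edge meets a $P$-edge. Comparing the normal of the $\mathbf t$-edge with the neighbouring normals $\mathbf n_{h_1}$, resp.\ $\mathbf n_{h_2}$, and using the strict right-hand inequalities in (\ref{nh1})--(\ref{nh2}), one sees that this happens precisely when equality holds on the left in (\ref{nh1}) or in (\ref{nh2}) --- equivalently, when $P$ has an edge parallel to $\mathbf t$; in that event the $\mathbf t$-edge and that parallel edge of $P$ are adjacent, so by the definition in Section \ref{s geoback} they are consecutive and their common endpoint is a degenerate vertex. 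In every case the number of edges stays $H+2$; only the number of extremal points may be smaller. There is no substantial obstacle here: the only point needing care is keeping the subscripts of the second block straight modulo $H$, together with the routine verification that the case $h_1>h_2$ behaves symmetrically via (\ref{t trans e0exy}).
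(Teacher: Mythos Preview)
Your proposal is correct and follows exactly the paper's approach: the paper's own proof is the single sentence ``It immediately follows by the list of vertices given in (\ref{t trans e0}),'' and you have simply written out that counting explicitly. Your final paragraph on when the $\mathbf t$-edges become consecutive is more than the corollary requires (that analysis is really the content of Corollary~\ref{t trans c2}), and your premise there that ``no two adjacent edges of $P$ are parallel'' is not guaranteed by the paper's definition of polygon, which explicitly allows degenerate vertices; but this does not affect the main edge count, which is all the corollary asserts.
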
\black

\begin{proof}
 It immediately follows by the list of vertices given in (\ref{t trans e0}).
\end{proof}

\green\begin{corollary}\label{t trans c2}\black
 Let $P$ be a convex polygon with $e$ extremal points and let $\mathbf t\in \CC$. Then:
\begin{enumerate}[(a)]
\item if $\mathbf t$ is not parallel to any edge of $P$ then $\conv(P\cup(P+\mathbf t))$ has $e+2$ extremal points;
\item if $\mathbf t$ is parallel to $1$ edge of $P$ then $\conv(P\cup(P+\mathbf t))$ has $e+1$ extremal points;
\item if $\mathbf t$ is parallel to $2$ edges of $P$ then $\conv(P\cup(P+\mathbf t))$ has $e$ extremal points.
\end{enumerate}
\end{corollary}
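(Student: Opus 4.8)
The plan is to read the combinatorial type of $\overline P:=\conv(P\cup(P+\mathbf t))$ off the explicit vertex list (\ref{t trans e0}) of Theorem \ref{t trans} and to count its extremal points by keeping track of the directions of its edges. The auxiliary fact I would isolate first is that \emph{the number of extremal points of a convex polygon equals the number of distinct directions realised by its edges}. Indeed, by Remark \ref{rmkconvex} one may list the vertices counter-clockwise; then the arguments of the normals $\mathbf n_h$, hence the directions of the edges, are weakly monotone as one goes once around the boundary, so the edges of a prescribed direction form a single contiguous run (a \emph{side}), distinct sides realise distinct directions, and a vertex is degenerate exactly when it lies in the interior of such a run. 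Thus counting distinct edge directions amounts to counting the non-degenerate vertices among those listed in (\ref{t trans e0}).

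Next I would identify the edges of $\overline P$ from (\ref{t trans e0}). Traversing that cyclic list, the stretch $\mathbf v_{h_1},\dots,\mathbf v_{h_2}$ contributes the edges $\mathbf v_{h_1}\mathbf v_{h_1+1},\dots,\mathbf v_{h_2-1}\mathbf v_{h_2}$ of $P$ unchanged, while the stretch $\mathbf v_{h_2}+\mathbf t,\dots,\mathbf v_{h_1}+\mathbf t$ contributes the $\mathbf t$-translates of the remaining edges $\mathbf v_{h_2}\mathbf v_{h_2+1},\dots,\mathbf v_{h_1-1}\mathbf v_{h_1}$ of $P$; jointly these give exactly one copy, of unchanged direction, of each of the $H$ edges of $P$. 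The only two further edges of $\overline P$ are $[\mathbf v_{h_2},\mathbf v_{h_2}+\mathbf t]$, whose direction is $\mathbf t$, and the closing edge $[\mathbf v_{h_1}+\mathbf t,\mathbf v_{h_1}]$, whose direction is $-\mathbf t$; this is also the content of Corollary \ref{t trans c1}. Hence the set of directions realised by the edges of $\overline P$ is the set of directions realised by the edges of $P$ together with the two directions $\mathbf t$ and $-\mathbf t$, and by the auxiliary fact the number of extremal points of $\overline P$ is $e$ plus the number of the directions $\mathbf t,-\mathbf t$ that are not already realised by an edge of $P$.

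It then remains to evaluate that number in the three cases. In case (a) no edge of $P$ is parallel to $\mathbf t$, so neither $\mathbf t$ nor $-\mathbf t$ is realised by an edge of $P$ and both count as new: $e+2$. In case (b) there is exactly one edge of $P$ parallel to $\mathbf t$, whose direction is $\mathbf t$ or $-\mathbf t$, so exactly one of the two is already realised and the other is new: $e+1$. In case (c) there are two edges of $P$ parallel to $\mathbf t$; since the directions of the edges of a convex polygon rotate monotonically, no two distinct sides are parallel, so these two edges lie on opposite sides and realise the two opposite directions $\mathbf t$ and $-\mathbf t$, both of which are already present, so nothing new is added: $e$. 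I expect the delicate point to be the auxiliary fact of the first paragraph — in particular getting the degeneracy analysis to handle runs of several collinear edges correctly — together with the observation used in case (c) that two edges parallel to $\mathbf t$ must point in opposite directions; both rest on the monotonicity of the normals that already underlies Theorem \ref{t trans}.
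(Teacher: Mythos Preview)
Your argument is correct and is essentially the paper's own proof, recast in the language of ``distinct oriented edge directions'' rather than the equivalent bookkeeping $e_t=l_t-d_t$ with normals that the paper uses; both read the edge list of $\overline P$ off Theorem~\ref{t trans}/Corollary~\ref{t trans c1} and then count how many of the two new $\mathbf t$-parallel edges create genuinely new sides. One wording to tighten: in case~(c) you write ``no two distinct sides are parallel,'' which is literally false (a rectangle); what you mean---and use---is that no two distinct sides share the same \emph{oriented} direction, so two edges parallel to $\mathbf t$ must realise the opposite orientations $\mathbf t$ and $-\mathbf t$.
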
\black

\begin{proof}
Set $l$ the number of edges of $P$ and $d:=l-e$ the number of
degenerate vertices. By definition, a vertex $\mathbf v$ is
degenerate if the normal vectors of its adjacent edges have the same
argument.
 It follows by (\ref{t trans e0}) that the normal vectors of $\conv(P\cup (P+\mathbf t))$ are the following:
\begin{equation*}
 \mathbf n_{h_1},\dots,\mathbf n_{h_2-1},\mathbf t^\bot,\mathbf n_{h_2},\dots,\mathbf n_{h_1-1},-\mathbf t^\bot
\end{equation*}
with $h_1$ and $h_2$ satisfying:
\begin{align*}
\arg \mathbf n_{h_1}\leq \arg -\mathbf t^\bot <\arg \mathbf n_{h_1+1} \\
 \arg \mathbf n_{h_2}\leq \arg \mathbf t^\bot <\arg \mathbf n_{h_2+1}
\end{align*}
% It immediately follows by the definition of $i_1$ and $i_2$ that $\mathn_{h_1-1}\not = n_{h_1}$ and $n_{h_2-1}\not = n_{h_2}$.
% Moreover $t^\bot$ is not equal to $n_{h_1+1}$ and $n_{h_2+1}$, otherwise the scalar products $n_{h_1-1}\cdot t$ and $n_{h_2}\cdot t$ would be null.
Thus by denoting $l_t$ the number of vertices of $P_t$, by $d_t$ the number of the degenerate vertices and by $e_t$ the number of extremal points,
 by Corollary \ref{t trans c1} we have that $l_t=l+2$ and :
\begin{equation*}
 d_t=\begin{cases}
      d &\text{ if $\mathbf t^\bot$ is neither parallel to $\mathbf n_{h_1}$ nor to $\mathbf n_{h_2-1}$};\\
      d+1&\text{ if $\mathbf t^\bot$ is parallel to $\mathbf n_{h_1}$ or to $\mathbf n_{h_2-1}$};\\
      d+2&\text{ if $\mathbf t^\bot$ is parallel to $\mathbf n_{h_1}$ and to $\mathbf n_{h_2-1}$}.\\
     \end{cases}
\end{equation*}
Hence thesis follows by the relation $e_t=l_t-d_t$.
\end{proof}

We conclude this section with the following result on the convexity of $P\cup P+\mathbf t$.

\begin{proposition}\label{l:geometrical hq}
 Let $P$ be a convex polygon and assume that $2$ edges of $P$ are parallel to the real axis on the complex plane and that their length is equal to $1$. Consider
$t_1,\dots,t_m\in\RR$ such that
\begin{equation}\label{orf}
 t_1<\cdots<t_m.
\end{equation}
Then
$$\max_{i=1,\dots,m-1}{t_{i+1}-t_i}\leq 1$$
 if and only if
$$\bigcup_{i=1}^m (P+t_i)$$
is a convex set.
\end{proposition}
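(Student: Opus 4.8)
The plan is to analyze the set $U:=\bigcup_{i=1}^m(P+t_i)$ by cutting it with horizontal lines. First I would recall the standard description of a convex polygon: there are an interval $[y_0,y_1]$ (the range of imaginary parts of $P$) and functions $a$ convex and $b$ concave on $[y_0,y_1]$, with $a\le b$ and both continuous (the boundary of $P$ being a closed polygonal curve), such that $P=\{\,x+iy : y_0\le y\le y_1,\ a(y)\le x\le b(y)\,\}$; here $a(y),b(y)$ are the leftmost and rightmost abscissas of $P$ at height $y$, and their convexity/concavity is immediate from the convexity of $P$. All the strength of the hypothesis will be concentrated in the width $w(y):=b(y)-a(y)$. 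An edge of $P$ parallel to the real axis lies on a horizontal supporting line of $P$, hence at height $y_0$ or $y_1$; at each of those heights $P$ meets the line in a single segment, so there is at most one such edge on each, and the two prescribed unit edges must therefore be exactly the bottom and top edges of $P$. Hence $w(y_0)=w(y_1)=1$, and since $w=b-a$ is concave (a concave minus a convex function) this forces $w(y)\ge 1$ for every $y\in[y_0,y_1]$, with equality at the two endpoints.

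Next I would translate. Because each $t_i$ is real, the cross-section of $P+t_i$ at a height $y\in[y_0,y_1]$ is the interval $[a(y)+t_i,\ b(y)+t_i]$, so the cross-section of $U$ at that height is the union of the $m$ intervals $[a(y)+t_i,\ b(y)+t_i]$, all of the common length $w(y)$ and with left endpoints $a(y)+t_1<\cdots<a(y)+t_m$ spaced by the gaps $t_{i+1}-t_i$; outside $[y_0,y_1]$ the cross-section of $U$ is empty. The elementary observation is that such a union of equal-length intervals is a single interval — necessarily $[a(y)+t_1,\ b(y)+t_m]$ — if and only if every gap $t_{i+1}-t_i$ is at most $w(y)$, and is disconnected otherwise.

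Now both directions follow. If $\max_i(t_{i+1}-t_i)\le 1$, then as $w(y)\ge 1$ throughout $[y_0,y_1]$ every cross-section of $U$ is a single interval, so $U=\{\,x+iy : y_0\le y\le y_1,\ a(y)+t_1\le x\le b(y)+t_m\,\}$; since $a(\cdot)+t_1$ is convex and $b(\cdot)+t_m$ is concave, a one-line check on a convex combination of two points of $U$ shows $U$ is convex. Conversely, suppose $t_{j+1}-t_j>1$ for some $j$, and look at the height $y_1$, where $w(y_1)=1$. There the cross-section intervals with index $\le j$ all lie in $\{x\le a(y_1)+t_j+1\}$ and those with index $\ge j+1$ all lie in $\{x\ge a(y_1)+t_{j+1}\}$, while $a(y_1)+t_j+1<a(y_1)+t_{j+1}$; so $U\cap(\RR+iy_1)$ is contained in the union of two disjoint closed half-lines and meets both, hence is disconnected. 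Since a convex set has interval, in particular connected, cross-sections, $U$ is not convex.

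I expect the one genuinely delicate point to be the geometric claim that the two prescribed unit edges must sit at the extreme heights of $P$ — equivalently $\min_y w(y)=1$ — which is exactly where ``parallel to the real axis'' and ``length $1$'' are used and is what pins $1$ down as the sharp threshold; the remainder is routine interval bookkeeping. One could instead induct on $m$, tracking the shape of $\conv(\bigcup_{i\le k}(P+t_i))$ via Theorem~\ref{t trans} and Corollary~\ref{t trans c1}, but the cross-section argument is shorter and exposes the role of the width function directly.
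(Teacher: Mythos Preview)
Your argument is correct and follows essentially the same horizontal cross-section strategy as the paper: both reduce to showing that the slice of $P$ at every height has length at least $1$, whence the slice of $U$ at every height is the single interval $[a(y)+t_1,\,b(y)+t_m]$, and both detect non-convexity via a disconnected slice at a height where the width equals~$1$. Your derivation of $w(y)\ge 1$ from the concavity of $w=b-a$ makes explicit a step the paper merely asserts (``the convexity of $P$ also implies $|I_y|\ge 1$''), and your verification of convexity of $U$ directly from the convexity of $a$ and concavity of $b$ is a little cleaner than the paper's translation argument, but the skeleton is the same.
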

\begin{proof}
% Hence we need to prove that $P \cup P+t$ is a convex set if and only if $|t| \leq e = 1$.

\vskip0.5cm\noindent
\emph{Only if part.}

\smallskip\noindent
Suppose $t_{i_1+1}-t_{i_1}>1$ for some $i_1\in\{1,\dots,m-1\}$. Let $\mathbf v_1$ and $\mathbf v_2$ two extremal points of $P$  such that
$$\re(\mathbf v_1)\leq \re (\mathbf v_2)$$
and
$$\im(\mathbf v_1)=\im(\mathbf v_2)$$
By the inequality above, the edge with endpoints $\mathbf v_1$ and $\mathbf v_2$ is parallel to the real axis and, consequently,
\begin{equation}\label{reff}
 \re(\mathbf v_2)-\re(\mathbf v_1)\leq 1<t_{i_1+1}-t_{i_1}.
\end{equation}
Now define for every $i=1,\dots,m$
$$\mathbf e_i:=\{\mathbf v_1+x\mid x\in[t_i,t_i+\re \mathbf v_2-\re \mathbf v_1]\}$$
and remark that the convexity of $P$ implies
$$\bigcup_{i=1}^mP+t_i\cap \{\mathbf v_1+t\mid t\in \RR\}=\bigcup_{i=1}^m\mathbf e_i$$
By (\ref{orf}), for every $\alpha\in(0,1)$ and every $i=1,\dots,m-1$
$$\alpha (\mathbf v_2+t_i)+(1-\alpha)(\mathbf v_1+t_{i+1})\in \bigcup_{i=1}^m\mathbf e_i$$
if and only if
$$\alpha (\mathbf v_2+t_i)+(1-\alpha)(\mathbf v_1+t_{i+1})\in \mathbf e_i\cap\mathbf e_{i+1}.$$
Now, setting
$\mathbf x_1:=\mathbf v_2+t_{i_1}$ and $x_1:=\mathbf v_1+t_{i_1+1}$, we have that for every $\alpha\in(0,1)$
$$\alpha \mathbf x_1+(1-\alpha)\mathbf x_2\in \conv\left(\bigcup_{i=1}^m (P+t_i)\right)$$
and
$$\alpha \mathbf x_1+(1-\alpha)\mathbf x_2\in \{\mathbf v_1+t\mid t\in \RR\}$$
but, in view of (\ref{reff}),
$$\mathbf e_{i_1}\cap \mathbf e_{i_1+1}=\emptyset$$
hence
$$\alpha \mathbf x_1+(1-\alpha)\mathbf x_2\not\in \bigcup_{i=1}^m(P+t_i).$$
Therefore $\bigcup_{i=1}^m(P+t_i)$ is not a convex set.

\vskip0.5cm\noindent
\emph{If part}

\smallskip\noindent
Let $m_P$ and $M_P$ be such  that $P$ is a subset of $\{x\in \CC | m_p\leq \im(x) \leq M_P\}$. For every $m_P \leq y \leq M_P$ we consider the set
$$I_y:=\{x\in \RR \mid x+i y \in P\}$$
As $P$ is convex, $I_y$ is an interval, whose endpoints are denoted by $a_y$ and $b_y$.
The convexity of  $P$ also implies
$\mid I_y\mid\geq 1$;
 therefore $t_{i+1}-t_i\leq 1$ implies
$$[a_y+t_1,b_y+ t_m]=\bigcup_{i=1}^m [a_y+t_i,b_y+t_i].$$
and
 \begin{equation} \label{l:convtran e1}
 x+iy\in \bigcup_{i=1}^m (P+t_i) \quad \text{ if and only if }\quad x\in [a_y+t_{1},b_y+t_m].
\end{equation}
We want to prove that $\bigcup_{i=1}^m (P+t_i)$ is a convex set by showing that it contains any convex combination of its points. So fix
$$\mathbf x_1,\mathbf x_2 \in\bigcup_{i=1}^m (P+t_i).$$
If $\mathbf x_1$ and $\mathbf x_2$ are both in $P+t_i$ for some $i=1,\dots,m$ the convexity of $P$ implies the thesis.
Otherwise suppose $\mathbf x_1\in P+t_{i_1}$ and $\mathbf x_2\in P+t_{i_2}$ with $t_{i_1}<t_{i_2}$
and consider a convex combination $\alpha \mathbf x_1 + (1-\alpha)\mathbf x_2$, with $\alpha\in[0,1]$.
Remark that $\mathbf x_2 \in P+t_{i_2}$ implies  $\mathbf x_2-(t_{i_2}-t_{i_1}) \in P+t_{i_1}$ and, consequently,
$$\mathbf x:=\alpha \mathbf x_1+ (1-\alpha)(\alpha\mathbf x_2-\mathbf t)\in P+t_{i_1}.$$
Therefore
\begin{equation*}
 \alpha \mathbf x_1 + (1-\alpha)\mathbf x_2 = \mathbf x +(1-\alpha)\mathbf t= x+i y+(1-\alpha)(t_{i_2}-t_{i_1}).
\end{equation*}
 for some $x\in I_y+t_{i_1}$ and $y\in[m_P,M_P]$. Hence
\begin{align*}
 a_y+t_{1}\leq a_y+t_{i_1}\leq x+(1-\alpha)(t_{i_2}-t_{i_1})&\leq b_y+t_{i_1}+(1-\alpha)(t_{i_2}-t_{i_1})\\
&\leq b_y+t_m.
\end{align*}
In view of (\ref{l:convtran e1}) we finally get $\alpha \mathbf x_1 + (1-\alpha)\mathbf x_2 \in \bigcup_{i=1}^m P+t_i$ and hence the thesis.
\end{proof}

\section{Characterization of the convex hull of representable numbers}\label{s convex char}
In this section we investigate the shape of the convex hull of the set of representable numbers in base $p e^{\frac{2\pi}{n}i}$ and with alphabet $A$. We adopt the following notations.

% \smallskip
 \green\begin{notation}\black
We use the symbol $\Lambda_{n,p,A}$ to denote the set of representable numbers in base $q_{n,p}:=p e^{\frac{2\pi}{n}i}$ and with alphabet $A$, namely
$$\Lambda_{n,p,A}=\left\{\sum_{j=1}^\infty \frac{x_j}{ q^{j}_{n,p}} \mid x_j\in A\right\}.$$
We set
$$X_{n,p}:=\left\{\sum_{k=0}^{n-1} {x}_k q_{n,p}^k \mid {x}_k\in \{0,1\}\right\}$$
and
$$P_{n,p}:=\text{\normalfont conv}(X_{n,p}).$$
   \end{notation}

\begin{remark}\black
   As $X_{n,p}$ is finite, $P_{n,p}$ is a polygon.
\end{remark}\black
The following result represents a first simplification of our problem: in fact the characterization of the convex hull of the infinite set $\Lambda_{n,p,A}$ is showed to be equivalent to the study of $P_{n,p}$.

\green\begin{lemma}[Farkas' Lemma]\black
 Let $A\in \RR^{m\times n}$ and $ b\in \RR^{n}$. The system $A\mu=b$ admits a non-negative solution if and only if for every $u\in \RR^m$ the inequality $A^\top \cdot u\geq 0$
 implies $b^\top \cdot u\geq 0$.
\end{lemma}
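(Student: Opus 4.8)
The plan is to prove the nontrivial implication by a separating-hyperplane argument, after disposing of the easy direction.

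First I would handle the ``only if'' direction, which is immediate: if $\mu\ge 0$ satisfies $A\mu=b$, then for any $u\in\RR^m$ with $A^\top u\ge 0$ we get $b^\top u=(A\mu)^\top u=\mu^\top(A^\top u)\ge 0$, since the componentwise product of two componentwise-nonnegative vectors has nonnegative entries and hence nonnegative sum. For the converse I would argue by contraposition. Suppose $A\mu=b$ has no solution with $\mu\ge 0$; equivalently, $b$ does not belong to the cone $C:=\{A\mu\mid \mu\ge 0\}$, which is exactly the set of nonnegative combinations of the columns $a_1,\dots,a_n$ of $A$. The goal is to produce $u\in\RR^m$ with $A^\top u\ge 0$ but $b^\top u<0$.

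The key technical point, and the step I expect to be the real obstacle, is that $C$ is a \emph{closed} convex subset of $\RR^m$. I would establish this with a Carath\'eodory-type lemma for cones: every element of $C$ is already a nonnegative combination of a linearly independent subcollection of $\{a_1,\dots,a_n\}$ (remove generators with strictly positive coefficient along any linear dependence until independence is reached, keeping nonnegativity). Hence $C$ is the finite union, over subsets $S$ with $\{a_j\}_{j\in S}$ linearly independent, of the sets $\{\sum_{j\in S}\mu_j a_j\mid \mu_j\ge 0\}$; each such set is the image of a closed orthant under an injective linear map, so it is closed, and a finite union of closed sets is closed.

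Once $C$ is known to be closed and convex and $b\notin C$, I would take $p$ to be the point of $C$ nearest to $b$ (it exists by closedness) and set $u:=p-b\neq 0$. The variational characterization of the projection gives $(b-p)^\top(c-p)\le 0$, i.e. $u^\top c\ge u^\top p$ for all $c\in C$. Taking $c=0\in C$ yields $u^\top p\le 0$; taking $c=\lambda a_j$ with $\lambda\to+\infty$ forces $u^\top a_j\ge 0$ for each $j$, that is $A^\top u\ge 0$; and then, writing $p=\sum_j\mu_j a_j$ with $\mu_j\ge 0$, we also get $u^\top p=\sum_j\mu_j(u^\top a_j)\ge 0$, so $u^\top p=0$. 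Consequently $b^\top u=u^\top b=u^\top(p-u)=u^\top p-\|u\|^2=-\|u\|^2<0$, and this $u$ witnesses the failure of the implication, completing the contrapositive. I would also remark that the closedness of $C$ can be avoided altogether by applying Fourier--Motzkin elimination to the system $A\mu=b$, $\mu\ge 0$: the procedure either returns a nonnegative solution or terminates in a contradictory inequality, and the coefficients used to derive that contradiction assemble into the desired certificate $u$; this route is purely combinatorial but longer to write out.
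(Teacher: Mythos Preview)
Your argument is correct and is the standard separating-hyperplane proof of Farkas' Lemma: the easy direction is handled as you do, and for the converse you show the finitely generated cone $C=\{A\mu\mid \mu\ge 0\}$ is closed via the conical Carath\'eodory reduction, then project $b$ onto $C$ and read off the certificate $u=p-b$. Each step is sound; in particular your computation $b^\top u=u^\top p-\|u\|^2=-\|u\|^2<0$ is clean, and the closedness argument (finite union of images of closed orthants under injective linear maps) is exactly the right level of care.

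There is, however, nothing to compare against in the paper: Farkas' Lemma is stated there as a classical result and is \emph{not} proved; it is simply invoked in the proof of the subsequent lemma. So your write-up supplies a proof where the paper gives none. One minor remark on the statement itself (not your proof): as written in the paper the dimensions are inconsistent---with $A\in\RR^{m\times n}$ one needs $b\in\RR^m$, not $\RR^n$, for $A\mu=b$ and $b^\top u$ to make sense; your argument tacitly uses the corrected dimensions, which is the only reading compatible with the paper's own application of the lemma.
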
\black
\green\begin{lemma}\label{lfar}\black
 Let $q\in \CC$, $n\in \NN$ and $\lambda_k\in[\lambda_{min},\lambda_{max}]$ for every $k=0,\dots,n-1$, $\lambda_{min},\lambda_{max}\in\RR$ and
$$S:=\{(x_0^{(h)},\dots,x_{n-1}^{(h)})\mid x_k^{(h)}\in\{\lambda_{min},\lambda_{max}\};~ k=1,\dots,n;~ h=1\dots,2^{n}\}$$
be the set of sequences of length $k$ and with digits in $\{\lambda_{min},\lambda_{max}\}$.

Then there exist $\mu_1,\dots,\mu_{2^n}\geq 0$ such that
$$\sum_{h=1}^{2^n}\mu_h=1$$ and
\begin{equation}\label{far}
 \sum_{k=0}^{n-1}q^k\lambda_k=\sum_{h=1}^{2^n}\mu_h\sum_{k=0}^{n-1}q^kx_k^{(h)}
\end{equation}
\end{lemma}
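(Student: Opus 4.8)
The plan is to recognize that the point $\sum_{k=0}^{n-1} q^k \lambda_k$ lies in the convex hull of the finite set $\{\sum_{k=0}^{n-1} q^k x_k^{(h)} : h = 1,\dots,2^n\}$, and then to extract the coefficients $\mu_h$ as barycentric coordinates, using Farkas' Lemma to convert the geometric inclusion into the required non-negative, summing-to-one representation. More precisely, I would first separate the coordinatewise structure: for each fixed $k$, since $\lambda_k \in [\lambda_{\min},\lambda_{\max}]$, we can write $\lambda_k = (1-\alpha_k)\lambda_{\min} + \alpha_k \lambda_{\max}$ for a unique $\alpha_k \in [0,1]$. The point $\sum_k q^k \lambda_k$ is then a multilinear (tensor-product) convex combination of the $2^n$ vertex points $\sum_k q^k x_k^{(h)}$, with weights $\mu_h = \prod_{k} \beta_{k,h}$ where $\beta_{k,h} = 1-\alpha_k$ if the $h$-th sequence has $x_k^{(h)} = \lambda_{\min}$ and $\beta_{k,h} = \alpha_k$ if $x_k^{(h)} = \lambda_{\max}$. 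One checks directly that $\sum_h \mu_h = \prod_k ((1-\alpha_k) + \alpha_k) = 1$ and that these weights reproduce \eqref{far} by expanding the product; this is the explicit construction and in fact already proves the lemma without invoking Farkas.

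Alternatively — and this is presumably why Farkas' Lemma is stated — one can argue non-constructively. Form the real matrix $A \in \RR^{3 \times 2^n}$ (working in $\RR^2 \cong \CC$, so each complex point contributes two rows, plus one row of all ones to encode $\sum \mu_h = 1$), whose $h$-th column is $(\re \sum_k q^k x_k^{(h)},\ \im \sum_k q^k x_k^{(h)},\ 1)$, and let $b = (\re \sum_k q^k \lambda_k,\ \im \sum_k q^k \lambda_k,\ 1)$. The existence of $\mu \ge 0$ with $A\mu = b$ is then equivalent, by Farkas, to: for every $u \in \RR^3$, if $A^\top u \ge 0$ then $b^\top u \ge 0$. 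Writing $u = (u_1, u_2, c)$ and setting $\mathbf w := u_1 + i u_2 \in \CC$, the hypothesis $A^\top u \ge 0$ says $\re(\overline{\mathbf w}\, \sum_k q^k x_k^{(h)}) + c \ge 0$ for all $h$, and the desired conclusion is $\re(\overline{\mathbf w}\, \sum_k q^k \lambda_k) + c \ge 0$. Since $\re(\overline{\mathbf w}\, \sum_k q^k \lambda_k) = \sum_k \re(\overline{\mathbf w}\, q^k)\,\lambda_k$ and likewise for each vertex, this reduces to a purely real, coordinatewise inequality: for each $k$, the real number $\re(\overline{\mathbf w}\, q^k)\lambda_k$ is a convex combination of $\re(\overline{\mathbf w}\, q^k)\lambda_{\min}$ and $\re(\overline{\mathbf w}\, q^k)\lambda_{\max}$, hence lies between the minimum and maximum over the two choices; summing over $k$ and adding $c$ (appropriately split, say $c = \sum_k c/n$) gives the bound.

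The step requiring the most care is the bookkeeping in the Farkas argument: one must check that the inequality $A^\top u \ge 0$ holding for \emph{all} $2^n$ sign patterns $h$ really does force, for each individual index $k$, that $\re(\overline{\mathbf w}\,q^k)\lambda_{\min} + c/n \ge 0$ and $\re(\overline{\mathbf w}\,q^k)\lambda_{\max} + c/n \ge 0$ — this is because the sequences range over \emph{all} of $\{\lambda_{\min},\lambda_{\max}\}^n$, so in particular one can vary the $k$-th coordinate independently while the others are held at whichever value makes the corresponding term smallest. Granting that, each $\lambda_k \in [\lambda_{\min},\lambda_{\max}]$ gives $\re(\overline{\mathbf w}\,q^k)\lambda_k + c/n \ge \min\{\re(\overline{\mathbf w}\,q^k)\lambda_{\min}, \re(\overline{\mathbf w}\,q^k)\lambda_{\max}\} + c/n \ge 0$, and summing over $k=0,\dots,n-1$ yields $b^\top u = \re(\overline{\mathbf w}\sum_k q^k\lambda_k) + c \ge 0$, which is exactly the hypothesis of Farkas' Lemma; hence the non-negative solution $\mu$ exists, and its last-coordinate constraint gives $\sum_h \mu_h = 1$ and \eqref{far}. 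I would present the direct tensor-product construction as the main proof and mention the Farkas route as the conceptual explanation, since the former is shorter and self-contained.
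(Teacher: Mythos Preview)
Your tensor-product construction is correct and is genuinely different from the paper's argument: it directly exhibits the weights $\mu_h = \prod_k \beta_{k,h}$ and verifies both $\sum_h \mu_h = 1$ and the stronger coordinatewise identity $\sum_h \mu_h x_k^{(h)} = \lambda_k$, from which \eqref{far} follows by applying the linear map $(x_0,\dots,x_{n-1})\mapsto\sum_k q^k x_k$. The paper proves exactly this same coordinatewise identity, but non-constructively via Farkas' Lemma applied to the $(n+1)\times 2^n$ real system $\sum_h \mu_h x_k^{(h)} = \lambda_k$ ($k=0,\dots,n-1$) together with $\sum_h \mu_h = 1$; that is, it works entirely in $\RR^{n+1}$ before ever mentioning $q$, not in $\RR^3$ as in your alternative. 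The Farkas alternative is verified there by choosing, for a given $u\in\RR^{n+1}$, the cube vertex with $x_k^{(h)}=\lambda_{\min}$ when $u_k\ge 0$ and $x_k^{(h)}=\lambda_{\max}$ otherwise, so that $\lambda_k u_k \ge x_k^{(h)} u_k$ termwise. Your explicit construction is shorter, self-contained, and yields concrete coefficients; the paper's route is the one the lemma's label anticipates.

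Your $\RR^3$-Farkas sketch, however, has a slip in the bookkeeping you flagged as delicate. From $\sum_k r_k x_k^{(h)} + c \ge 0$ for all $h$ (with $r_k := \re(\overline{\mathbf w}\,q^k)$) you \emph{cannot} deduce the per-coordinate bound $\min\{r_k\lambda_{\min}, r_k\lambda_{\max}\} + c/n \ge 0$; splitting $c$ evenly across indices is unjustified, and varying one coordinate while holding the others at their ``worst'' values does not isolate a single term plus $c/n$. What the hypothesis does give, by choosing the minimizing endpoint in every coordinate simultaneously, is the aggregate bound $\sum_k \min\{r_k\lambda_{\min}, r_k\lambda_{\max}\} + c \ge 0$. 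Since $r_k\lambda_k \ge \min\{r_k\lambda_{\min}, r_k\lambda_{\max}\}$ for each $k$, summing yields $\sum_k r_k\lambda_k + c \ge 0$ directly, and the Farkas conclusion follows. With this fix your $\RR^3$ argument is valid and proves the lemma as stated, though not the stronger coordinatewise statement that both the paper's proof and your first argument actually establish.
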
\black

\begin{proof}
 Consider the linear system with $2^n$ indeterminates $\mu_1,\dots,\mu_{2^n}$ and with $n+1$ equations
$$\begin{cases}
\displaystyle{\sum_{h=1}^{2^n}\mu_h x_{k}^{(h)}=\lambda_k} \quad \text { for }k=0,\dots, n-1;\\
\displaystyle{\sum_{h=1}^{2^n}\mu_h=1.}
  \end{cases}$$
We may rewrite the above system in the form
$$A \mu =b$$
where the $h$-th column of $A$ satisfies $A_h^\top=(x_1^{(h)},\dots,x_k^{(h)},1)$ and $b^\top=(\lambda_1,\dots,\lambda_k,1)$. In order to apply  Farkas' Lemma consider $u\in \RR^{n+1}$ such that
$$A_h^\top \cdot u=\sum_{k=1}^nx_k^{(h)}u_k+u_{n+1}\geq 0$$
for every $h=1,\dots,2^n$. We recall that $\lambda_k\in[\lambda_{min},\lambda_{max}]$ for every $k$, then we may consider the sequence $(x_1^{(h)},\dots,x^{(h)}_n)\in S$ such that for every
 $k=1,\dots,n$
$$x^{(h)}_k\geq \lambda_k \quad \text{ if and only if }\quad u_k\geq0.$$
 Consequently
$$\sum_{k=1}^n(\lambda_k-x_k^{(h)})u_k\geq 0$$
and
$$b^\top \cdot u=\sum_{k=1}^n\lambda_ku_k+u_{n+1}\geq \sum_{k=1}^nx_k^{(h)}u_k+u_{n+1}\geq 0.$$
By Farkas' Lemma, there exist $\mu_1,\dots,\mu_{2^n}\geq0$ with $\sum_{h=1}^{2^n}\mu_h=1$ such that
$$\sum_{h=1}^{2^n}\mu_h x_{k}^{(h)}=\lambda_k;$$
for every $k=1,\dots,n$ and, consequently, $\mu_1,\dots,\mu_{2^n}$ also satisfy
$$\sum_{k=0}^{n-1}q^k\lambda_k=\sum_{k=0}^{n-1}q^k\sum_{h=1}^{2^n}\mu_h x_k^{(h)}=\sum_{h=1}^{2^n}\mu_h\sum_{k=0}^{n-1}q^kx_k^{(h)}.$$
\end{proof}

\green\begin{proposition}\label{t convex p1}\black
 For every $n\geq 1$, $p>1$ and $q_{n,p}=p e^{\frac{2\pi}{n}i}$:
\begin{equation}
 \text{\normalfont conv}(\Lambda_{n,p,A})=\frac{\max A-\min A}{p^n-1}\cdot P_{n,p}+\frac{1}{p^n-1}\sum_{k=0}^{n-1}\min A~q^{k}_{n,p}.
\end{equation}
\end{proposition}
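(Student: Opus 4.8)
The plan is to group the digits of an expansion into blocks of length $n$ and to exploit that $q_{n,p}^{n}=p^{n}$ is a real number greater than one. For $x=\sum_{j\ge 1}x_j q_{n,p}^{-j}\in\Lambda_{n,p,A}$ I collect the indices $j\in\{mn+1,\dots,(m+1)n\}$ for $m=0,1,2,\dots$, and rewrite the $m$-th block as $p^{-(m+1)n}y_m$ where $y_m:=\sum_{l=0}^{n-1}x_{(m+1)n-l}\,q_{n,p}^{l}$ belongs to the finite set $Y:=\{\sum_{l=0}^{n-1}c_l q_{n,p}^{l}\mid c_l\in A\}$ (the reindexing $j\mapsto(m,l)$, $j=(m+1)n-l$, is a bijection onto the pairs with $m\ge0$, $0\le l\le n-1$, and the rearrangement of the absolutely convergent series is legitimate). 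This yields the identity $\Lambda_{n,p,A}=\{\sum_{m\ge0}p^{-(m+1)n}y_m\mid y_m\in Y\}$. Since $\sum_{m\ge0}p^{-(m+1)n}=\tfrac1{p^{n}-1}$, each such $x$ equals $\sum_{m\ge0}\tfrac{p^{n}-1}{p^{(m+1)n}}\cdot\tfrac{y_m}{p^{n}-1}$, a convex combination (with infinitely many terms, hence a limit of finite ones) of points of the finite set $Y':=\{\tfrac{y}{p^{n}-1}\mid y\in Y\}$, so $x\in\conv(Y')$. Conversely, taking the constant block sequence $y_m\equiv y$ corresponds to the purely periodic expansion of digit-period $(c_{n-1},\dots,c_0)$, so $\tfrac{y}{p^{n}-1}\in\Lambda_{n,p,A}$ for every $y\in Y$, i.e. $Y'\subseteq\Lambda_{n,p,A}$. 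Hence $Y'\subseteq\Lambda_{n,p,A}\subseteq\conv(Y')$, and therefore $\conv(\Lambda_{n,p,A})=\conv(Y')=\tfrac1{p^{n}-1}\conv(Y)$.

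The second step computes $\conv(Y)$. Put $\lambda_{min}:=\min A$, $\lambda_{max}:=\max A$ and $Z:=\{\sum_{k=0}^{n-1}q_{n,p}^{k}\lambda_k\mid\lambda_k\in[\min A,\max A]\}$. By Lemma \ref{lfar} applied with $q=q_{n,p}$, every point of $Z$ is a convex combination of points $\sum_{k=0}^{n-1}q_{n,p}^{k}x_k^{(h)}$ with $x_k^{(h)}\in\{\min A,\max A\}\subseteq A$, i.e. of points of $Y$, so $Z\subseteq\conv(Y)$; the reverse inclusion is immediate because $A\subseteq[\min A,\max A]$ and $Z$ is convex, whence $\conv(Y)=Z$. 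Writing $\lambda_k=\min A+(\max A-\min A)t_k$ with $t_k\in[0,1]$ gives $Z=\min A\sum_{k=0}^{n-1}q_{n,p}^{k}+(\max A-\min A)\{\sum_{k=0}^{n-1}q_{n,p}^{k}t_k\mid t_k\in[0,1]\}$, and the last set equals $P_{n,p}=\conv(X_{n,p})$ — again by Lemma \ref{lfar} with alphabet $\{0,1\}$, or simply because a linear image of $[0,1]^{n}=\conv(\{0,1\}^{n})$ is the convex hull of the images of the vertices. Thus $\conv(Y)=(\max A-\min A)P_{n,p}+\sum_{k=0}^{n-1}\min A\,q_{n,p}^{k}$, and dividing by $p^{n}-1$ as in the first step yields the claimed formula.

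I expect the main obstacle to be the bookkeeping in the first step: the block reindexing, the legitimacy of rearranging the series, and — most crucially — observing that one need not pass to a closure. The clean equality $\conv(\Lambda_{n,p,A})=\conv(Y')$ holds precisely because the extreme points of the convex hull are genuinely attained inside $\Lambda_{n,p,A}$ by periodic expansions rather than merely approximated, which in turn makes $\conv(\Lambda_{n,p,A})$ automatically a (compact) polygon. Everything after that is elementary convex geometry, with Lemma \ref{lfar} carrying the only nontrivial argument.
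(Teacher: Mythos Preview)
Your proof is correct and follows essentially the same route as the paper: both exploit $q_{n,p}^{\,n}=p^{n}$ to regroup the series into $n$-periodic blocks, land in the set $\{\sum_{k}q_{n,p}^{k}\lambda_{k}\mid\lambda_{k}\in[\min A,\max A]\}$ (after rescaling by $p^{n}-1$), and then invoke Lemma~\ref{lfar} to reduce to the extreme digits $\{\min A,\max A\}$. Your version is in fact more careful than the paper's, since you make the reverse inclusion explicit by observing that the vertices of $Y'$ are realised in $\Lambda_{n,p,A}$ by purely periodic expansions, a point the paper leaves implicit.
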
\black

\begin{proof}
Fix $n$ and $p$ and, in order to lighten the notations, set $q=q_{n,p}$, $X_n=X_{n,p}$ and $P_{n}=P_{n,p}$.
Consider
$$x=\sum_{j=1}^\infty \frac{x_j}{q^j}\in \Lambda_{n,p,A}.$$
 As $q^n=p^n$,
\begin{equation}\label{t convex p1 e1}
\begin{split}
 \sum_{j=1}^\infty \frac{x_j}{q^{j}}
&=\sum_{k=0}^{n-1}q^k\sum_{j=1}^\infty\frac{ x_{jn-k}}{ p^{jn}}\\
&=\sum_{k=0}^{n-1}q^k \lambda_k
\end{split}
\end{equation}
with
$$\lambda_k:=\sum_{j=1}^\infty \frac{x_{jn-k}}{ p^{jn}}\in  \left[\frac{\min A}{p^n-1},\frac{\max A}{p^n-1}\right].$$
 Therefore by Lemma \ref{lfar} any element of $\Lambda_{n,p,A}$
is a convex combination of complex numbers that can be written in
 the form $$\sum_{k=0}^{n-1}q^k \tilde x_k$$
 with $\tilde x_k \in \left\{\dfrac{\min A}{p^n-1},\dfrac{\max A}{p^n-1}\right\}$. Thus
\begin{equation*}
\begin{split}
 \text{\normalfont conv}\left(\Lambda_{n,p,A}\right)&=\text{\normalfont conv}\left( \left\lbrace \sum_{k=0}^{n-1}q^k \tilde x_k\mid \tilde x_k\in \left\lbrace\frac{\min A}{p^n-1},\frac{\max A}{p^n-1}\right\rbrace\right\rbrace \right)\\
&=\frac{\max A-\min A}{p^n-1}\text{\normalfont conv}( X_{n,p})+\frac{1}{p^n-1}\sum_{k=0}^{n-1}\min A~ q^{k}.
\end{split}
 \end{equation*}
\end{proof}

We now give a geometrical description of
 $\conv(\Lambda_{n,p,A})$.

\green\begin{theorem}[Convex hull of the representable numbers in complex base]\label{t convex}\black
 For every $n\geq 1$, $p>1$ and alphabet $A$, the set  $\text{\normalfont conv}(\Lambda_{n,p,A})$ is a polygon with the following properties:
\begin{enumerate}[(a)]
 \item the edges are pairwise parallel to $q^0,\dots,q^{n-1}$, where $q=p e^{\frac{2\pi i}{n}}$;
\item if $n$ is odd then $\text{\normalfont conv}(\Lambda_{n,p,A})$ has $2n$ extremal points;
\item if $n$ is even then $\text{\normalfont conv}(\Lambda_{n,p,A})$ has $n$ extremal points.
\end{enumerate}
\end{theorem}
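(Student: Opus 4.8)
The plan is to start from Proposition~\ref{t convex p1}, which reduces the problem to understanding $P_{n,p}=\conv(X_{n,p})$: since $\conv(\Lambda_{n,p,A})$ is obtained from $P_{n,p}$ by a positive homothety (scaling by $\tfrac{\max A-\min A}{p^n-1}>0$) followed by a translation, it has exactly the same number of extremal points and edges, with edge directions preserved. So it suffices to prove that $P_{n,p}$ is a polygon whose edges are pairwise parallel to $q^0,\dots,q^{n-1}$, with $2n$ extremal points when $n$ is odd and $n$ extremal points when $n$ is even.

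The key observation is that $X_{n,p}=\{\sum_{k=0}^{n-1}x_k q^k : x_k\in\{0,1\}\}$ can be built up one power of $q$ at a time. Define $P^{(0)}=\{0\}$ (a degenerate polygon, the single point $0$) and inductively
\[
P^{(m)}=\conv\bigl(\{\textstyle\sum_{k=0}^{m-1}x_kq^k : x_k\in\{0,1\}\}\bigr),\qquad m=1,\dots,n,
\]
so that $P^{(m)}=\conv\bigl(P^{(m-1)}\cup(P^{(m-1)}+q^{m-1})\bigr)$ and $P_{n,p}=P^{(n)}$. Now I apply Theorem~\ref{t trans} with $\mathbf t=q^{m-1}$ at each step: it tells us $P^{(m)}$ is again a polygon, and Corollaries~\ref{t trans c1} and~\ref{t trans c2} control how the edge directions and the count of extremal points evolve. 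Concretely, Corollary~\ref{t trans c1} gives that the edges of $P^{(m)}$ consist of (copies of) the edges of $P^{(m-1)}$ together with two new edges parallel to $q^{m-1}$; by induction the edge directions of $P^{(m)}$ are exactly (a sub-multiset of) $\{q^0,\dots,q^{m-1}\}$, with each direction occurring an even number of times (the "pairwise parallel" statement), and in particular the edges of $P_{n,p}$ are pairwise parallel to $q^0,\dots,q^{n-1}$, which is part~(a).

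The extremal-point count is the delicate part and is where Corollary~\ref{t trans c2} does the work: adding the translate by $q^{m-1}$ increases the number of extremal points by $2$, $1$, or $0$ according to whether $q^{m-1}$ is parallel to $0$, $1$, or $2$ edges of $P^{(m-1)}$. Since the edges of $P^{(m-1)}$ point in the directions $q^0,\dots,q^{m-2}$ (each appearing as a pair once $m-1\ge 1$, so there are $2$ edges of each such direction present), the question is whether the direction $q^{m-1}=p^{m-1}e^{2\pi i(m-1)/n}$ coincides with one of $q^0,\dots,q^{m-2}$ as a direction in $\CC$, i.e.\ modulo real scaling, equivalently whether $e^{2\pi i(m-1)/n}=\pm e^{2\pi i k/n}$ for some $0\le k\le m-2$. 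The "$+$" case never happens for $0\le k\le m-2<m-1\le n-1$; the "$-$" case happens iff $\tfrac{m-1}{n}\equiv k/n+\tfrac12\pmod 1$, i.e.\ iff $n$ is even and $k=m-1-\tfrac n2$, which requires $m-1\ge \tfrac n2+k\ge\tfrac n2$. So: while $m-1<\lceil n/2\rceil$ each new direction $q^{m-1}$ is genuinely new (parallel to no existing edge), contributing $+2$ extremal points per step; once $m-1\ge n/2$ (only possible when $n$ is even), $q^{m-1}$ is antiparallel to the already-present direction $q^{m-1-n/2}$, which occurs as $2$ edges, so by case~(c) of Corollary~\ref{t trans c2} the count does not increase. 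Counting: starting from $0$ extremal points at $P^{(1)}$ (a segment, or handled directly as the $2$ endpoints — I will set the bookkeeping so that $P^{(2)}$ has $4$ extremal points), we get $+2$ at each of the first roughly $n/2$ steps and $0$ thereafter. For $n$ odd all $n$ steps are "new-direction" steps — but since a segment contributes $2$, not $4$, one must be careful: I expect the clean statement is that $P^{(m)}$ has $2m$ extremal points for $m\le\lceil n/2\rceil$ and then stabilizes, giving $2n$ when $n$ is odd (all directions distinct up to sign) and $2\cdot\tfrac n2=n$ when $n$ is even. The main obstacle, and the step requiring genuine care, is precisely this bookkeeping at the very first steps (when $P^{(m-1)}$ is a point or a segment, so Corollary~\ref{t trans c2} does not literally apply to a "polygon with $e$ extremal points") together with the boundary case $m-1=n/2$; I would handle the degenerate initial polygons by a direct argument and verify that the inductive count $e_m=2\min(m,\lceil n/2\rceil)$ (with the convention that a segment has $2$ extremal points) is consistent, then read off (b) and (c).
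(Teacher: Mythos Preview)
Your approach is essentially identical to the paper's: reduce to $P_{n,p}$ via Proposition~\ref{t convex p1}, build $P^{(m)}$ recursively as $\conv(P^{(m-1)}\cup(P^{(m-1)}+q^{m-1}))$, and apply Corollaries~\ref{t trans c1} and~\ref{t trans c2} step by step, distinguishing the odd and even cases according to whether $q^{m-1}$ is parallel to an earlier $q^k$. The paper handles the degenerate start exactly as you anticipate, treating $P_1=[0,1]$ as a polygon with two vertices and two overlapped edges so that $e_1=2$.

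One slip to fix: your unified formula $e_m=2\min(m,\lceil n/2\rceil)$ is wrong for $n$ odd, since it gives $e_n=n+1$ rather than $2n$; your own verbal reasoning (``for $n$ odd all $n$ steps are new-direction steps'') is correct and matches the paper's recursion $e_m=e_{m-1}+2$ for all $m=1,\dots,n$, so just drop the attempted unification and keep the two cases separate.
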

 \black
\begin{proof}
Fix $n$ and $p$ and, in order to lighten the notations, set $q=q_{n,p}$, $X_n=X_{n,p}$ and $P_{n}=P_{n,p}$.
 Our proof is based on showing $P_{n}$ to have properties (a), (b) and (c);
indeed these properties are invariant by rescaling and translation and, by Proposition \ref{t convex p1}, they extend from $P_{n}$ to $\conv(\Lambda_{n,p,A})$.

\smallskip
 We divide the proof in three parts.

\vskip0.5cm
\noindent\emph{Part 1. The edges of $P_{n}$ are pairwise parallel to $q^0,\dots,q^{n-1}$.}

\smallskip
\noindent
To the end of studying $P_n$, we consider the sets
\begin{equation*}
 X_{m}:=\left\lbrace\sum_{j=0}^{m-1}x_j q^j \mid x_j \in \{0,1\}\right\rbrace;
\end{equation*}
for $m=1,\dots,n$. We have
\begin{equation}\label{t convex e1}
 \begin{cases}
  X_{1}=\{0,1\};\\
  X_{m}=X_{m-1}\cup (X_{m-1}+q^{m-1})
 \end{cases}
\end{equation}
and consequently
\begin{equation}\label{t convex e2}
 \begin{cases}
  P_{1}=[0,1];\\
  P_{m}=\text{\normalfont conv}\left(P_{m-1}\cup (P_{m-1}+q^{m-1})\right).
 \end{cases}
\end{equation}
We remark that $P_{1}$
can be looked at as a polygon with two vertices and with two overlapped edges that are parallel to $q^0$. By iteratively applying Corollary \ref{t trans c1}
we deduce that $P_{n}$ has pairwise parallel edges and every couple of edges is either parallel  to $q^0$ or to any of the successive translation, i.e. $q^1,\dots,q^{n-1}$.

\vskip0.5cm
\noindent\emph{Part 2. If $n$ is odd then $P_{n}$ has $2n$ extremal points.}

\smallskip
\noindent
First remark that $n$ odd implies that $q^j$ and $q^k$ are not parallel for every $j\not=k$.
 We showed above that for every $m=1,\dots,n$ the edges of the polygon $P_{m-1}$ are parallel to $q^0,\dots,q^{m-2}$ and, consequently,
the translation $q^{m-1}$ is not parallel to any edge.
 Hence, denoting $e_{m}$ the number of the extremal points of $P_{m}$, the first part of Corollary
 \ref{t trans c1} implies that $e_{m}$ is defined by the recursive relation:
\begin{equation*}
\begin{cases}
 &e_{1}=2\\
 &e_{m}=e_{m-1}+2
\end{cases}
\end{equation*}
for every $m=1,\dots,n$. Hence $e_{n}=2n$.

\vskip0.5cm
\noindent\emph{Part 3. If $n$ is even then $P_{n}$ has $n$ extremal points.}

\smallskip
\noindent If $n$ is even then $q_n^{m+n/2}$ is parallel to $q_n^m$ for every $m=1,\dots,n/2$.
Since $P_{m}$ has pairwise parallel edges, we deduce by (a) and (c) in Corollary \ref{t trans c2} that $e_{m}$ is defined by the relation:
\begin{equation*}
\begin{cases}
 e_{0}=0;&\\
 e_{m}=e_{m-1}+2 \quad &\text{if } m=1,\dots,n/2;\\
 e_{m}=e_{m-1} \quad &\text{if } m=n/2+1,\dots,n;
\end{cases}
\end{equation*}
hence $e_{n}=n$ and this concludes the proof.
\end{proof}

\begin{example}
 If $n=3$ and if $p>1$ then  $\conv(\Lambda_{n,p,A})$ is an hexagon.

 If $n=4$ and  if $p>1$ then $\conv(\Lambda_{n,p,A})$ is a rectangle.
\end{example}

\begin{figure}[h]
\centering
 \subfloat[ $\conv(X_{3,2^{1/3}})$ ]{
\includegraphics[scale=0.5]{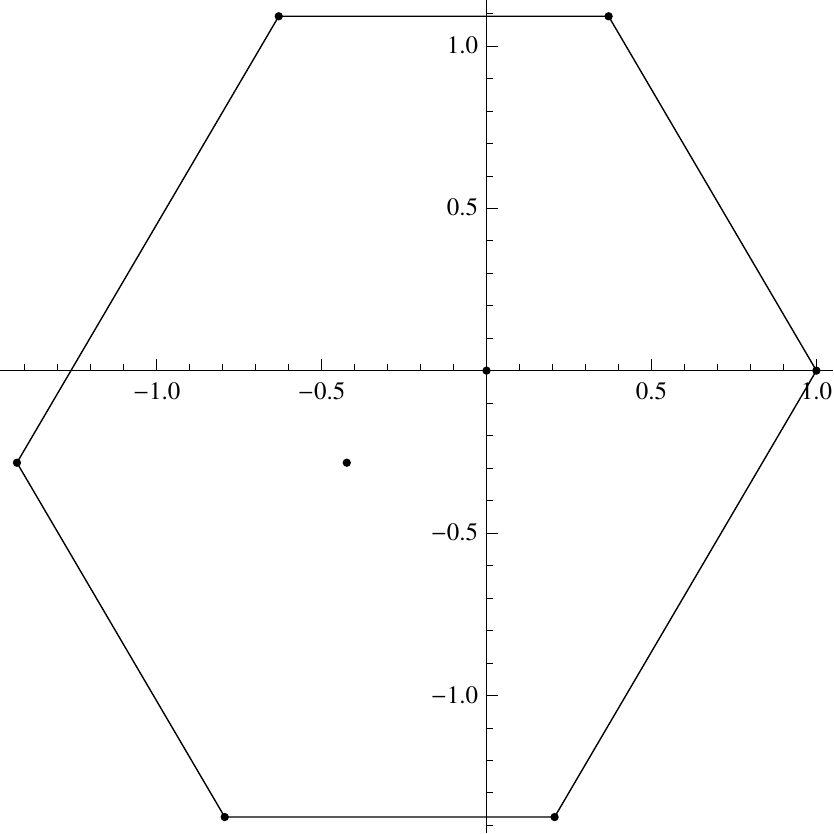}
 }\hskip2.5cm
 \subfloat[$\conv(X_{4,2^{1/4}})$ ]{\includegraphics[scale=0.5]{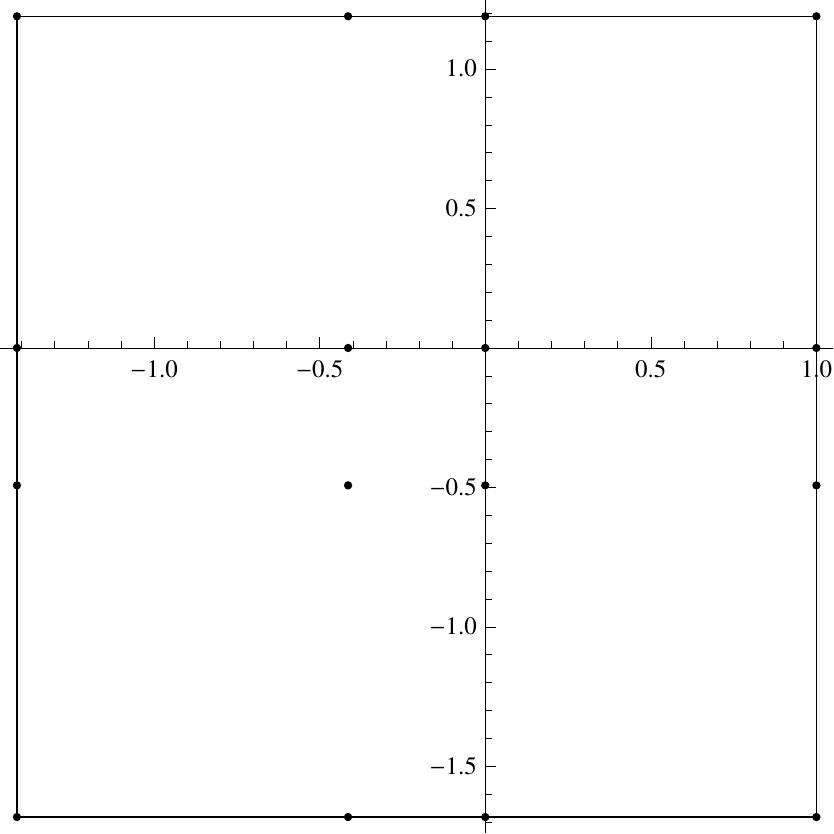}}
\caption{Convex hulls of $X_{3,2^{1/3}}$ and of $X_{4,2^{1/4}}$. Remark that when $A=\{0,1\}$ then $P_{n,p}=\text{\normalfont conv} (X_{n,p})$ coincides with $\dfrac{1}{p^n-1}\text{\normalfont conv}(\Lambda_{n,p,A})$.}
% % \label{fig:ch3ch4}
\end{figure}

\begin{figure}
\centering
 \subfloat[$n=3$ ]{
\includegraphics[scale=0.5]{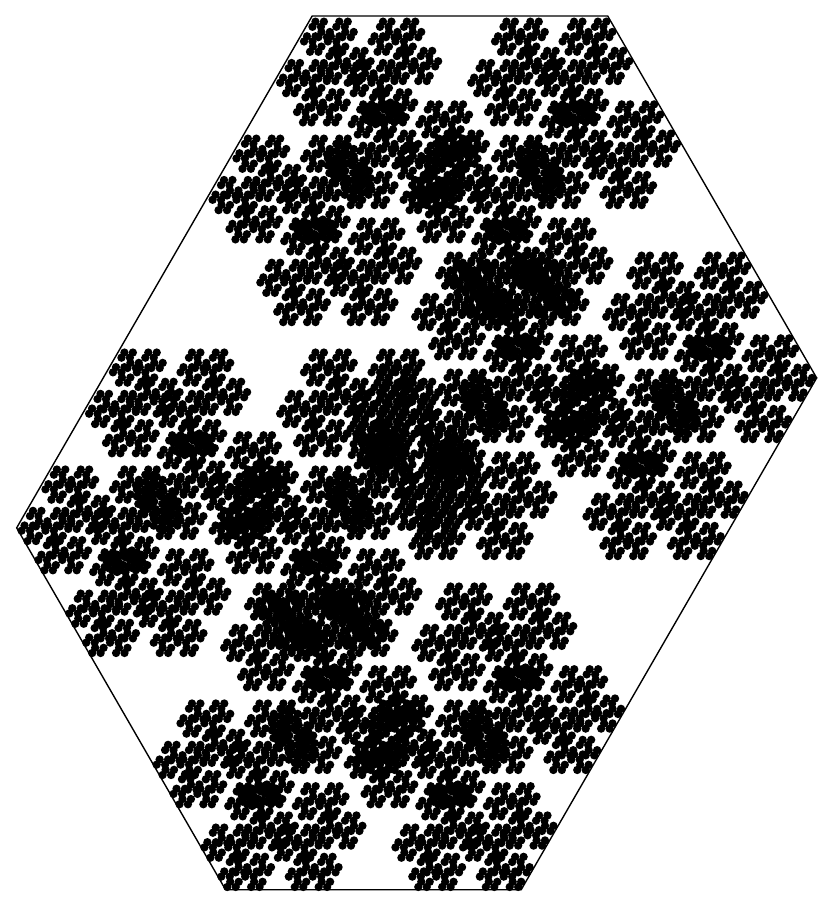}
 }
\subfloat[$n=4$ ]{
\includegraphics[scale=0.4]{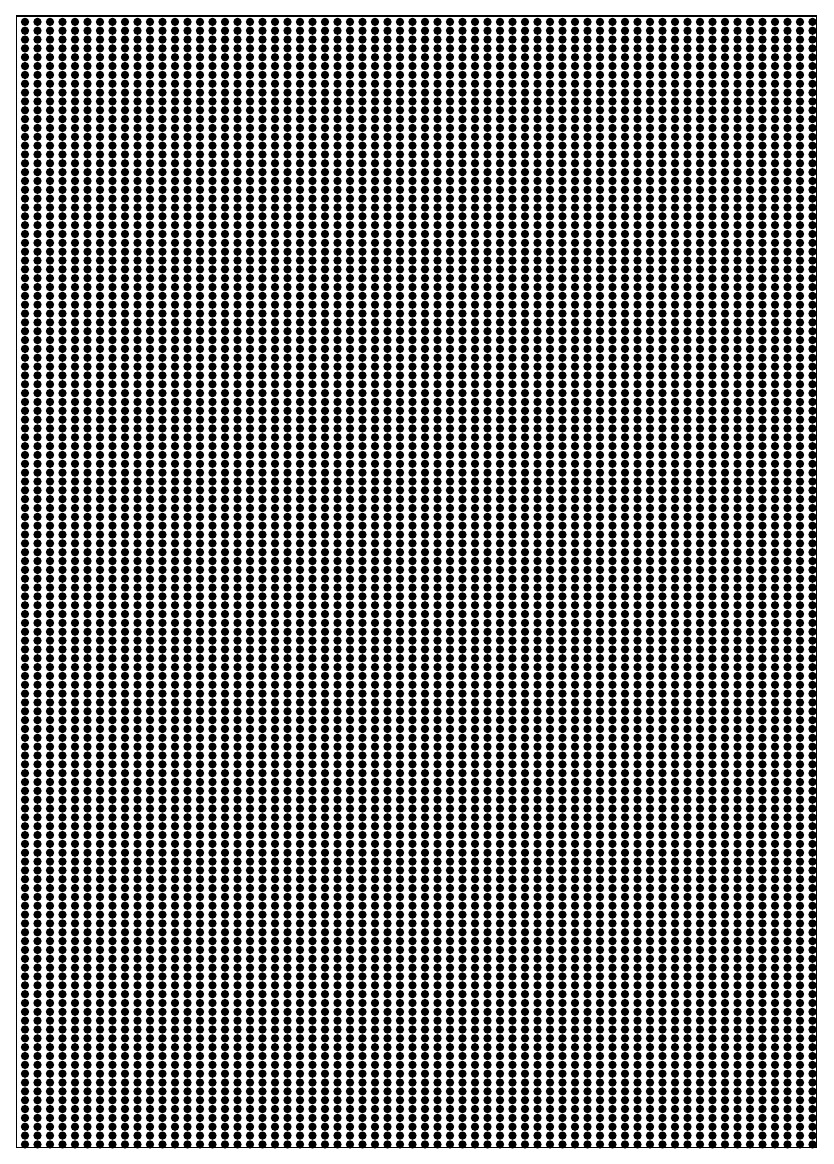}
 }
\subfloat[$n=5$]{
\includegraphics[scale=0.5]{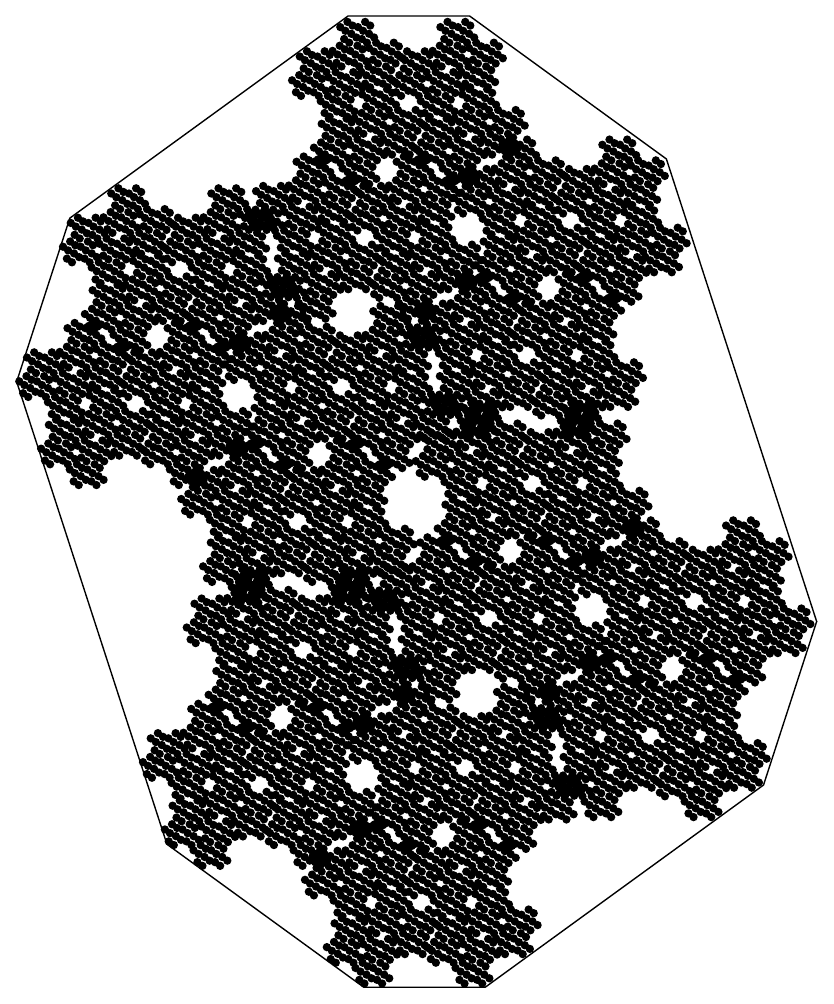}
 }\\
 \subfloat[$n=6$]{
\includegraphics[scale=0.5]{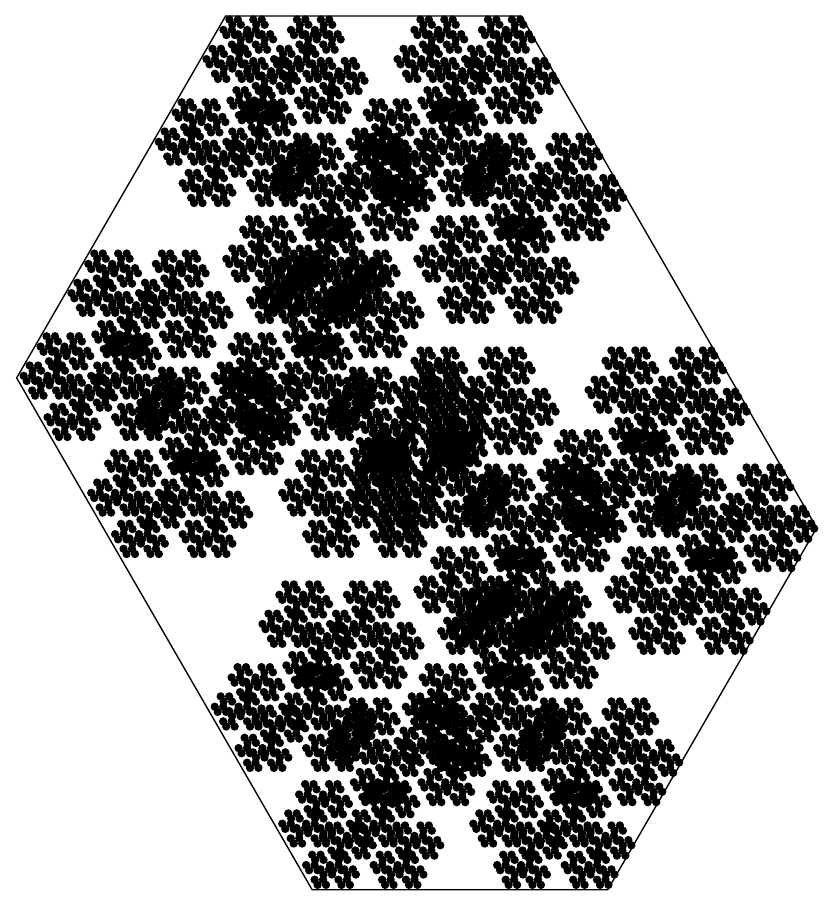}
 }
\subfloat[$n=7$]{
\includegraphics[scale=0.5]{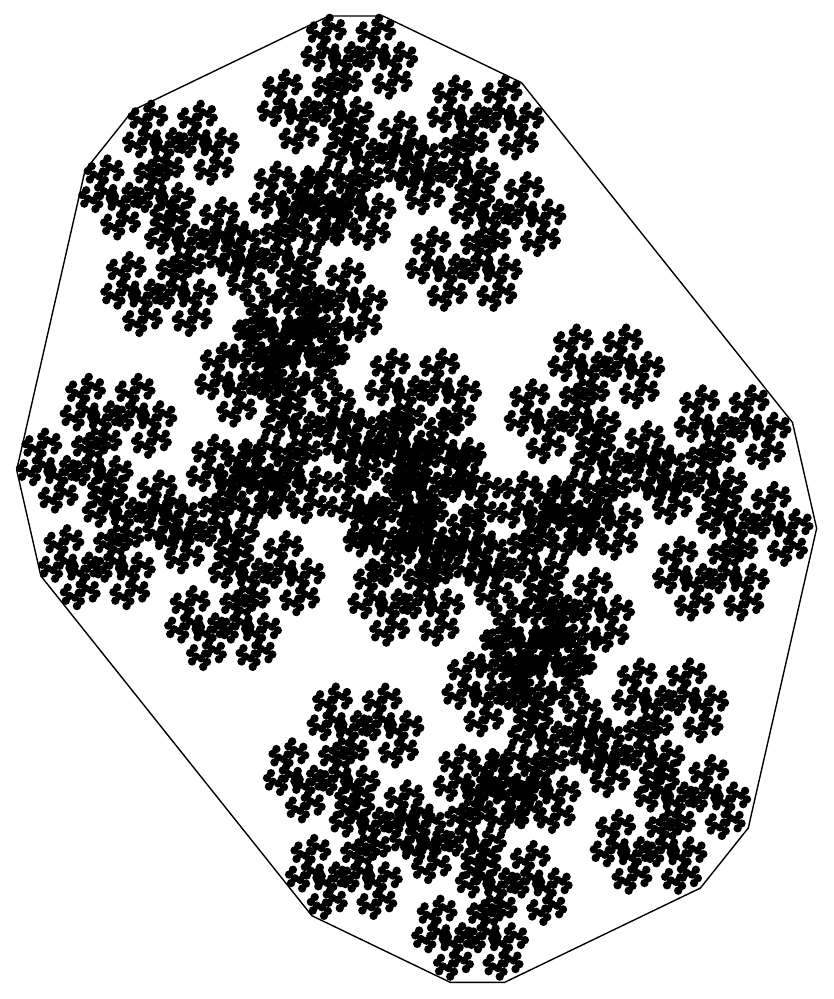}
 }
\subfloat[$n=8$ ]{
\includegraphics[scale=0.5]{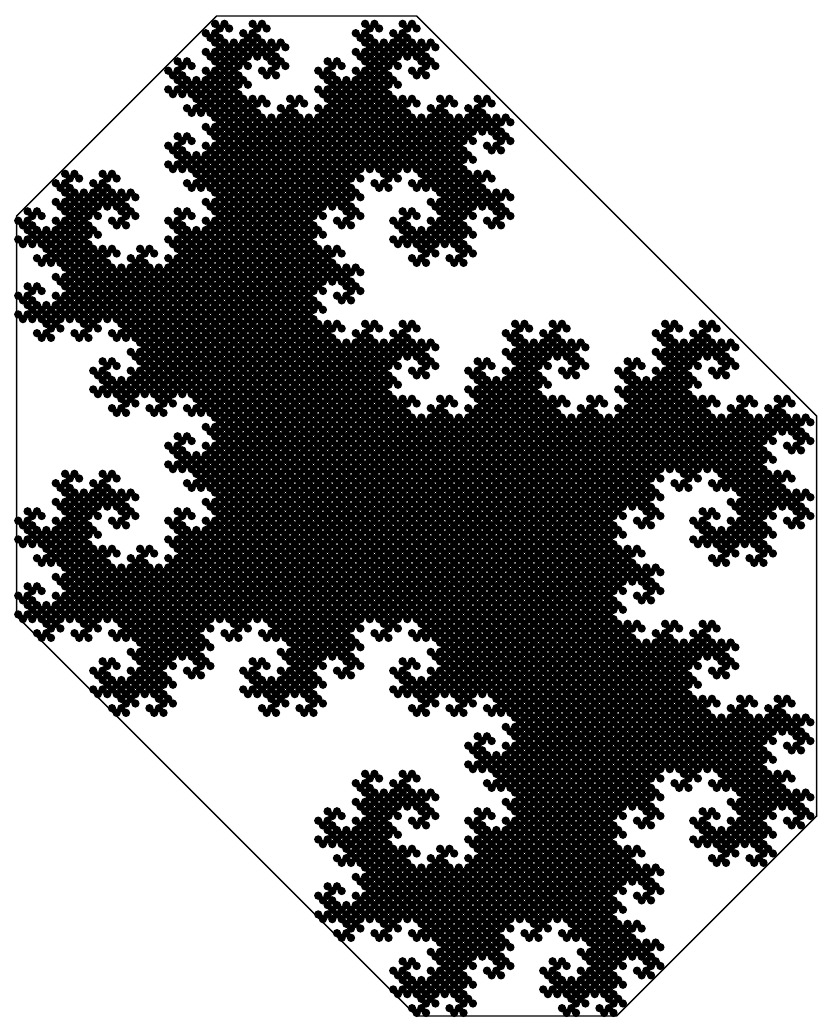}
 }
% \subfloat[Convex hull of $\Lambda_{9,2^{1/2},\{0,1\}}$ ]{
% \includegraphics[scale=0.5]{fig/complex/convexhullrepr/genconvexHull9.pdf}
%  }
\caption{The sets $\Lambda_{n,2^{1/2},\{0,1\}}$ with $n=3,\dots,8$ and their convex hulls. $\Lambda_{n,2^{1/2},\{0,1\}}$
is approximated with the set of expansions with length $14$.
Remark that $q_{8,2^{1/2}}=1+i$ is a Gaussian integer that has been studied, for instance, in \cite{Gil81}.}
% \label{fig:ch3ch4}
\end{figure}

After establishing the shape of $\conv(\Lambda_{n,p,A})$,
 we are now interested on the explicit characterization of its extremal points.
 By Proposition \ref{t convex p1}, this is equivalent to characterize the extremal points of $P_{n,p}$ and we shall focus on this problem. Let us see some examples.

\green\begin{example}\label{t charcon ex 1}\black
 Set $p>1$ and $q:=q_{3,p}=pe^{\frac{2\pi}{3}i}$. By a direct computation, the set of extremal points f $P_{3,p}$, say $\mathcal E(P_{3,p})$, is
\begin{equation*}
 \begin{split}
 \mathcal E(P_{3,p})=\{1\,,1+q\,,q\,,q+q^2\,,q^2\,,q^2+1\}.
 \end{split}
\end{equation*}
\end{example}
\vskip0.5cm
\begin{figure}\vskip0.5cm
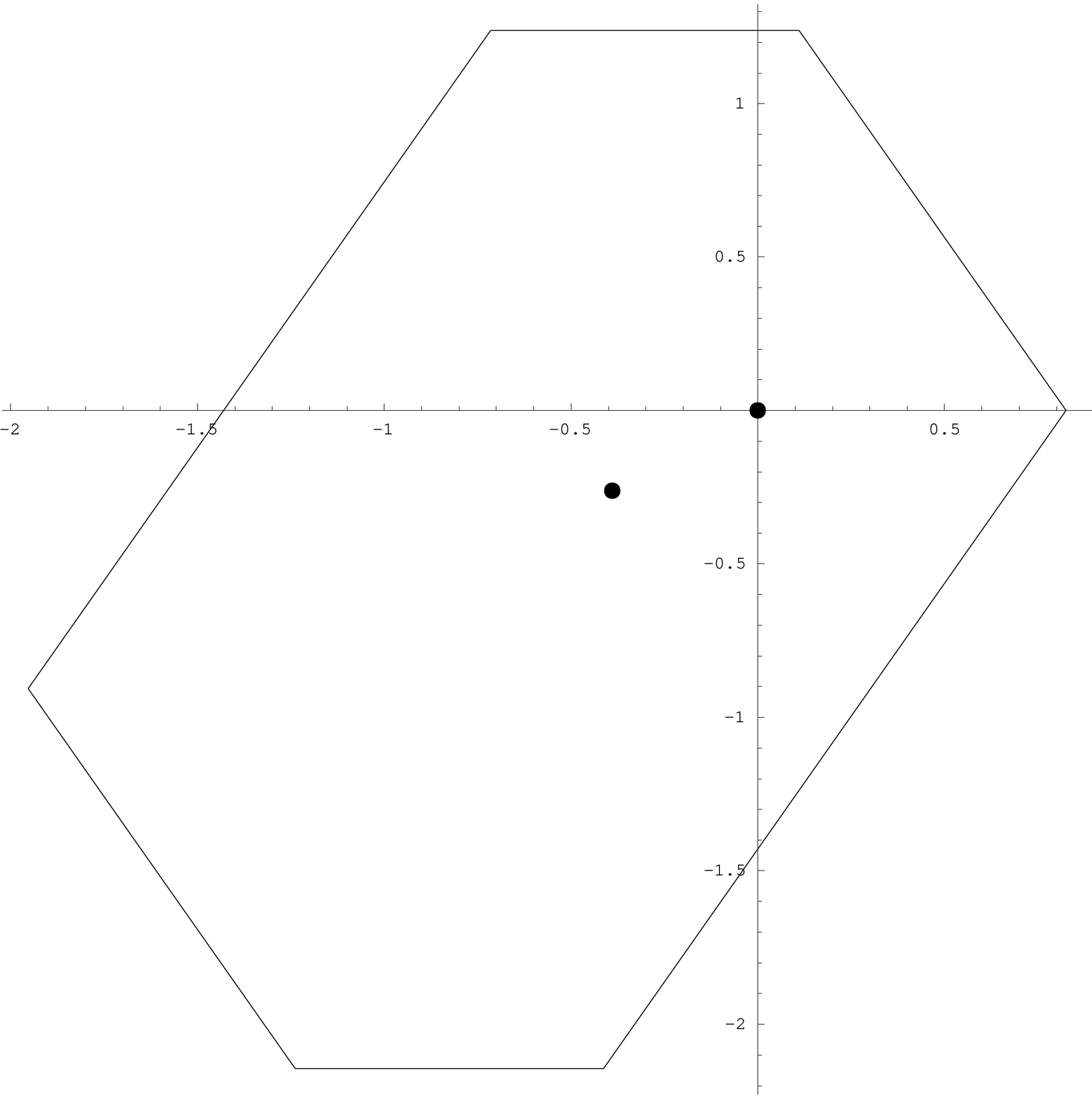
\caption{Extremal points of $P_{3,p}$ with $p=2^{1/3}$. When the alphabet is $\{0,1\}$, this coincides with the convex hull of $\Lambda_{A,n,p}$. }
\end{figure}

\begin{example}\label{t charcon ex 2}\black
 Set $p>1$ and $q:=q_{4,p}=pe^{\frac{2\pi}{4}i}$. By a direct computation, the set of extremal points f $P_{4,p}$, say $\mathcal E(P_{4,p})$, is
\begin{equation*}
 \begin{split}
 \mathcal E(P_{4,p})=\{1+q\,,q+q^2\,,q^2+q^3\,q^3+1\}.
 \end{split}
\end{equation*}
\end{example}\black

Example \ref{t charcon ex 1} and Example \ref{t charcon ex 2} suggest that the set of extremal points of $P_{n,p}$ has an internal structure. In general
  the vertices of $P_{n,p}$ are element of $X_{n,p}$ and in particular they are (finite) expansions in base $q_{n,p}$.
In order to make  the structure of the extremal points more evident, in next examples we focus on the sequences of binary coefficients associated to the extremal points.
 This point of view requires some notations.

\green\begin{notation}\black
 Let $(x_0\cdots x_{n-1})$ be a sequence in $\{0,1\}^n$ and  $q\in
 \CC$. We define
$$(x_0\cdots x_{n-1})_q:=\sum_{k=0}^{n-1}x_kq^k$$
and we introduce $\sigma$, the circular shift  on finite sequences:
$$\sigma (x_0x_1\cdots x_{n-1}):=(x_1\cdots x_{n-1}x_0).$$
The closure of $(x_0\cdots x_{n-1})$ with respect to $\sigma$ is denoted by
$$Orb(x_0\cdots x_{n-1}):=\{\sigma^h (x_0x_1\cdots x_{n-1})\mid h=0,\dots,n-1\}.$$
Finally we define
$$ Orb(x_0\cdots x_{n-1})_q:=\{\sigma^h (x_0x_1\cdots x_{n-1})_q \mid h=0,\dots,n-1\}.$$
\end{notation}\black

\green\begin{example}\label{t conchar ex 3}\black
Following relations hold for every $p>1$ and they are established by means of a symbolic computer program.

\noindent If $q=q_{3,p}$ then
\begin{align*}
\mathcal E(P_{3,p})&=\{1\,,1+q\,,q\,,q+q^2\,,q^2\,,q^2+1\}.\\
           &=\{(100)_{q},(110)_{q},(010)_{q},(011)_{q},(001)_{q},(101)_{q}\}\\
           &=Orb(100)_{q}\cup Orb(110)_{q};\\
\end{align*}
\noindent If $q=q_{4,p}$ then
\begin{align*}
\mathcal E(P_{4.p})&=\{1+q\,,q+q^2\,,q^2+q^3\,q^3+1\}.\\
           &=\{(1100)_{q},(0110)_{q},(0011)_{q},(1001)_{q}\}\\
           &=Orb(1100)_{q};
\end{align*}
\noindent If $q=q_{5,p}$ then
\begin{align*}
\mathcal E(P_{5,p})&=\{(11000)_{q},(11100)_{q},(01100)_{q},(01110)_{q},(00110)_{q},\\
          &\phantom{=\{}~(00111)_{q},(00011)_{q},(10011)_{q},(10001)_{q},(11001)_{q}\}\\
           &=Orb(1100)_{q}\cup Orb(11100)_{q};\\
\end{align*}
\noindent If $q=q_{6,p}$ then
\begin{equation*}
\begin{split}
\mathcal E(P_{6,p})&=\{(111000)_{q},(011100)_{q},(001110)_{q},\\
          &\phantom{=\{}~(000111)_{q},(100011)_{q},(110001)_{q}\}\\
           &=Orb(111000)_{q}.
\end{split}
\end{equation*}
\end{example}\black

In Example \ref{t conchar ex 3} the set of extremal points $\mathcal
E(P_{n,p})$ is shown to be intimately related to the sequences
$(1^{\lfloor n/2 \rfloor}0^{n-\lfloor n/2 \rfloor})$ and $(1^{\lceil
n/2 \rceil}0^{n-\lceil n/2 \rceil})$ when $n=3,5$ and to the
sequence $(1^{ n/2 }0^{n/2})$ when $n=4,6$. We now prove that this
is a general result. We set $q=q_{n,p}$ and for $h=1,\dots,n$ we
introduce the vertices
\begin{align}
\mathbf v_{2h-1}:=\sigma^h(1^{\lfloor n/2 \rfloor}0^{n-\lfloor n/2 \rfloor})&=\sum_{k=0}^{\lfloor n/2\rfloor-1}q^{k+h-1~\moda n}\\
                                                                          &=\sum_{k\in \underline K(h)}q^k
\end{align}
where $$\underline K(h):=\{k\in\{0,\dots,n-1\}\mid (k-h+1)~\moda n\leq \lfloor n/2\rfloor -1\};$$
\begin{align}
\mathbf v_{2h}:=\sigma^h(1^{\lceil n/2 \rceil}0^{n-\lceil n/2 \rceil})&=\sum_{k=0}^{\lceil n/2\rceil-1}q^{k-h+1~\moda n}\\
&=\sum_{k\in \overline K(h)}q^k
\end{align}
where
$$\overline K(h):=\{k\in\{0,\dots,n-1\}\mid (k-h+1)~ \moda n\leq \lceil n/2\rceil -1\}.$$

\green\begin{remark}\black
 By a direct computation, if $n$ is odd then for every $h=1,\dots,n$
\begin{align}
 \mathbf n_{2h-1}&=(-q^{h-2~\moda n})^\bot\label{n2hodd}\\
\mathbf n_{2h}&=(q^{h-2+\lceil n/2\rceil~\moda n})^\bot
\end{align}
while if $n$ is even then $\mathbf v_{2h}=\mathbf v_{2h-1}$ hence
\begin{align}
 \mathbf n_{2h-1}&=\left(1-\frac{1}{p^{n/2}}\right)(-q^{h-2~\moda n})^\bot\label{n2heven}\\
\mathbf n_{2h}&=0;
\end{align}
In view of (\ref{n2hodd}) and of (\ref{n2heven}), for every $n$ and every $h$
\begin{equation}\label{cos}
 \arg \mathbf n_{2h-1}=\left(\frac{(h-2)~\moda n}{n}2\pi+\frac{\pi}{2}\right)~ \moda 2\pi
\end{equation}
\end{remark}\black

\green\begin{lemma}\label{lnorm}\black
 For every $h=1,\dots,n$
\begin{equation}\label{Kh1}
 k\in \underline K(h) \text{ if and only if } q^k\cdot \mathbf n_{2h-1}\geq 0;
\end{equation}
\begin{equation}\label{Kh2}
 k\in \overline K(h) \text{ if and only if } q^k\cdot \mathbf n_{2h}\geq 0;
\end{equation}
\end{lemma}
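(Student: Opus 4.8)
The plan rests entirely on rewriting the scalar products $q^{k}\cdot\mathbf n_{2h-1}$ and $q^{k}\cdot\mathbf n_{2h}$ by means of the definition of the scalar product on $\CC$ together with the closed forms for $\arg\mathbf n_{2h-1}$ and $\arg\mathbf n_{2h}$ recorded in (\ref{n2hodd})--(\ref{cos}) and the obvious identity $\arg q^{k}=\frac{2\pi k}{n}$. The three steps are: (i) reduce each equivalence to the sign of a single sine of a linear function of $k$; (ii) translate the resulting angular inequality into the $\moda n$ condition defining $\underline K(h)$, respectively $\overline K(h)$; (iii) dispose of the case $n$ even, where $\mathbf n_{2h}=0$ and $\mathbf n_{2h-1}$ only picks up a positive scalar factor.

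For (\ref{Kh1}), note first that for $p>1$ both $q^{k}\neq 0$ and $\mathbf n_{2h-1}\neq 0$, so by the definition of the scalar product (cf.\ Remark \ref{product}) the quantity $q^{k}\cdot\mathbf n_{2h-1}$ has the same sign as $\cos\bigl(\arg q^{k}-\arg\mathbf n_{2h-1}\bigr)$. Using (\ref{cos}), i.e.\ $\arg\mathbf n_{2h-1}\equiv\frac{2\pi(h-2)}{n}+\frac{\pi}{2}\pmod{2\pi}$, and $\cos(\theta-\tfrac{\pi}{2})=\sin\theta$, one obtains
$$\cos\bigl(\arg q^{k}-\arg\mathbf n_{2h-1}\bigr)=\sin\!\Bigl(\tfrac{2\pi}{n}(k-h+2)\Bigr),$$
which is nonnegative precisely when $\tfrac{2\pi}{n}(k-h+2)\in[0,\pi]\pmod{2\pi}$, that is, when $(k-h+2)\moda n\le\lfloor n/2\rfloor$. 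Setting $j:=(k-h+1)\moda n$, so that $(k-h+2)\moda n=(j+1)\moda n$, this is equivalent --- away from the single residue $(k-h+2)\moda n=0$ --- to $j\le\lfloor n/2\rfloor-1$, which is exactly the definition of $k\in\underline K(h)$. The equivalence (\ref{Kh2}) follows from the same computation, now starting from the closed form $\arg\mathbf n_{2h}\equiv\frac{2\pi(h-2+\lceil n/2\rceil)}{n}-\frac{\pi}{2}\pmod{2\pi}$ coming from (\ref{n2hodd}) and the identity $\arg\mathbf t^{\bot}\equiv\arg\mathbf t-\tfrac{\pi}{2}\pmod{2\pi}$: here $\cos(\theta+\tfrac{\pi}{2})=-\sin\theta$ shows that $q^{k}\cdot\mathbf n_{2h}$ has the sign of $\sin\!\bigl(\tfrac{2\pi}{n}(\lceil n/2\rceil+h-2-k)\bigr)$, and the same floor/ceiling bookkeeping (with $j$ as above) shows this is nonnegative exactly when $j\le\lceil n/2\rceil-1$, i.e.\ $k\in\overline K(h)$.

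Finally, when $n$ is even $\lfloor n/2\rfloor=\lceil n/2\rceil=n/2$, hence $\underline K(h)=\overline K(h)$; by (\ref{n2heven}) the vector $\mathbf n_{2h-1}$ is $(1-p^{-n/2})>0$ times $(-q^{h-2\moda n})^{\bot}$, which is the vector already handled, so its scalar product with $q^{k}$ keeps the same sign and (\ref{Kh1}) goes through verbatim, while $\mathbf n_{2h}=0$ makes (\ref{Kh2}) collapse onto the same statement. The one delicate point throughout --- and the step I would treat most carefully --- is exactly the boundary case in which the cosine above vanishes, i.e.\ $q^{k}$ is parallel to the edge of $P_{n,p}$ carrying the normal in question: there the non-strict inequality ``$\ge 0$'' has to be matched, residue by residue (namely $(k-h+2)\moda n\in\{0\}$ for $\underline K(h)$ and $(k-h+1)\moda n=\lceil n/2\rceil-1$ for $\overline K(h)$), against membership in the index set, which is settled by inspecting the explicit vertices $\mathbf v_{2h-1}$, $\mathbf v_{2h}$. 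Everything else is mechanical modular arithmetic.
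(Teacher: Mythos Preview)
Your approach is essentially the same as the paper's: both reduce $q^{k}\cdot\mathbf n_{2h-1}\ge 0$ to a cosine (or, via your identity $\cos(\theta-\tfrac{\pi}{2})=\sin\theta$, a sine) inequality using the explicit formula (\ref{cos}) for $\arg\mathbf n_{2h-1}$, and then translate the resulting angular bound into the $\moda n$ condition defining $\underline K(h)$ (and analogously for $\overline K(h)$ via the argument of $\mathbf n_{2h}$). The paper's proof shares the same boundary delicacy you flag at the residue where the cosine vanishes, and like you disposes of the even-$n$ case for (\ref{Kh2}) by observing that $\mathbf n_{2h}=0$.
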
\black

\begin{proof}
For every $k=0,\dots,n-1$
\begin{equation}
  q^k\cdot \mathbf n_{2h-1}\geq 0
\end{equation}
if and only if
\begin{equation}\label{ineqcos1}
 \cos\left(\arg q^k-\arg \mathbf n_{2h-1}\right)=\cos\left(\frac{(k-h+1)~\moda n+1}{n}2\pi -\frac{\pi}{2}\right)\geq 0
\end{equation}
As $(k-h+1)~\moda n\in\{0,\dots,n-1\}$,
$$-\frac{\pi}{2}<\frac{(k-h+1)~\moda n+1}{n}2\pi -\frac{\pi}{2}\leq \frac{3\pi}{2},$$
 (\ref{ineqcos1}) reduces to
$$-\frac{\pi}{2}<\frac{(k-h+1)~\moda n+1}{n}2\pi -\frac{\pi}{2}\leq \frac{\pi}{2}$$
Hence (\ref{Kh1}) follows by the definition of $k\in \underline K(h)$ and by recalling that $(k-h+1)~\moda n\in\NN$.

To prove (\ref{Kh2}), we first remark that if $n$ is even then $\mathbf n_{2h}=0$ and (\ref{Kh2}) is immediate. If otherwise $n$ is odd then
$$\arg \mathbf n_{2h}=\left(\frac{(h-2+\lceil n/2\rceil)~\moda n}{n}2\pi + \frac{3\pi}{2}\right)~\moda 2\pi$$
therefore
\begin{equation}
  q^k\cdot \mathbf n_{2h-1}\geq 0
\end{equation}
if and only if
\begin{equation}\label{ineqcos2}
\cos\left(\frac{(k-h+1)~\moda n+1-\lceil n/2\rceil}{n}2\pi +\frac{\pi}{2}\right)\geq 0.
\end{equation}
As $(k-h+1)~\moda n\in\{0,\dots,n-1\}$, for every $k=0,\dots,n-1$
$$-\frac{\pi}{2}<\frac{(k-h+1)~\moda n+1}{n}2\pi -\frac{\pi}{2}\leq \frac{3\pi}{2},$$
 (\ref{ineqcos1}) reduces to
$$-\frac{\pi}{2}<\frac{(k-h+1)~\moda n+1}{n}2\pi -\frac{\pi}{2}\leq \frac{\pi}{2};$$
Relation (\ref{Kh2}) hence follows by the definition of $\overline K(h)$.
\end{proof}

\green\begin{lemma}\label{lco}\black
 For every $x\in X_{n,p}$ and for every $h=1,\dots,n$
\begin{equation}\label{n10}
 (x-\mathbf v_{2h-1})\cdot \mathbf n_{2h-1}\leq 0
\end{equation}
and
\begin{equation}\label{n20}
 (x-\mathbf v_{2h})\cdot \mathbf n_{2h}\leq 0.
\end{equation}
\end{lemma}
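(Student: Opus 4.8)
The plan is to reduce the claimed inequalities to the characterization of $\underline K(h)$ and $\overline K(h)$ provided by Lemma~\ref{lnorm}, which translates the sign of $q^k\cdot \mathbf n_{2h-1}$ (resp. $q^k\cdot \mathbf n_{2h}$) into membership in $\underline K(h)$ (resp. $\overline K(h)$). First I would expand a generic $x\in X_{n,p}$ as $x=\sum_{k=0}^{n-1}x_k q^k$ with $x_k\in\{0,1\}$, and write $\mathbf v_{2h-1}=\sum_{k\in\underline K(h)}q^k$, so that
\begin{equation*}
(x-\mathbf v_{2h-1})\cdot \mathbf n_{2h-1}=\sum_{k=0}^{n-1}x_k\,(q^k\cdot \mathbf n_{2h-1})-\sum_{k\in\underline K(h)}(q^k\cdot \mathbf n_{2h-1}),
\end{equation*}
using bilinearity of the scalar product. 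The same expansion works for $\mathbf v_{2h}$ with $\underline K(h)$ replaced by $\overline K(h)$ and $\mathbf n_{2h-1}$ by $\mathbf n_{2h}$.

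The key observation is that for each index $k$ we should compare the contribution $x_k\,(q^k\cdot \mathbf n_{2h-1})$ with the contribution $\mathbf 1_{\{k\in\underline K(h)\}}\,(q^k\cdot \mathbf n_{2h-1})$ termwise. If $k\in\underline K(h)$, then by Lemma~\ref{lnorm} we have $q^k\cdot \mathbf n_{2h-1}\geq 0$, and since $x_k\leq 1$ the difference $x_k\,(q^k\cdot\mathbf n_{2h-1})-(q^k\cdot\mathbf n_{2h-1})=(x_k-1)(q^k\cdot\mathbf n_{2h-1})\leq 0$. If $k\notin\underline K(h)$, then again by Lemma~\ref{lnorm} we have $q^k\cdot \mathbf n_{2h-1}\leq 0$ (the negation of $\geq 0$ being $\leq 0$ here, since equality is allowed on both sides of the iff), and since $x_k\geq 0$ the contribution $x_k\,(q^k\cdot\mathbf n_{2h-1})\leq 0$ while the $\mathbf v_{2h-1}$-term contributes nothing. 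Summing these termwise non-positive quantities over $k=0,\dots,n-1$ yields \eqref{n10}. The argument for \eqref{n20} is identical, substituting $\overline K(h)$ for $\underline K(h)$ and $\mathbf n_{2h}$ for $\mathbf n_{2h-1}$, and invoking \eqref{Kh2} in place of \eqref{Kh1}; note that when $n$ is even $\mathbf n_{2h}=0$ and \eqref{n20} is trivial.

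The only delicate point is the handling of the boundary case where $q^k\cdot\mathbf n_{2h-1}=0$, i.e.\ $q^k$ is parallel to the edge with normal $\mathbf n_{2h-1}$: there Lemma~\ref{lnorm} places $k$ in $\underline K(h)$ (the condition is stated with $\geq 0$), but the term vanishes regardless of whether we subtract it or not, so no sign issue arises. Thus the main obstacle is merely bookkeeping — making sure the index sets $\underline K(h)$ and $\overline K(h)$ are exactly the sets on which the respective scalar products are non-negative, which is precisely the content of Lemma~\ref{lnorm} — rather than any genuine analytic difficulty. I expect the proof to be short once the termwise decomposition above is written out.
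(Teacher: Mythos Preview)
Your proposal is correct and follows essentially the same approach as the paper: expand $x=\sum_k x_k q^k$, split the sum according to membership in $\underline K(h)$ (resp.\ $\overline K(h)$), and use Lemma~\ref{lnorm} together with $x_k\in\{0,1\}$ to see that every term is non-positive. The paper writes the decomposition in one line as $\sum_{k\in\underline K(h)}(x_k-1)(q^k\cdot\mathbf n_{2h-1})+\sum_{k\notin\underline K(h)}x_k(q^k\cdot\mathbf n_{2h-1})$, which is exactly your termwise comparison; your remark on the trivial even-$n$ case for $\mathbf n_{2h}=0$ is also in the spirit of the paper's setup.
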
\black
\begin{proof}
Let
$$x=\displaystyle{\sum_{k=0}^{n-1}x_kq^k\in X_{n,p}}$$
 with $x_0,\dots,x_{n-1}\in \{0,1\}$. Then for every $h=1,\dots, n$
\begin{align*}
 (x-\mathbf v_{2h-1})\cdot \mathbf n_{2h-1}&=\sum_{k\in \underline K(h)} (x_k-1)(q^k\cdot \mathbf n_{2h-1})+\sum_{k\not\in \underline K(h)}x_k(q^k\cdot \mathbf n_{2h-1}) \leq 0
\end{align*}
indeed Lemma \ref{lnorm} and $x_k\in\{0,1\}$ imply that all the terms of the above sums are non-positive. Similarly, again by Lemma \ref{lnorm} and by $x_k\in\{0,1\}$ we may deduce
\begin{align*}
 (x-\mathbf v_{2h-1})\cdot \mathbf n_{2h-1}&=\sum_{k\in \overline K(h)} (x_k-1)(q^k\cdot \mathbf n_{2h-1})+\sum_{k\not\in \overline K(h)}x_k(q^k\cdot \mathbf n_{2h-1}) \leq 0.
\end{align*}
\end{proof}

\yellow\begin{remark}\label{rmkP}\black
Proposition \ref{proconv} and Lemma \ref{lco} imply that $X_{n,p}$ is a subset of the polygon whose vertices are $\mathbf v_1,\dots \mathbf v_{2n}$, therefore
$$P_{n,p}=\conv (X_{n,p})\subseteq \conv\{\mathbf v_h\mid h=1,\dots,n\}.$$
This, together with the fact that $\mathbf v_h\in X_{n,p}$ for every $h=1,\dots,2n$, implies
\begin{equation}\label{pcon}
 P_{n,p}=\conv(\{\mathbf v_h\mid h=1,\dots,2n\}).
\end{equation}
In particular when $n$ is odd $\mathbf v_1,\dots,\mathbf v_{2n}$ coincide with the $2n$ vertices of $P_{n,p}$.
 When $n$ is even, $\mathbf v_1=\mathbf v_2,\dots,\mathbf v_{2n-1}=\mathbf v_{2n}$ are
the $n$ vertices of $P_{n,p}$.
\end{remark}

\begin{theorem}\black\label{t conchar}
 For every  $n\geq 1$, $p>1$ and for every finite alphabet $A$
\begin{equation}\label{key1}
 \text{\normalfont conv}(\Lambda_{n,p,A})=\frac{\max A-\min A}{p^n-1}\conv(\{\mathbf v_h\mid h=1,\dots,2n\})+\sum_{k=0}^{n-1}\min A~q^k.
\end{equation}
Moreover if $n$ is even then
\begin{equation}\label{key2}
 \text{\normalfont conv}(\Lambda_{n,p,A})=\frac{\max A-\min A}{p^n-1}\conv(\{\mathbf v_{2h}\mid h=1,\dots,n\})+\sum_{k=0}^{n-1}\min A~q^k.
\end{equation}
\end{theorem}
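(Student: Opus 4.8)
The plan is to simply assemble the pieces that have already been proved: Proposition~\ref{t convex p1} expresses $\text{\normalfont conv}(\Lambda_{n,p,A})$ as an affine image of the polygon $P_{n,p}=\text{\normalfont conv}(X_{n,p})$, and Remark~\ref{rmkP} identifies the vertices of $P_{n,p}$ with the points $\mathbf v_1,\dots,\mathbf v_{2n}$. So the argument is essentially a substitution, plus a parity case distinction for the second formula.

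First I would recall Proposition~\ref{t convex p1}, which gives
$$\text{\normalfont conv}(\Lambda_{n,p,A})=\frac{\max A-\min A}{p^n-1}\,P_{n,p}+\frac{1}{p^n-1}\sum_{k=0}^{n-1}\min A\,q^k ,$$
i.e. $\text{\normalfont conv}(\Lambda_{n,p,A})$ is obtained from $P_{n,p}$ by a homothety of ratio $(\max A-\min A)/(p^n-1)$ followed by a translation. Then I would invoke equation~(\ref{pcon}) of Remark~\ref{rmkP}, namely $P_{n,p}=\text{\normalfont conv}(\{\mathbf v_h\mid h=1,\dots,2n\})$; recall that this identity rests on Lemma~\ref{lco} (each $\mathbf v_h$ is a supporting vertex, so $X_{n,p}$ lies inside the polygon they span, whence $P_{n,p}\subseteq\text{\normalfont conv}(\{\mathbf v_h\})$) together with $\mathbf v_h\in X_{n,p}$ for every $h$ (which gives the reverse inclusion). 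Substituting $P_{n,p}=\text{\normalfont conv}(\{\mathbf v_h\mid h=1,\dots,2n\})$ into the displayed formula yields (\ref{key1}) at once, since convex hull commutes with the affine map $z\mapsto \frac{\max A-\min A}{p^n-1}z+\frac{1}{p^n-1}\sum_{k=0}^{n-1}\min A\,q^k$.

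For (\ref{key2}), I would use the even-$n$ part of Remark~\ref{rmkP}: when $n$ is even one has $\lfloor n/2\rfloor=\lceil n/2\rceil=n/2$, so $\underline K(h)=\overline K(h)$ and hence $\mathbf v_{2h-1}=\mathbf v_{2h}$ for every $h=1,\dots,n$. Consequently the two finite sets $\{\mathbf v_h\mid h=1,\dots,2n\}$ and $\{\mathbf v_{2h}\mid h=1,\dots,n\}$ coincide, so their convex hulls coincide, and (\ref{key2}) follows from (\ref{key1}) by replacing $\text{\normalfont conv}(\{\mathbf v_h\mid h=1,\dots,2n\})$ with $\text{\normalfont conv}(\{\mathbf v_{2h}\mid h=1,\dots,n\})$.

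Since all the geometric and combinatorial content has been isolated in Proposition~\ref{t convex p1}, Lemma~\ref{lco}, and Remark~\ref{rmkP}, there is no real obstacle here; the only points requiring a little care are the bookkeeping of the scalar factor $\tfrac{1}{p^n-1}$ in the translation term and the correct handling of the cyclic index convention $\mathbf v_{h+s}=\mathbf v_{(h+s)\bmod 2n}$ when the $2n$ vertices are listed, together with the remark that $\underline K(h)=\overline K(h)$ in the even case.
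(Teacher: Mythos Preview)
Your proposal is correct and follows exactly the same approach as the paper: invoke Remark~\ref{rmkP} (equation~(\ref{pcon})) to replace $P_{n,p}$ by $\conv(\{\mathbf v_h\mid h=1,\dots,2n\})$ in the formula of Proposition~\ref{t convex p1}, and then use $\mathbf v_{2h-1}=\mathbf v_{2h}$ for even $n$ to deduce~(\ref{key2}). The paper's proof is a two-line version of what you wrote, and your remark about the $\tfrac{1}{p^n-1}$ bookkeeping in the translation term is well taken, since the displayed formula in the theorem statement omits that factor relative to Proposition~\ref{t convex p1}.
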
\black

\begin{proof}
 In view of Remark \ref{rmkP},  (\ref{key1}) immediately follows by Proposition \ref{t convex p1}. while (\ref{key2})
 holds because if $n$ is even then $\mathbf v_{2h}=\mathbf v_{2h-1}$ for every $h=1,\dots,n$.
\end{proof}

\section{Representability in complex base}\label{s complex repr}
In this section we give a characterization of the convexity of $\Lambda_{n,p,A}$.
To this end we recall some basic elements of the Iterated Function System (IFS) theory.
An IFS $\mathcal F$ is a finite set of contractive maps over a metric space $\mathbb X$, in particular if $\mathbb X=\CC$ then
$$\mathcal F=\{f_i:\CC\to\CC|i=1,\dots, m\}$$
 for some $m\in \NN$  and for every $x,y\in\CC$ and $i=1,\dots,m$
  $$|f_i(x)-f_i(y)|\leq c_i |x-y|$$
for some $0<c_i<1$. The \emph{Hutchinson operator} acts on the power set of $\CC$ as follows
$$\mathcal F (X):=\bigcup_{i=1}^m f_i(X)=\bigcup_{i=1}^m\bigcup_{x\in X}f_i(x).$$
By the contraction principle, every IFS has a unique fixed point $R=\mathcal F(R)$ that can be constructed starting by any subset of $\CC$, indeed for every $X\subset \CC$
$$R=\lim_{k\to \infty} \mathcal F^k(X).$$
 \green \begin{example}\black
If $q\in \CC$ and if $|q|>1$ then
$$\mathcal F=\left\{f_i:x\mapsto \frac{1}{q}(x+a_i)\mid a_i\in \RR;~ i=1,\dots,m\right\}$$
is an IFS.
\end{example}\black
We define
\begin{equation}
\mathcal F_{q,A}:=\left\{f_i:x\mapsto \frac{1}{q}(x+a_i)\mid a_i\in A\right\}.
\end{equation}

\begin{figure}[t]
\centering
%  \subfloat[]{\includegraphics[scale=0.2]{fig/complex/ifs3/01-125-kstart.pdf}}\hskip0.4cm
 \subfloat[]{\includegraphics[scale=0.2]{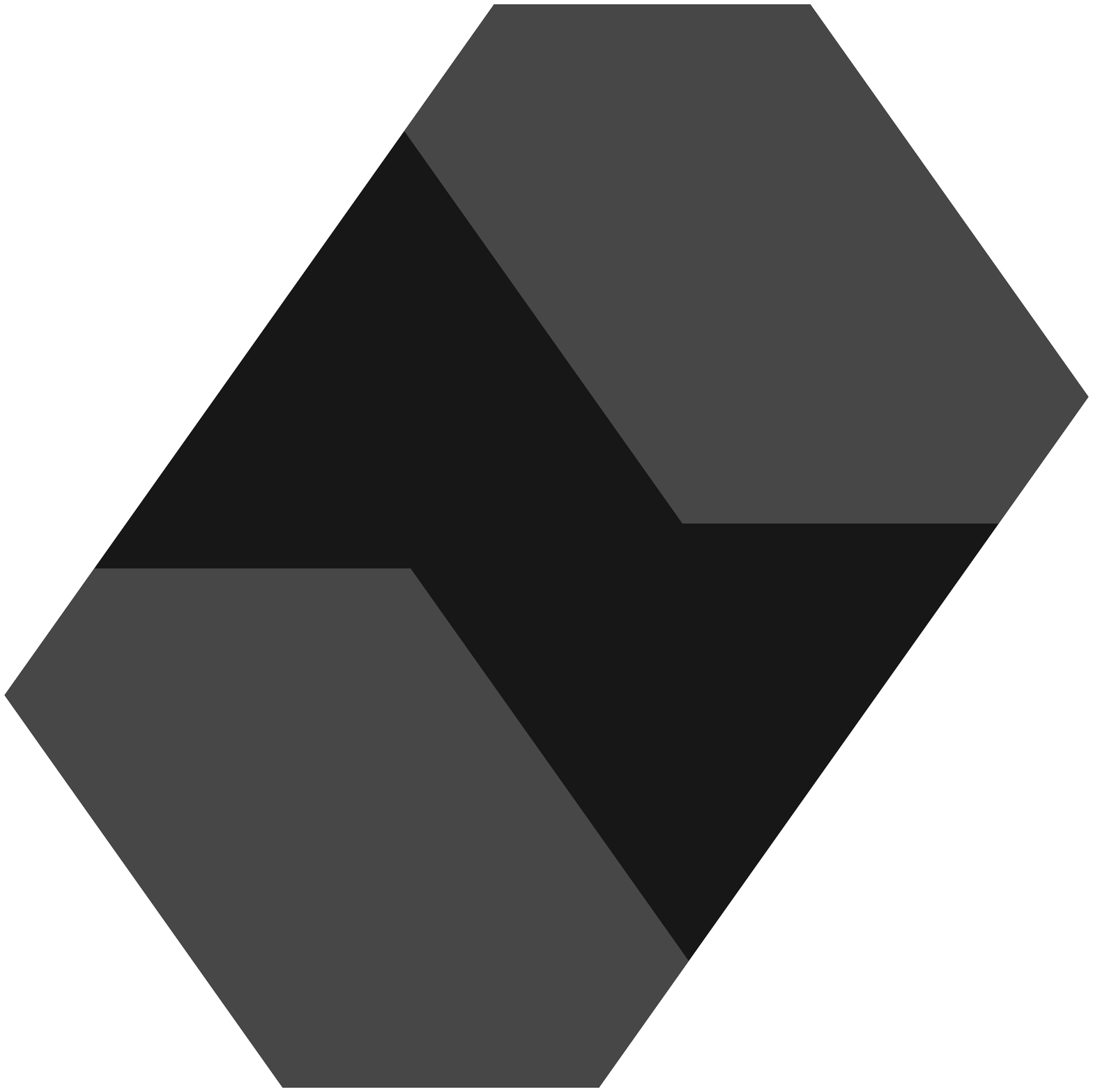}}
 \subfloat[]{\includegraphics[scale=0.2]{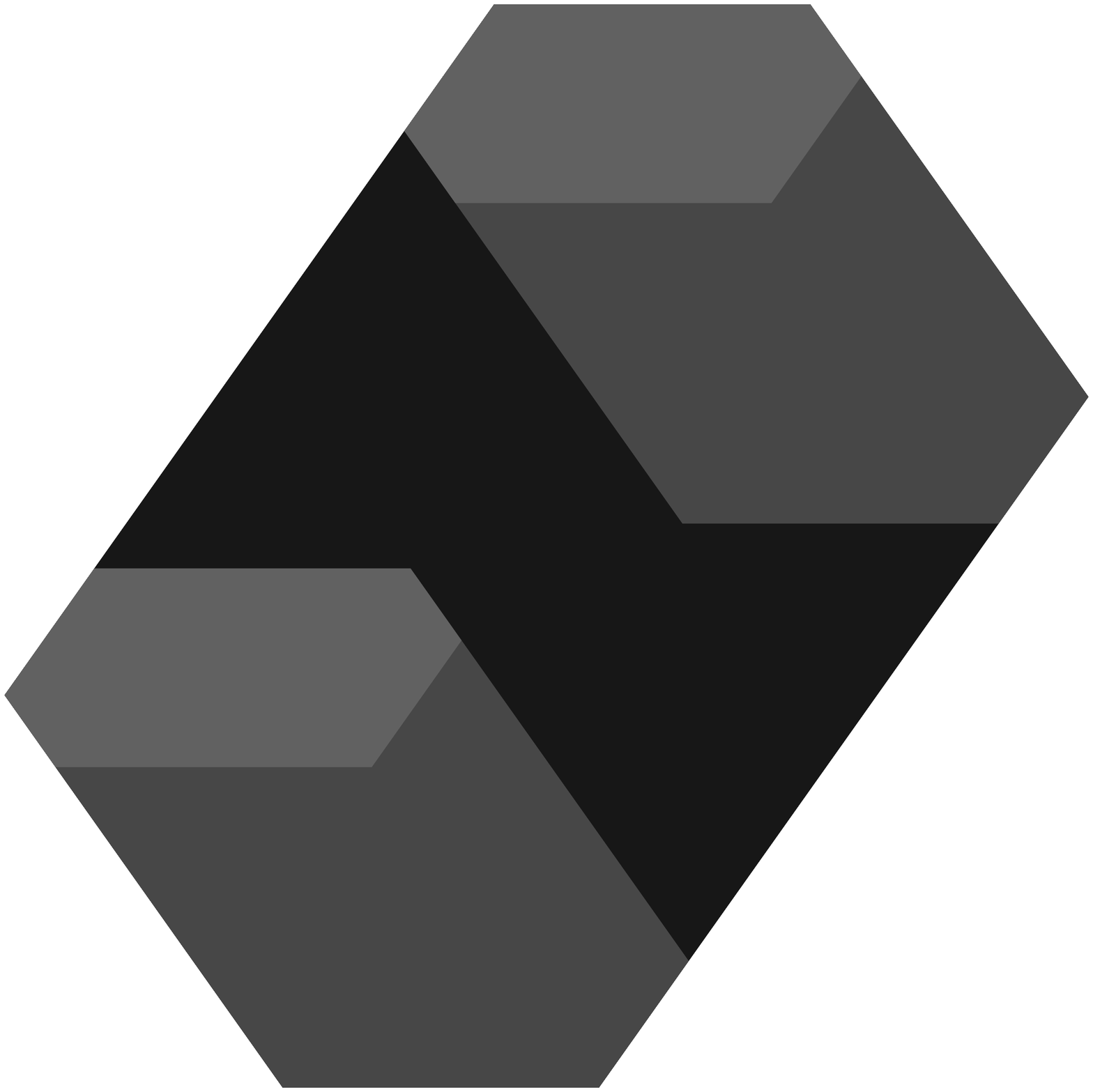}}
 \subfloat[]{\includegraphics[scale=0.2]{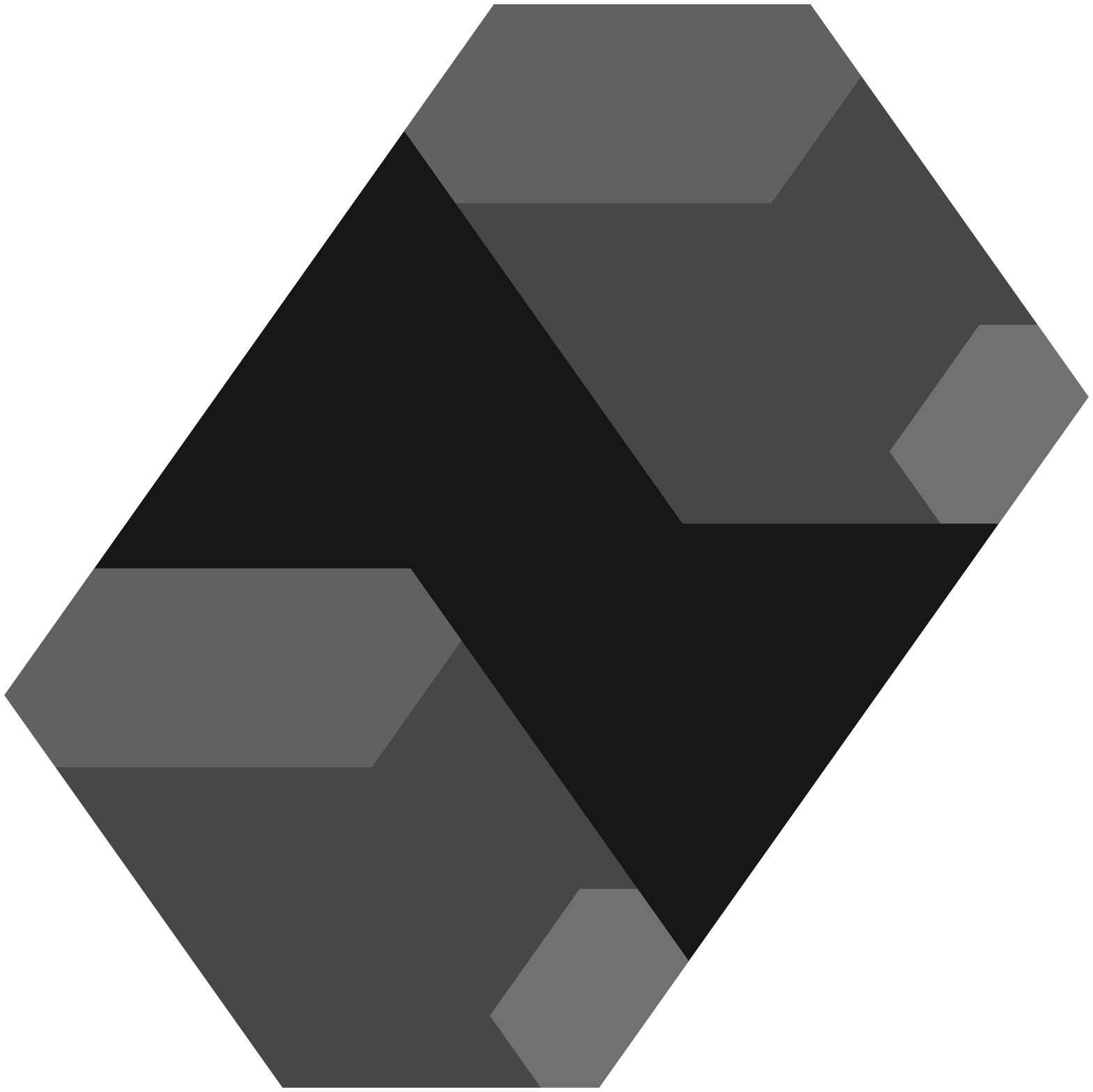}}\\
 \subfloat[]{\includegraphics[scale=0.2]{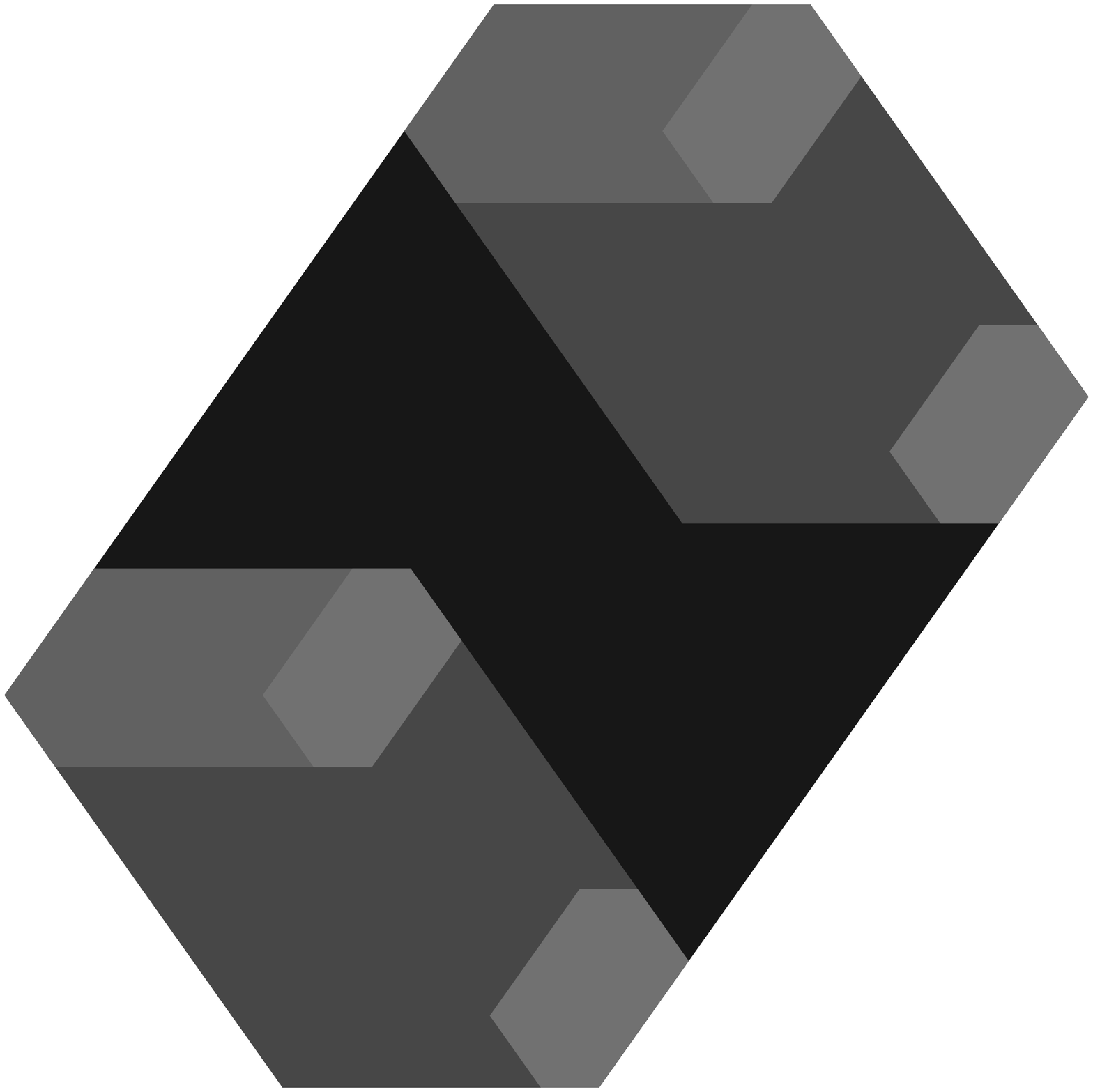}}
 \subfloat[]{\includegraphics[scale=0.2]{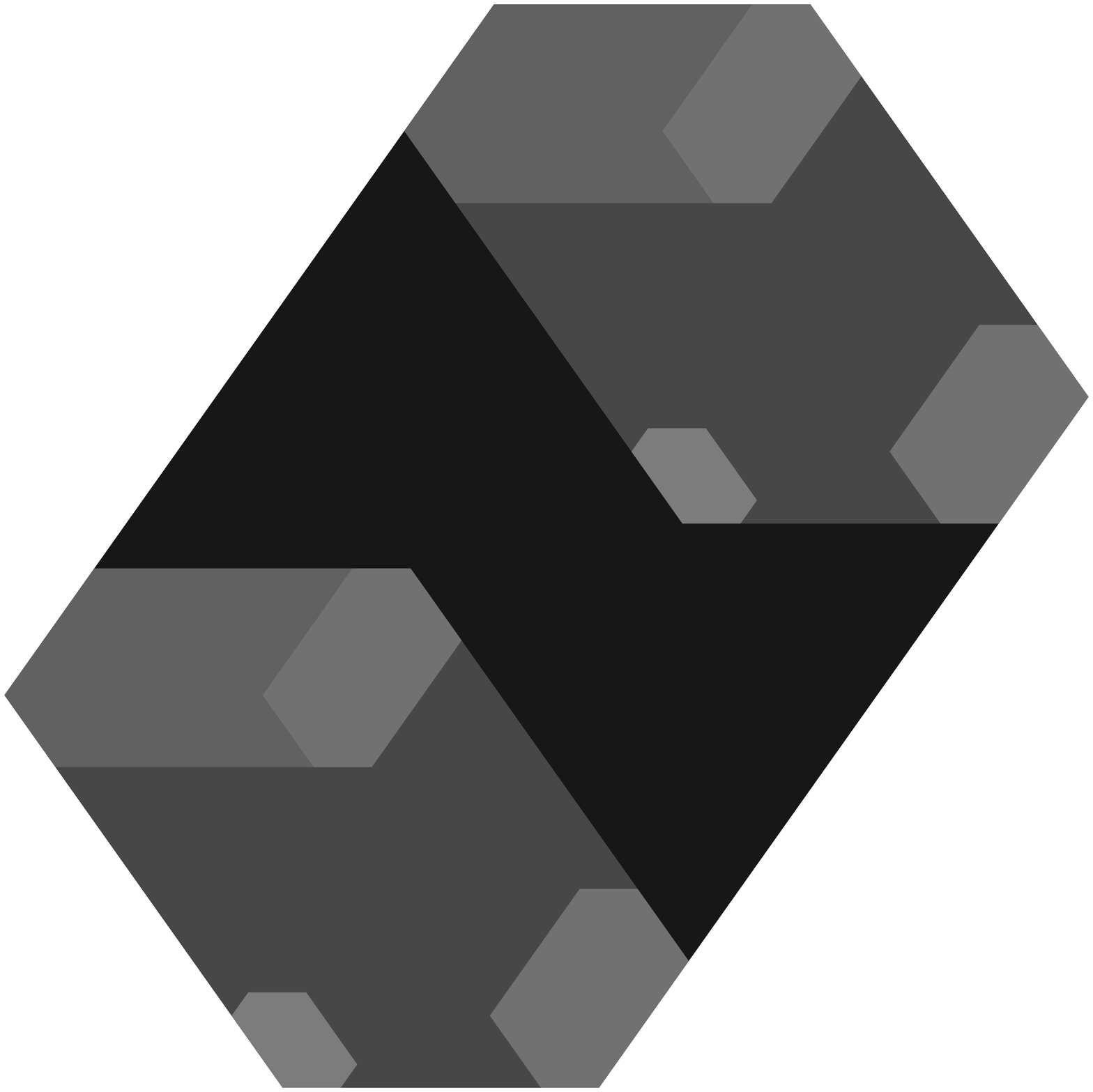}}
 \subfloat[]{\includegraphics[scale=0.2]{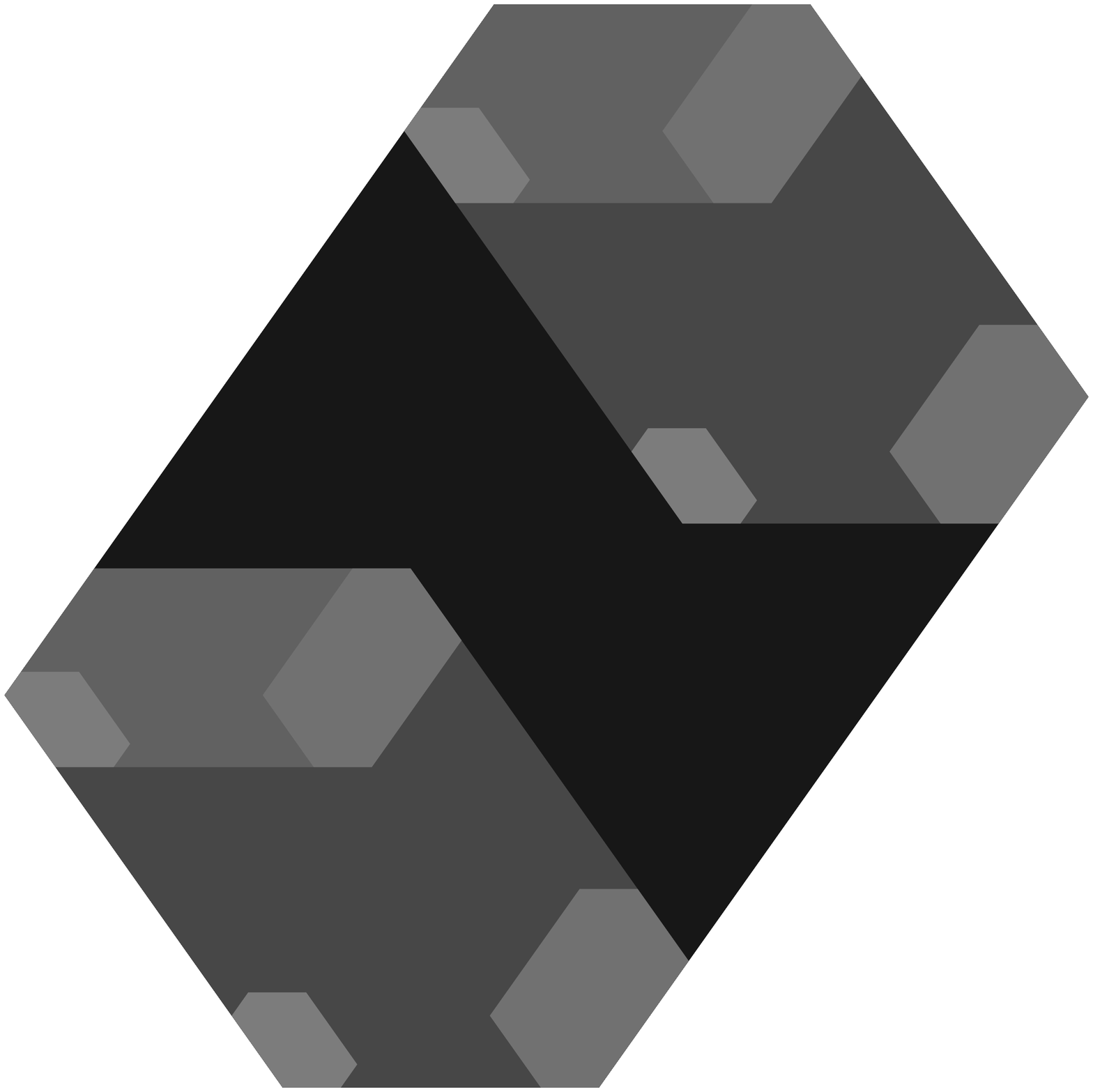}}
\caption{\label{f ifs3}First $6$ iterations of $F_{\{0,1\}}=\{x/q_{3,2}, 1/q_{3,2}(x+1)\}$ over $\text{\normalfont conv}(\Lambda_{3,2,\{0,1\}})$.}
\end{figure}

\green\begin{lemma}\black
Let $q\in \CC$, $|q|>1$ and $A=\{a_1,\dots,a_m\}\subset \RR$. Then the fixed point of $\mathcal F_{q,A}$
is
$$\displaystyle{\Lambda_{q,A}:=\left\{\sum_{k=1}^\infty \frac{x_k}{q^k}\mid x_k\in A\right\}}.$$
\end{lemma}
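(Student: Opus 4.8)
The plan is to verify the two properties defining the fixed point of an IFS: that $\Lambda_{q,A}$ is invariant under the Hutchinson operator $\mathcal F_{q,A}$, and that it is a closed (hence complete) nonempty set, so that by the contraction principle it must coincide with the unique fixed point. First I would note that $\Lambda_{q,A}$ is nonempty since $A\neq\emptyset$, and that it is bounded: every $x=\sum_{k\ge1}x_k/q^k$ satisfies $|x|\le \sum_{k\ge1}\max_i|a_i|/|q|^k = \max_i|a_i|/(|q|-1)$, using $|q|>1$. Boundedness together with closedness gives compactness, which is the natural ambient setting for the contraction principle on the hyperspace of nonempty compact subsets with the Hausdorff metric.

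The heart of the argument is the invariance identity $\mathcal F_{q,A}(\Lambda_{q,A})=\Lambda_{q,A}$, which follows from the shift structure of the series. Given $x=\sum_{k\ge1}x_k/q^k\in\Lambda_{q,A}$ and any digit $a_i\in A$, one computes
$$f_i(x)=\frac1q\left(x+a_i\right)=\frac{a_i}{q}+\sum_{k\ge1}\frac{x_k}{q^{k+1}}=\frac{a_i}{q}+\sum_{k\ge2}\frac{x_{k-1}}{q^{k}},$$
which is again an element of $\Lambda_{q,A}$ with leading digit $a_i$ followed by the old sequence; hence $\bigcup_i f_i(\Lambda_{q,A})\subseteq\Lambda_{q,A}$. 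Conversely, any $x=\sum_{k\ge1}x_k/q^k\in\Lambda_{q,A}$ can be written as $f_{i}\big(\sum_{k\ge1}x_{k+1}/q^k\big)$ where $a_i=x_1$, and $\sum_{k\ge1}x_{k+1}/q^k\in\Lambda_{q,A}$, so $x\in\mathcal F_{q,A}(\Lambda_{q,A})$; this gives the reverse inclusion and thus equality.

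It remains to check that $\Lambda_{q,A}$ is closed. I would take a sequence $x^{(j)}=\sum_{k\ge1}x_k^{(j)}/q^k\in\Lambda_{q,A}$ converging to some $x\in\CC$ and extract, by a diagonal argument over the finite alphabet $A$, a subsequence along which each digit $x_k^{(j)}$ is eventually constant equal to some $x_k\in A$; then $\sum_{k\ge1}x_k/q^k$ converges and, by a standard tail estimate $|x^{(j)}-\sum_{k\le N}x_k^{(j)}/q^k|\le \max_i|a_i|/(|q|^{N}(|q|-1))$ uniform in $j$, its value must be the limit $x$, so $x\in\Lambda_{q,A}$. Once nonemptiness, compactness and invariance are established, uniqueness of the IFS fixed point (quoted before this lemma via the contraction principle, with $R=\lim_k\mathcal F^k(X)$) forces $\Lambda_{q,A}$ to be that fixed point. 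The only mildly delicate point is the closedness argument, where one must be careful that convergence of the numbers $x^{(j)}$ does not automatically give convergence of the digit sequences — this is handled by compactness of $A^{\NN}$ and the uniform tail bound, which is where $|q|>1$ is used essentially.
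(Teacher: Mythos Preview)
Your proof is correct and follows essentially the same route as the paper: both establish the invariance $\mathcal F_{q,A}(\Lambda_{q,A})=\Lambda_{q,A}$ via the shift computation $f_i(x)=a_i/q+\sum_{k\ge2}x_{k-1}/q^k$ and its inverse, then appeal to uniqueness of the IFS fixed point. You go further than the paper by also verifying that $\Lambda_{q,A}$ is nonempty and compact, which is strictly speaking required to invoke uniqueness in the Hausdorff hyperspace of compact sets; the paper's own proof checks only invariance and leaves this point implicit.
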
\black
\begin{proof}
For every
$$x=\sum_{k=1}^\infty \frac{x_k}{q^k} \in \Lambda_{q,A}$$
 we have
\begin{equation*}
 f_i(x)=\frac{1}{q}x+\frac{1}{q}a_i=\sum_{i=2}^\infty \frac{x_{k-1}}{q^k}+\frac{a_i}{q}\in \Lambda_{q,A}.
\end{equation*}
Moreover if $x_1=a_i$ then
\begin{equation*}
 f^{-1}_i(x)=qx-a_i=x_1+q\sum_{k=2}^\infty \frac{x_{k}}{q^k}-a_i=\sum_{k=1}^\infty \frac{x_{k+1}}{q^i}\in \Lambda_{n,p,A}.
\end{equation*}
Therefore
$$\bigcup_{i=1}^mf_i(\Lambda_{q,A})=\Lambda_{q,A}$$
namely $\Lambda_{n,p,A}$ is the fixed point of $\mathcal F_{q,A}$.
\end{proof}
If $q=q_{n,p}$ we define
$$\mathcal F_{n,p,A}:=\mathcal F_{q,A}.$$

\yellow\begin{lemma}\label{lz1}\black
 For every $n\in\NN$, $p>1$ and $A\subset\RR$, with $|A|<\infty$, if $\mathcal F_{n,p,A}(\conv(\Lambda_{n,p,A}))$ is convex then
\begin{equation}
 \conv(\Lambda_{n,p,A})\subseteq \mathcal F_{n,p,A}(\conv(\Lambda_{n,p,A}).
\end{equation}

\end{lemma}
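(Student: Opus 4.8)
The statement to prove is Lemma \ref{lz1}: if $\mathcal F_{n,p,A}(\conv(\Lambda_{n,p,A}))$ is convex, then $\conv(\Lambda_{n,p,A})\subseteq \mathcal F_{n,p,A}(\conv(\Lambda_{n,p,A}))$. Write $R:=\conv(\Lambda_{n,p,A})$ and $\mathcal F:=\mathcal F_{n,p,A}$ for brevity. The fundamental fact available is that $\Lambda_{n,p,A}$ is the fixed point of $\mathcal F$ (proved in the preceding lemma), so $\mathcal F(\Lambda_{n,p,A})=\Lambda_{n,p,A}$. Taking convex hulls, $\conv(\mathcal F(\Lambda_{n,p,A}))=\conv(\Lambda_{n,p,A})=R$. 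So the whole issue is relating $\conv(\mathcal F(\Lambda_{n,p,A}))$ to $\mathcal F(\conv(\Lambda_{n,p,A}))=\mathcal F(R)$.

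\textbf{Key steps.} First, since each $f_i(x)=\frac1q(x+a_i)$ is affine, it maps convex sets to convex sets and commutes with taking convex hulls: $f_i(\conv(X))=\conv(f_i(X))$ for any $X$. In particular $f_i(R)=f_i(\conv(\Lambda_{n,p,A}))=\conv(f_i(\Lambda_{n,p,A}))$, which is a convex set containing $f_i(\Lambda_{n,p,A})$. Consequently $\mathcal F(R)=\bigcup_i f_i(R)\supseteq \bigcup_i f_i(\Lambda_{n,p,A})=\mathcal F(\Lambda_{n,p,A})=\Lambda_{n,p,A}$. Thus $\mathcal F(R)$ is a set containing $\Lambda_{n,p,A}$. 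Now invoke the hypothesis: $\mathcal F(R)$ is convex. The convex hull $R=\conv(\Lambda_{n,p,A})$ is by definition the \emph{smallest} convex set containing $\Lambda_{n,p,A}$; since $\mathcal F(R)$ is a convex set containing $\Lambda_{n,p,A}$, minimality gives $R\subseteq \mathcal F(R)$, which is exactly the desired inclusion.

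\textbf{Main obstacle.} There is essentially no deep obstacle here; the only thing that needs care is the commutation $f_i(\conv(X))=\conv(f_i(X))$ for affine $f_i$, which holds because an affine map sends a convex combination $\sum \alpha_k x_k$ to the convex combination $\sum \alpha_k f_i(x_k)$ and conversely. One should also be slightly careful that $R$ is bounded (it is a polygon by Theorem \ref{t convex}) so that all the sets in play are honest compact convex sets and no pathology arises, but this is immediate. The proof is therefore a short chain: affine maps commute with $\conv$, hence $\mathcal F(R)\supseteq \Lambda_{n,p,A}$; $\mathcal F(R)$ convex by hypothesis; minimality of the convex hull finishes it.

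\begin{proof}
Write $R:=\conv(\Lambda_{n,p,A})$ and $\mathcal F:=\mathcal F_{n,p,A}=\{f_i:x\mapsto \frac1q(x+a_i)\mid a_i\in A\}$, where $q=q_{n,p}$. By the previous lemma, $\Lambda_{n,p,A}$ is the fixed point of $\mathcal F$, so that
$$\mathcal F(\Lambda_{n,p,A})=\bigcup_{i}f_i(\Lambda_{n,p,A})=\Lambda_{n,p,A}.$$
Each $f_i$ is an affine map, hence it sends convex combinations to convex combinations; consequently $f_i(\conv(X))=\conv(f_i(X))$ for every $X\subseteq\CC$. In particular
$$f_i(R)=f_i(\conv(\Lambda_{n,p,A}))=\conv(f_i(\Lambda_{n,p,A}))\supseteq f_i(\Lambda_{n,p,A}),$$
and taking the union over $i$,
$$\mathcal F(R)=\bigcup_i f_i(R)\supseteq \bigcup_i f_i(\Lambda_{n,p,A})=\mathcal F(\Lambda_{n,p,A})=\Lambda_{n,p,A}.$$
Thus $\mathcal F(R)$ is a set containing $\Lambda_{n,p,A}$, and by hypothesis $\mathcal F(R)=\mathcal F_{n,p,A}(\conv(\Lambda_{n,p,A}))$ is convex. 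Since $R=\conv(\Lambda_{n,p,A})$ is, by definition, the smallest convex set containing $\Lambda_{n,p,A}$, we conclude $R\subseteq \mathcal F(R)$, that is,
$$\conv(\Lambda_{n,p,A})\subseteq \mathcal F_{n,p,A}(\conv(\Lambda_{n,p,A})).$$
\end{proof}
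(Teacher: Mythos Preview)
Your proof is correct, and it is genuinely simpler than the paper's argument. The paper does not use the fixed-point identity $\mathcal F(\Lambda_{n,p,A})=\Lambda_{n,p,A}$ together with minimality of the convex hull; instead it appeals to the explicit description of the vertices of $\conv(\Lambda_{n,p,A})$ obtained in Theorem~\ref{t conchar}. Concretely, the paper writes each vertex as $\mathbf w_h=\frac{\max A-\min A}{p^n-1}\sigma^h(1^l0^{n-l})_q+\text{(translation)}$, computes $q\mathbf w_h$ by hand, and checks that $\mathbf w_h=f_1(\mathbf w_{h+2})$ or $\mathbf w_h=f_m(\mathbf w_{h+2})$ depending on the last digit of $\sigma^h(1^l0^{n-l})$. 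From $\{\mathbf w_h\}\subseteq\mathcal F(\{\mathbf w_h\})$ it then passes to the convex hulls.

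What each approach buys: your argument is model-independent---it works for the attractor of any IFS of affine contractions, with no information about the shape of $\conv(\Lambda_{n,p,A})$---and it is shorter. The paper's argument, while heavier, yields the finer combinatorial fact that the vertex set of $\conv(\Lambda_{n,p,A})$ is permuted by the extremal maps $f_1,f_m$, which is of independent interest even though it is not needed for this lemma. A minor remark: in your write-up the commutation $f_i(\conv(X))=\conv(f_i(X))$ is more than you need; plain monotonicity $\Lambda_{n,p,A}\subseteq R\Rightarrow f_i(\Lambda_{n,p,A})\subseteq f_i(R)$ already gives $\Lambda_{n,p,A}=\mathcal F(\Lambda_{n,p,A})\subseteq\mathcal F(R)$.
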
\black
\begin{proof}
 Consider $\mathbf w_1,\dots,\mathbf w_{2n}$,  the (possibly pairwise coincident) vertices of $\Lambda_{n,p,A}$. By Theorem \ref{t conchar}, for every $h=1,\dots,2n$

\begin{align*}
 \mathbf w_h&=\frac{\max A-\min A}{p^n-1}\mathbf v_h+\frac{1}{p^n-1}\sum_{k=0}^{n-1}\min A ~q^k\\
            &=\frac{\max A-\min A}{p^n-1}\sigma^h(1^l0^{n-l})_q+\frac{1}{p^n-1}\sum_{k=0}^{n-1}\min A ~q^k\\
\end{align*}
for some $h\in\{1,\dots,n\}$ and $l\in\{\lfloor n/2\rfloor,\lceil n/2\lceil\}$. Remark that if $\varepsilon\in\{0,1\}$ is the last digit of $\sigma^h(1^l0^{n-1})$ then
$$q\sigma^h(1^l0^{n-l})_q=\sigma^{h+1}(1^l0^{n-l})_q+\varepsilon(1-q^n).$$
Now, as $q^n=p^n$ if $\varepsilon=0$ then
\begin{align*}
f^{-1}_1(\mathbf w_h)&= q\mathbf w_h-\min A\\
&=\frac{\max A-\min A}{p^n-1}(q\sigma^{h}(1^l0^{n-l})_q)+\frac{q}{p^n-1}\sum_{k=0}^{n-1}\min A ~ q^{k}-\min A\\
&=\frac{\max A-\min A}{p^n-1}(\sigma^{h+1}(1^l0^{n-l})_q)+\frac{1}{p^n-1}\sum_{k=0}^{n-1}\min A ~ q^{k}\\
&=\mathbf w_{h+2}.
\end{align*}
therefore $f_1(\mathbf w_{h+2})=\mathbf w_h$. By a similar argument, it is possible to show that if $\varepsilon=1$ then $f_m(\mathbf w_{h+2})=\mathbf w_h$. Thus
\begin{equation}\label{wh}
 \mathbf w_h\in \mathcal F_{q,A}(\{\mathbf w_h\mid h=1,\dots,2n\}
\end{equation}
and, by the arbitrariness of $\mathbf w_h$,
\begin{equation}\label{wh}
\{\mathbf w_h\mid h=1,\dots,2n\}\subseteq \mathcal F_{q,A}(\{\mathbf w_h\mid h=1,\dots,2n\}
\end{equation}

Hence
\begin{align*}
 \conv(\Lambda_{n,p,A})=&\conv(\{\mathbf w_h\mid h=1,\dots,2n\})\\
\subseteq&\conv\left(\mathcal F_{q,A}(\{\mathbf w_h\mid h=1,\dots,2n\})\right)\\
=&\conv\left(\bigcup_{i=1}^mf_i(\{\mathbf w_h\mid h=1,\dots,2n\})\right)\\
\subseteq&\conv\left(\bigcup_{i=1}^mf_i(\conv(\{\mathbf w_h\mid h=1,\dots,2n\}))\right)\\
=&\conv(\mathcal F_{n,p,A}(\conv(\Lambda_{n,p,A})))\\
=&\mathcal F_{n,p,A}(\conv(\Lambda_{n,p,A})).
\end{align*}

\end{proof}

\green\begin{lemma}\label{lz2}\black
 For every $n\in\NN$, $p>1$ and $A\subset\RR$
\begin{equation}
  \mathcal F_{n,p,A}(\conv(\Lambda_{n,p,A}))\subseteq \conv(\Lambda_{n,p,A}).
\end{equation}
 \end{lemma}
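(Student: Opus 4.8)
The plan is to reduce the statement to the affinity of the individual maps $f_i$ composing $\mathcal F_{n,p,A}$ together with the digit-prepending invariance $f_i(\Lambda_{n,p,A})\subseteq\Lambda_{n,p,A}$ that already appeared (implicitly) in the proof of the fixed-point Lemma above. Recall that $\mathcal F_{n,p,A}(X)=\bigcup_{i=1}^m f_i(X)$ with $f_i\colon x\mapsto \tfrac1q(x+a_i)$ and $q=q_{n,p}$, so it suffices to show $f_i(\conv(\Lambda_{n,p,A}))\subseteq\conv(\Lambda_{n,p,A})$ for each $i$ and then take the union over $i$.

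The first step is to note that each $f_i$ is an affine map of $\CC$, hence it commutes with the convex hull operator: for any $X\subseteq\CC$ one has $f_i(\conv(X))=\conv(f_i(X))$, because an affine map sends a convex combination $\sum_j\alpha_j x_j$ to the convex combination $\sum_j\alpha_j f_i(x_j)$ of the images and, conversely, sends segments to segments. Applying this with $X=\Lambda_{n,p,A}$ gives $f_i(\conv(\Lambda_{n,p,A}))=\conv(f_i(\Lambda_{n,p,A}))$.

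The second step is to observe that $f_i(\Lambda_{n,p,A})\subseteq\Lambda_{n,p,A}$: if $x=\sum_{k\geq1}x_k/q^k$ with all $x_k\in A$, then $f_i(x)=\tfrac{a_i}{q}+\sum_{k\geq1}\tfrac{x_k}{q^{k+1}}=\sum_{k\geq1}\tfrac{y_k}{q^k}$ with $y_1=a_i\in A$ and $y_{k+1}=x_k\in A$, so $f_i(x)\in\Lambda_{n,p,A}$. Consequently $\conv(f_i(\Lambda_{n,p,A}))\subseteq\conv(\Lambda_{n,p,A})$, and combining with the first step,
\[
\mathcal F_{n,p,A}(\conv(\Lambda_{n,p,A}))=\bigcup_{i=1}^m f_i(\conv(\Lambda_{n,p,A}))=\bigcup_{i=1}^m\conv(f_i(\Lambda_{n,p,A}))\subseteq\conv(\Lambda_{n,p,A}),
\]
which is the assertion.

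There is essentially no obstacle here: the result is the ``easy half'' of the fixed-point characterization of $\Lambda_{n,p,A}$, transported to convex hulls, and the only point deserving an explicit mention is the interchange of $f_i$ with $\conv$, which is immediate from affinity. (One may also invoke monotonicity of $\conv$ and of the Hutchinson operator, but spelling out the affinity argument keeps the proof self-contained.)
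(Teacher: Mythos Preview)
Your proof is correct and follows essentially the same idea as the paper's: both exploit that each $f_i$ is affine and that $f_i(\Lambda_{n,p,A})\subseteq\Lambda_{n,p,A}$ from the fixed-point property. Your formulation via $f_i(\conv X)=\conv(f_i(X))$ is in fact slightly cleaner than the paper's argument, which writes an arbitrary point of $\conv(\Lambda_{n,p,A})$ as $\alpha x_1+(1-\alpha)x_2$ with $x_1,x_2\in\Lambda_{n,p,A}$---a description that is not literally valid in the plane, though the repair (allowing finite convex combinations, exactly as you do) is routine.
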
\black
\begin{proof}
 Let $y\in  \mathcal F_{n,p,A}(\conv(\Lambda_{n,p,A}))$ so that $y=f_i(x)$ for some $i=1,\dots,m$ and some $x\in \conv(\Lambda_{n,p,A})$. In particular
$x=\alpha x_1+(1-\alpha)x_2$ for some $x_1,x_2\in \Lambda_{n,p,A}$ and, consequently,
$$y=\alpha f_i(x_1)+(1-\alpha)f_i(x_2)$$
because $f_i$ is a linear map. Since $\Lambda_{n,p,q}$ is the fixed point of an IFS containing $f_i$ we have $f_i(x_1),f_i(x_2)\in \Lambda_{n,p,A}$ and, consequently,
$y\in \conv(\Lambda_{n,p,A})$. Thesis follows by the arbitrariness of $y$.
\end{proof}
\begin{lemma}\label{lz3}
 For every $n\in\NN$, $p>1$ and $A\subset\RR$, $\Lambda_{n,p,A}$ is convex if and only if $\mathcal F_{n,p,A}(\conv (\Lambda_{n,p,A}))$ is convex.
\end{lemma}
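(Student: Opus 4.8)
The plan is to prove Lemma \ref{lz3} by exhibiting a chain of equivalences anchored on the fixed-point property of the Hutchinson operator. First I would recall that, by the lemma preceding $\mathcal F_{n,p,A}$, the set $\Lambda_{n,p,A}$ is the unique fixed point of the IFS $\mathcal F_{n,p,A}$, so $\Lambda_{n,p,A}=\mathcal F_{n,p,A}(\Lambda_{n,p,A})$. The key observation is that the Hutchinson operator commutes with taking convex hulls in the following weak sense: since each $f_i$ is affine, $f_i(\conv(X))=\conv(f_i(X))$, and hence $\mathcal F_{n,p,A}(\conv(X))\subseteq\conv(\mathcal F_{n,p,A}(X))$ with equality of convex hulls, i.e. $\conv(\mathcal F_{n,p,A}(\conv(X)))=\conv(\mathcal F_{n,p,A}(X))$. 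Applying this with $X=\Lambda_{n,p,A}$ and using the fixed-point identity gives $\conv(\mathcal F_{n,p,A}(\conv(\Lambda_{n,p,A})))=\conv(\mathcal F_{n,p,A}(\Lambda_{n,p,A}))=\conv(\Lambda_{n,p,A})$.

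Next I would argue the two directions. For the forward direction, suppose $\Lambda_{n,p,A}$ is convex. Then $\conv(\Lambda_{n,p,A})=\Lambda_{n,p,A}$, and so $\mathcal F_{n,p,A}(\conv(\Lambda_{n,p,A}))=\mathcal F_{n,p,A}(\Lambda_{n,p,A})=\Lambda_{n,p,A}$, which is convex by hypothesis; this direction is essentially immediate. For the reverse direction, suppose $\mathcal F_{n,p,A}(\conv(\Lambda_{n,p,A}))$ is convex. By Lemma \ref{lz1}, convexity of $\mathcal F_{n,p,A}(\conv(\Lambda_{n,p,A}))$ gives the inclusion $\conv(\Lambda_{n,p,A})\subseteq\mathcal F_{n,p,A}(\conv(\Lambda_{n,p,A}))$, while Lemma \ref{lz2} gives the opposite inclusion $\mathcal F_{n,p,A}(\conv(\Lambda_{n,p,A}))\subseteq\conv(\Lambda_{n,p,A})$ unconditionally. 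Therefore $\mathcal F_{n,p,A}(\conv(\Lambda_{n,p,A}))=\conv(\Lambda_{n,p,A})$, so in particular $\conv(\Lambda_{n,p,A})$ is a fixed point of $\mathcal F_{n,p,A}$; by uniqueness of the fixed point, $\conv(\Lambda_{n,p,A})=\Lambda_{n,p,A}$, i.e. $\Lambda_{n,p,A}$ is convex.

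I do not expect a serious obstacle here: the lemma is really a bookkeeping corollary of Lemmas \ref{lz1} and \ref{lz2} together with the uniqueness of the IFS fixed point. The one point that needs a little care is making sure the uniqueness-of-fixed-point argument is legitimately invoked — that is, that $\conv(\Lambda_{n,p,A})$ is a genuine nonempty compact (hence valid) set on which the contraction principle for the Hutchinson operator applies, which it is since $\Lambda_{n,p,A}$ is bounded and $\conv$ of a bounded set is bounded. A secondary subtlety is that Lemma \ref{lz1} is only stated under the hypothesis that $\mathcal F_{n,p,A}(\conv(\Lambda_{n,p,A}))$ is convex, so it must be applied strictly inside the reverse direction where that hypothesis is in force; the forward direction must instead go directly through $\conv(\Lambda_{n,p,A})=\Lambda_{n,p,A}$. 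With these placements observed, the argument closes cleanly.
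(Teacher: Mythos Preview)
Your proposal is correct and follows essentially the same route as the paper: the forward direction uses $\conv(\Lambda_{n,p,A})=\Lambda_{n,p,A}$ together with the fixed-point identity, and the reverse direction combines Lemma~\ref{lz1} and Lemma~\ref{lz2} to get $\mathcal F_{n,p,A}(\conv(\Lambda_{n,p,A}))=\conv(\Lambda_{n,p,A})$ and then invokes uniqueness of the IFS fixed point. The convex-hull commutation identity in your first paragraph is true but never actually used in your argument, so you can safely drop it; your compactness remark for the uniqueness step is a welcome bit of care that the paper leaves implicit.
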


\begin{proof}
 If $\Lambda_{n,p,A}$ is convex, then the convexity of $\mathcal F_{n,p,A}(\conv(\Lambda_{n,p,A}))$ follows by
\begin{equation}
\conv(\Lambda_{n,p,A})=\Lambda_{n,p,A}=\mathcal F_{n,p,A}(\Lambda_{n,p,A})=\mathcal F_{n,p,A}(\conv(\Lambda_{n,p,A})).
\end{equation}

If otherwise $\mathcal F_{n,p,A}(\conv (\Lambda_{n,p,A}))$ is convex, then by Lemma \ref{lz1}
$$\conv (\Lambda_{n,p,A})\subseteq \mathcal F_{n,p,A}(\conv (\Lambda_{n,p,A}))$$
while Lemma \ref{lz2} implies
$$\mathcal F_{n,p,A}(\conv (\Lambda_{n,p,A}))\subseteq \conv (\Lambda_{n,p,A});$$
therefore
$$\mathcal F_{n,p,A}(\conv (\Lambda_{n,p,A}))=\conv (\Lambda_{n,p,A});$$
and by the uniqueness of the fixed point of $\mathcal F_{n,p,A}$
$$\conv (\Lambda_{n,p,A})=\Lambda_{n,p,A}.$$
\end{proof}

\begin{figure}[h]
\centering
 \subfloat[$\Lambda_{3,2^{1/3},\{0,1\}}$]{\includegraphics[scale=0.5]{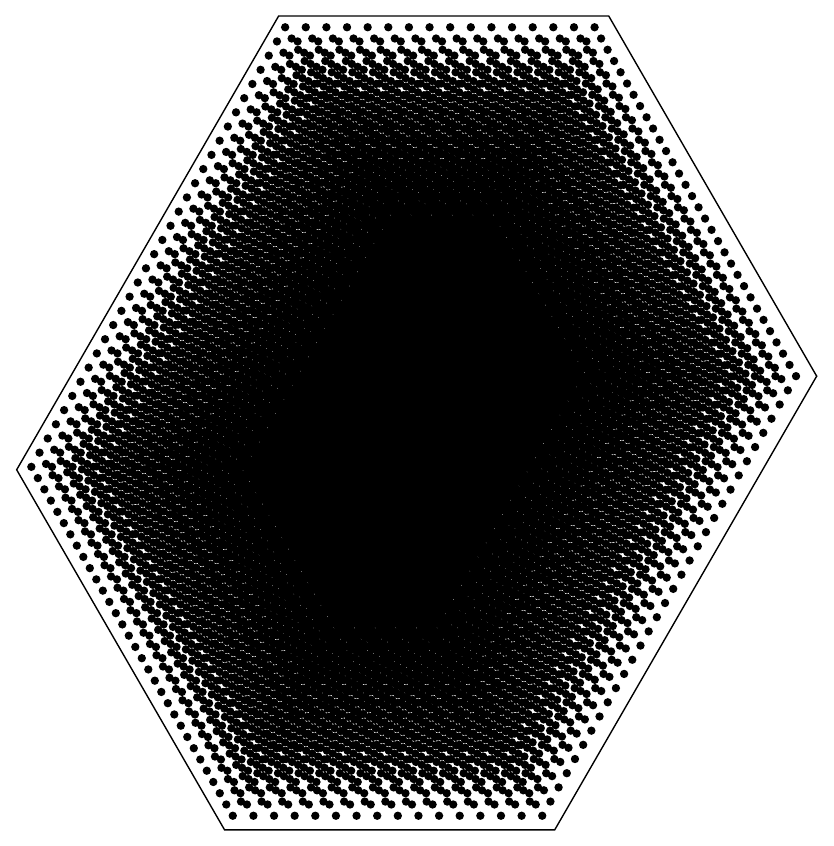}}
 \subfloat[$\Lambda_{3,2^{1/3}+0.1,\{0,1\}}$]{\includegraphics[scale=0.5]{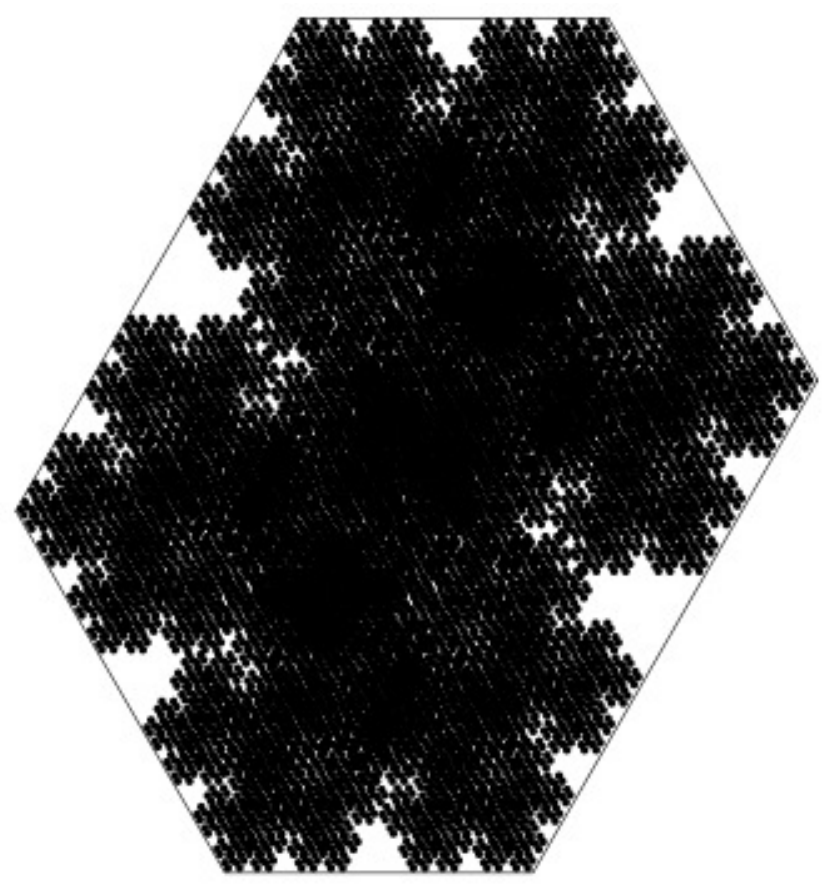}}
 \subfloat[$\Lambda_{3,2^{1/3}+0.2,\{0,1\}}$]{\includegraphics[scale=0.5]{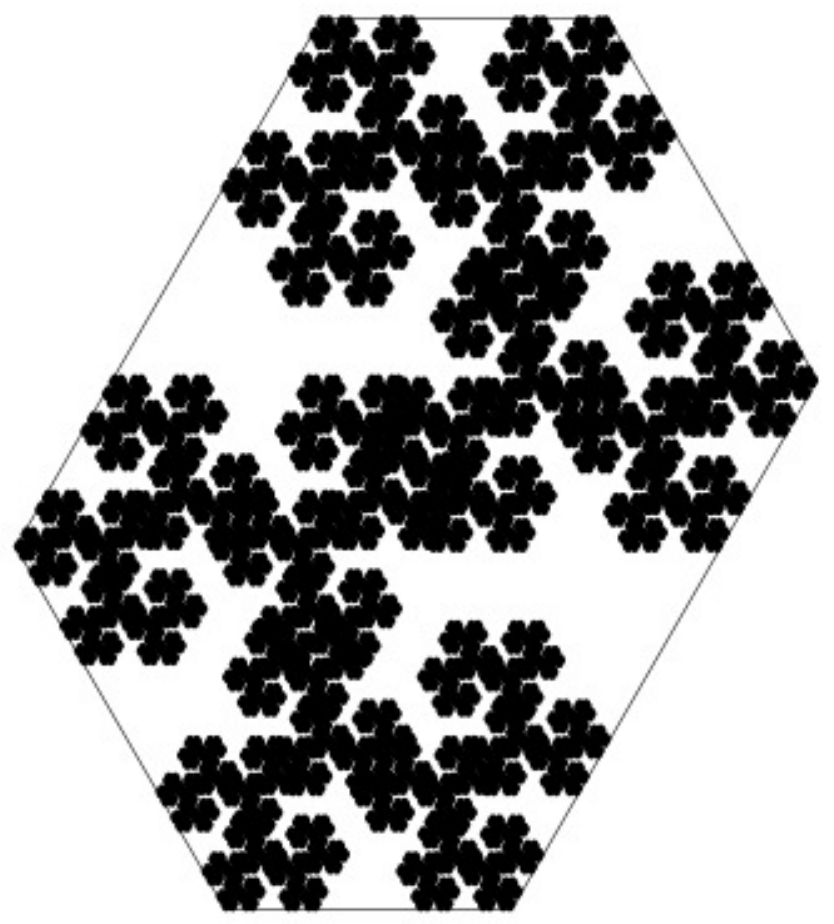}}
\\
 \subfloat[$\Lambda_{3,2^{1/3}+0.3,\{0,1\}}$]{\includegraphics[scale=0.5]{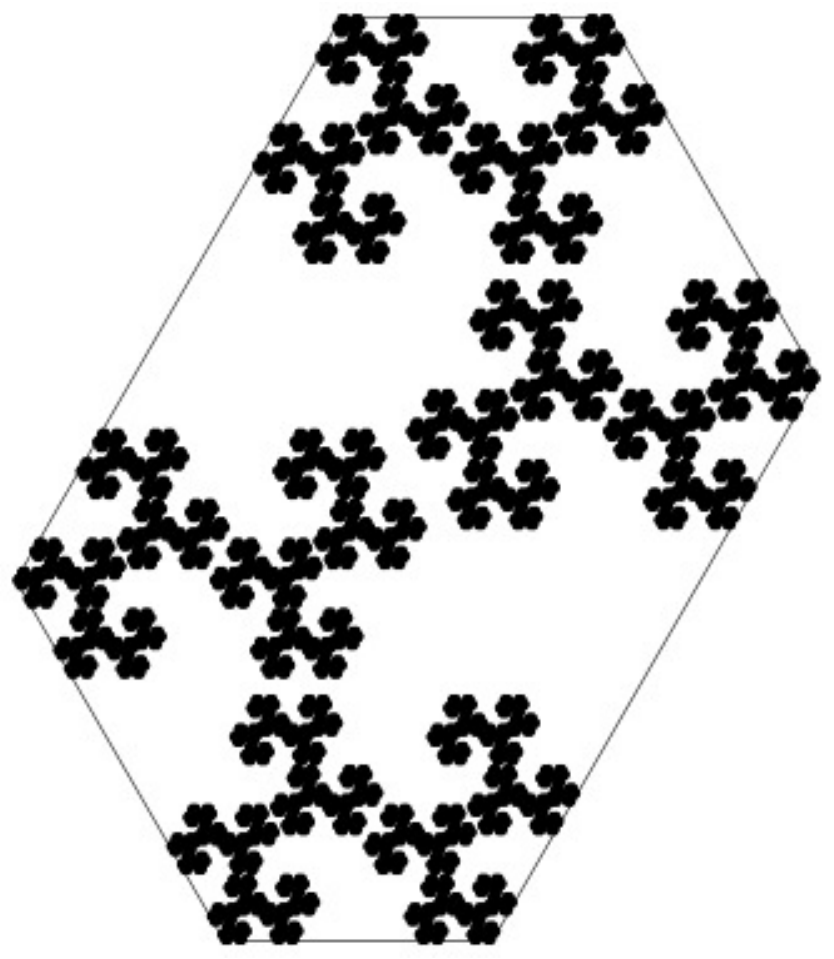}}
 \subfloat[$\Lambda_{3,2^{1/3}+0.4,\{0,1\}}$]{\includegraphics[scale=0.5]{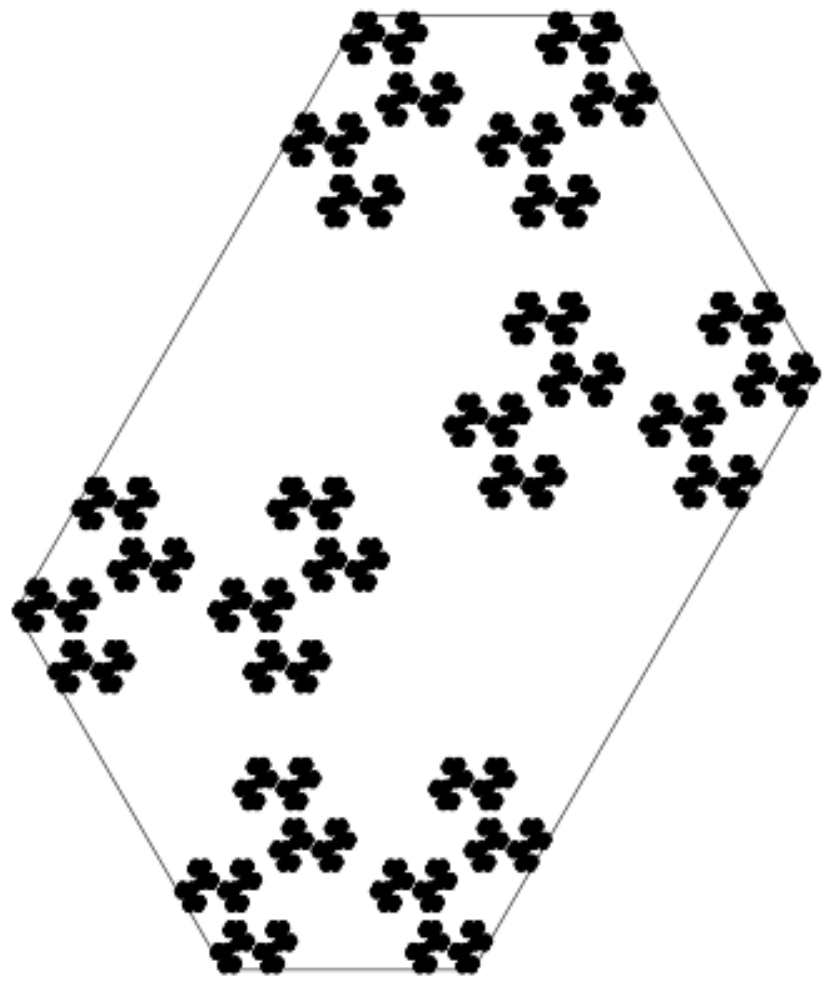}}
 \subfloat[$\Lambda_{3,2^{1/3}+0.5,\{0,1\}}$]{\includegraphics[scale=0.5]{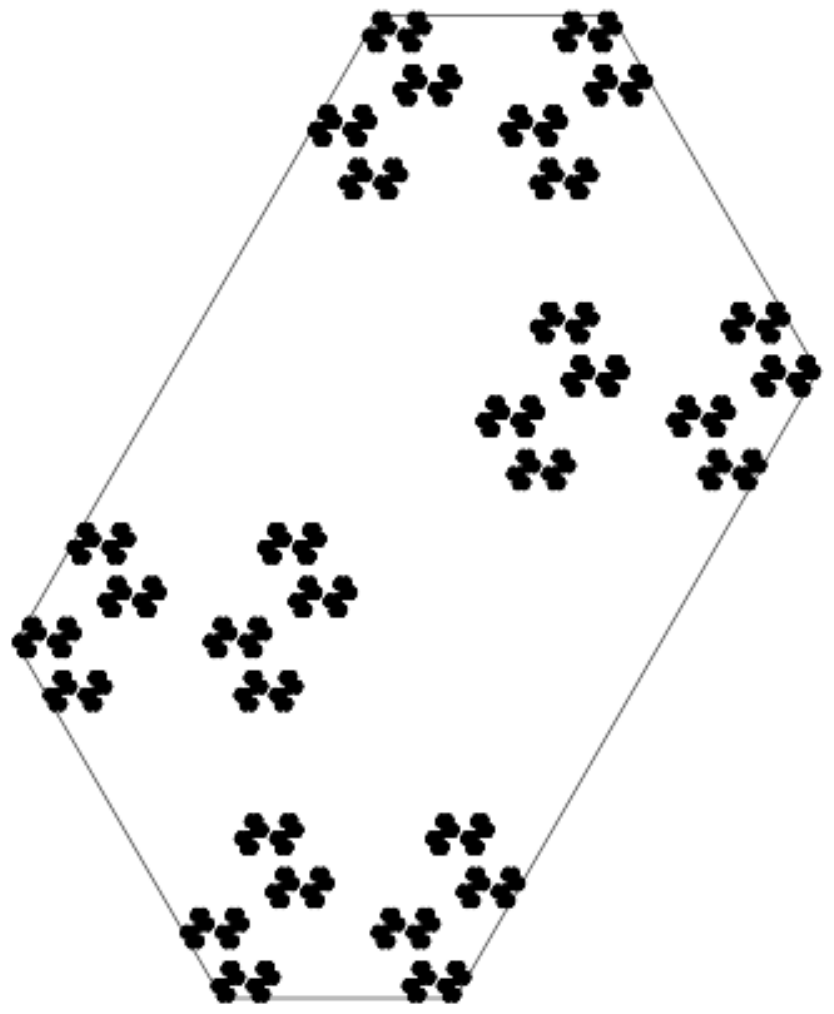}}

\caption{$\Lambda_{3,2^{1/3}+0.1 k,\{0,1\}}$, with $k=0,\dots,5$, is approximated with the set of expansions with length $14$.}
\end{figure}
\yellow
\begin{theorem}\label{t reprcon}\black
 The set of representable numbers in base $q_{n,p}$ and alphabet $A=\{a_i\mid i=1,\dots,m\}$ is convex if and only if
\begin{equation}\label{t reprcon e0}
 \max_{i=1,\dots,m-1}{a_{i+1}-a_i}\leq \frac{\max A-\min A}{p^n-1}.
\end{equation}
\end{theorem}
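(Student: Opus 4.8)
The plan is to reduce the statement to Lemma \ref{lz3}, which tells us that $\Lambda_{n,p,A}$ is convex if and only if $\mathcal F_{n,p,A}(\conv(\Lambda_{n,p,A}))$ is convex, and then to apply Proposition \ref{l:geometrical hq} to the set $\mathcal F_{n,p,A}(\conv(\Lambda_{n,p,A})) = \bigcup_{i=1}^m f_i(\conv(\Lambda_{n,p,A}))$. First I would use the characterization of $\conv(\Lambda_{n,p,A})$ from Theorem \ref{t convex}: it is a polygon whose edges are pairwise parallel to $q^0,\dots,q^{n-1}$; in particular two of its edges are parallel to $q^0=1$, i.e. to the real axis. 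Applying the map $f_i: x\mapsto (x+a_i)/q$, which is a composition of a translation by $a_i$, a rotation by $-\arg q = -2\pi/n$, and a scaling by $1/p$, I get that $f_i(\conv(\Lambda_{n,p,A}))$ is a rotated-and-scaled copy of $\conv(\Lambda_{n,p,A})$. Crucially, the edge of $\conv(\Lambda_{n,p,A})$ parallel to $q^{n-1}$ gets mapped (up to the rotation and scaling, which affect all $f_i$ identically) to an edge parallel to $q^{n-1}/q = q^{n-2}$, and more to the point, the edge parallel to $q^1$ gets mapped to an edge parallel to $q^0 = 1$, i.e. parallel to the real axis. So each $f_i(\conv(\Lambda_{n,p,A}))$ is a single fixed convex polygon $P$ — call it $P := f_1(\conv(\Lambda_{n,p,A})) - a_1/q$ translated appropriately — having (at least) two edges parallel to the real axis.

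Next I would make this translation structure explicit. We have $f_i(x) = x/q + a_i/q$, so $f_i(\conv(\Lambda_{n,p,A})) = \frac{1}{q}\conv(\Lambda_{n,p,A}) + \frac{a_i}{q}$. Writing $Q := \frac1q\conv(\Lambda_{n,p,A})$, this is $Q + a_i/q$; but I want real translations so Proposition \ref{l:geometrical hq} applies. The cleanest route: observe that the whole picture can be rotated by multiplying by $q/p = e^{2\pi i/n}$... actually better to directly exhibit, after a single rigid rotation $R$ (rotation by $-2\pi/n$) and scaling by $1/p$ applied to $\conv(\Lambda_{n,p,A})$, that $f_i(\conv(\Lambda_{n,p,A})) = R_{1/p}(\conv(\Lambda_{n,p,A})) + t_i$ where $t_i$ are real multiples of each other; indeed $a_i/q = (a_i/p) e^{-2\pi i/n}$, and I want these displacements to be real. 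Since convexity is preserved by any invertible affine map, I may apply the rotation $z\mapsto e^{2\pi i/n} z$ (and scaling by $p$) to the union $\bigcup_i f_i(\conv(\Lambda_{n,p,A}))$ without affecting its convexity; after this the pieces become $\conv(\Lambda_{n,p,A}) + a_i$ with $a_i\in\RR$ — wait, that is not right either because $\conv(\Lambda_{n,p,A})$ need not equal its own rotation. The correct bookkeeping: let $P_0 := \frac1p R\,\conv(\Lambda_{n,p,A})$ where $R$ is rotation by $-2\pi/n$; then $f_i(\conv(\Lambda_{n,p,A})) = P_0 + \frac{a_i}{p} e^{-2\pi i/n}$, and applying the rotation by $2\pi/n$ (which preserves convexity of unions) turns this into $(R^{-1}P_0) + \frac{a_i}{p}$ with $\frac{a_i}{p}\in\RR$, and $R^{-1}P_0 = \frac1p\conv(\Lambda_{n,p,A})$ which, by Theorem \ref{t convex}(a), has two edges parallel to the real axis, of length $\frac1p \cdot |q^1 \text{-edge length}|$ — here I must compute the length of the real-axis-parallel edges of $\conv(\Lambda_{n,p,A})$ and check scaling makes it exactly $1$.

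So the heart of the matter — and the step I expect to be the main obstacle — is the explicit computation that the two edges of $\frac1p\conv(\Lambda_{n,p,A})$ parallel to the real axis have length exactly $\frac{\max A - \min A}{p^n-1}$, so that after rescaling by $\frac{p^n-1}{\max A-\min A}$ everything matches the hypotheses of Proposition \ref{l:geometrical hq} with unit-length horizontal edges and real shifts $t_i = \frac{a_i}{p}\cdot\frac{p^n-1}{\max A-\min A}$ satisfying $t_{i+1}-t_i = \frac{(a_{i+1}-a_i)/p \cdot (p^n-1)}{\max A-\min A}$. Here I would use Theorem \ref{t conchar} and the explicit vertices $\mathbf v_h$: the real-axis-parallel edge of $P_{n,p}$ joins $\mathbf v_1 = (1^{\lfloor n/2\rfloor}0^{\cdots})_q$ to an adjacent vertex, and by (\ref{cos}) the edge with normal $\mathbf n_1$ (or $\mathbf n_{2n}$, etc.) is the horizontal one; its length, after accounting for the scaling factor $\frac{\max A-\min A}{p^n-1}$ relating $P_{n,p}$ to $\conv(\Lambda_{n,p,A})$ in Theorem \ref{t conchar}, must come out so that the horizontal edge of $\conv(\Lambda_{n,p,A})$ itself has length $\frac{\max A-\min A}{p-1}$ — dividing by $p$ gives... this needs care, and it is plausible the intended normalization makes $\frac1p$ times that edge equal to $\frac{\max A - \min A}{p^n-1}$ only after also invoking that $f_i$ maps the $q^1$-parallel edge (not a horizontal one of $\conv(\Lambda_{n,p,A})$) to a horizontal edge; one then needs the length of the $q^1$-edge of $\conv(\Lambda_{n,p,A})$, which by pairwise-parallelism and the self-similar construction equals $|q|$ times... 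I would trace through (\ref{t convex e2}) and Corollary \ref{t trans c1} to pin down that the edge of $P_{n,p}$ parallel to $q^{k}$ has length $1$ for each $k$ (each translation in the iterated construction adds edges of length $|q^{k-1}|\cdot$unit — no: the translation $q^{m-1}$ in $P_m = \conv(P_{m-1}\cup(P_{m-1}+q^{m-1}))$ contributes two edges parallel to $q^{m-1}$, each of length $|q^{m-1}| = p^{m-1}$). Hence the $q^1$-parallel edge of $P_{n,p}$ has length $p$, so of $\conv(\Lambda_{n,p,A})$ has length $p\cdot\frac{\max A-\min A}{p^n-1}$, so of $\frac1p\conv(\Lambda_{n,p,A})$ — which is the horizontal edge of the $f_i$-image — has length exactly $\frac{\max A-\min A}{p^n-1}$. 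Then rescaling the whole union by $\frac{p^n-1}{\max A-\min A}$ gives unit horizontal edges and real shifts $t_i$ with consecutive gaps $\frac{a_{i+1}-a_i}{\max A-\min A/(p^n-1)}\cdot\frac1p$ — and I realize the factor-of-$p$ discrepancy means I should instead not divide the shifts by $p$; the resolution is that when I apply the affine rotation-scaling to the union, the shifts $a_i/q$ become $a_i$ after multiplying by $q$, i.e. I multiply the whole configuration (pieces and shifts) by $q$, obtaining pieces $\conv(\Lambda_{n,p,A})$ rotated appropriately with shifts $a_i$; the horizontal edges then have length $\frac{(\max A - \min A)p}{p^n-1}\cdot\frac1p$... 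I will sort out this single normalization constant carefully in the writeup, but the structure is: (i) Lemma \ref{lz3} reduces to convexity of $\mathcal F_{n,p,A}(\conv\Lambda)$; (ii) an affine change of coordinates turns $\mathcal F_{n,p,A}(\conv\Lambda)$ into $\bigcup_i (P + t_i)$ with $P$ a fixed convex polygon having two horizontal edges of a computed length $\ell$ and $t_i\in\RR$; (iii) rescale so $\ell = 1$, turning the $t_i$-gaps into $\frac{p^n-1}{\max A-\min A}(a_{i+1}-a_i)$ up to the normalization constant; (iv) apply Proposition \ref{l:geometrical hq}, whose conclusion $\max_i(t_{i+1}-t_i)\le 1$ is exactly (\ref{t reprcon e0}). $\qed$
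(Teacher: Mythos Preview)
Your outline (i)--(iv) is exactly the paper's argument: reduce via Lemma~\ref{lz3} to the convexity of $\mathcal F_{n,p,A}(\conv(\Lambda_{n,p,A}))$, recognize this as an affine image of a union of real translates of a fixed convex polygon with two horizontal edges, and invoke Proposition~\ref{l:geometrical hq}. The paper sidesteps your normalization tangle by working with $P_{n,p}$ itself as the polygon in Proposition~\ref{l:geometrical hq} (its $q^0$-parallel edges have length~$1$ directly from $P_1=[0,1]$ in the recursive construction~(\ref{t convex e2})), taking shifts $t_i=a_i\frac{p^n-1}{\max A-\min A}$, and then noting that the single affine map $z\mapsto \frac{\max A-\min A}{q(p^n-1)}\,z+\frac{1}{q(p^n-1)}\sum_{k=0}^{n-1}\min A\,q^k$ carries $\bigcup_i(P_{n,p}+t_i)$ onto $\mathcal F_{n,p,A}(\conv(\Lambda_{n,p,A}))$ by Proposition~\ref{t convex p1}; equivalently, multiplying $\mathcal F_{n,p,A}(\conv(\Lambda_{n,p,A}))$ by $q$ gives $\bigcup_i(\conv(\Lambda_{n,p,A})+a_i)$ with $a_i\in\RR$, which is where your ``multiply by $q$'' instinct was heading before you second-guessed it.
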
\black

\begin{proof}
By applying Proposition \ref{l:geometrical hq} with $P=P_{n,p}$ and
$$t_i=a_i\frac{p^n-1}{\max A-\min A},$$
we get that (\ref{t reprcon e0}) holds if and only if
$$\bigcup_{i=1}^m (P_{n,p}+t_i)$$
is convex. Then (\ref{t reprcon e0}) is equivalent to the convexity of
\begin{align*}\label{52}
\overline P:= &\frac{\max A-\min A}{q(p^n-1)}\bigcup_{i=1}^m (P_{n,p}+t_i)+\frac{1}{q(p^n-1)}\sum_{k=0}^{n-1}\min A~q^k.\\
              =&\bigcup_{i=1}^m\frac{1}{q}\left(\frac{\max A-\min A}{q(p^n-1)}P_{n,p}+\frac{1}{q(p^n-1)}\sum_{k=0}^{n-1}\min A~q^k+a_i\right)
\end{align*}
By Theorem \ref{t conchar} we have
\begin{align*}
\overline P=\bigcup_{i=1}^m\frac{1}{q}\left(\conv(\Lambda_{n,p,A})+a_i\right)=\mathcal F_{n,p,A}(\conv(\Lambda_{n,p,A})),
\end{align*}
therefore $\mathcal F_{n,p,A}(\conv(\Lambda_{n,p,A}))$ is convex if and only if (\ref{t reprcon e0}) holds. Thesis hence follows by Lemma \ref{lz3}.
%
%
% $$\mathcal F_{n,p,A}(P_{n,p})=\bigcup_{i=1}^m \left(P_{n,p}+\frac{a_i}{\max A-\min A}\right)$$
% is convex.  In particular (\ref{t reprcon e0}) is equivalent to the convexity of
% $$\frac{\max A-\min A}{q(p^n-1)}\mathcal F_{n,p,A}(P_{n,p})+\frac{1}{q(p^n-1)}\sum_{k=0}^n\min A~q^k.$$
% By Theorem \ref{t conchar}
% $$\conv(\Lambda_{n,p,q})=\frac{\max A-\min A}{p^n-1}P_{n,p}+\frac{1}{p^n-1}\sum_{k=0}^n\min A~q^k$$
% and, consequently
% \begin{align*}
%  \mathcal F_{A,q}(\conv (\Lambda_{n,p,q}))&=\bigcup_{i=1}^m\frac{1}{q}\left(\frac{\max A-\min A}{p^n-1}P_{n,p}+\frac{1}{p^n-1}\sum_{k=0}^n\min A~q^k+a_i\right)\\
% &=\frac{\max A-\min A}{p^n-1}\mathcal F_{n,p,A}(P_{n,p})+\frac{1}{q(p^n-1)}\sum_{k=0}^n\min A~q^k.
% \end{align*}
% By the reasonings above, (\ref{t reprcon e0}) holds if and only if $\mathcal F_{A,q}(\conv (\Lambda_{n,p,q}))$ is convex.
% Thesis hence follows by Lemma \ref{lz3}.
\end{proof}

\green\begin{corollary}\black
 Let $A=\{a_i\mid i=1,\dots,m\}$. If
$$ \max_{i=1,\dots,m-1}{a_{i+1}-a_i}\leq \frac{\max A-\min A}{p^n-1}$$ then every
$$x\in \frac{\max A-\min A}{p^n-1}P_{n,p}+\sum_{k=0}^{n-1}\min A q^k_{n,p}$$
 has a representation in base $q_{n,p}$ and alphabet $A$.
\end{corollary}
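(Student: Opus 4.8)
The plan is to deduce the statement directly from Theorem~\ref{t reprcon} together with the explicit description of $\conv(\Lambda_{n,p,A})$ obtained in Proposition~\ref{t convex p1}. The key observation is that under the gap hypothesis $\max_{i}(a_{i+1}-a_i)\le \frac{\max A-\min A}{p^n-1}$, Theorem~\ref{t reprcon} guarantees that $\Lambda_{n,p,A}$ is a convex set, hence coincides with its own convex hull; and Proposition~\ref{t convex p1} identifies that convex hull with a rescaled and translated copy of $P_{n,p}$. The set appearing in the corollary is precisely this copy, so every one of its points lies in $\Lambda_{n,p,A}$, which by definition means it admits an expansion in base $q_{n,p}$ with digits in $A$.

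First I would invoke Theorem~\ref{t reprcon}: since the alphabet satisfies the gap condition, $\Lambda_{n,p,A}$ is convex, and a convex set equals its convex hull, so $\Lambda_{n,p,A}=\conv(\Lambda_{n,p,A})$. Next I would apply Proposition~\ref{t convex p1}, which gives
$$\conv(\Lambda_{n,p,A})=\frac{\max A-\min A}{p^n-1}P_{n,p}+\frac{1}{p^n-1}\sum_{k=0}^{n-1}\min A\,q^k_{n,p}.$$
Finally I would observe that the set $\frac{\max A-\min A}{p^n-1}P_{n,p}+\sum_{k=0}^{n-1}\min A\,q^k_{n,p}$ in the statement is this same set (after reconciling the normalisation of the translation constant against the convention fixed in Proposition~\ref{t convex p1} and Theorem~\ref{t conchar}), so any $x$ in it belongs to $\Lambda_{n,p,A}$; by definition of $\Lambda_{n,p,A}$ this means $x=\sum_{j\ge1}x_j q_{n,p}^{-j}$ for some sequence of digits $(x_j)$ in $A$.

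The argument is immediate once Theorem~\ref{t reprcon} is available, so no computational obstacle arises. The only point requiring a little care is the bookkeeping: checking that the affine map (the scaling factor $\frac{\max A-\min A}{p^n-1}$ together with the translation vector built from $\min A$ and the powers $q^0_{n,p},\dots,q^{n-1}_{n,p}$) used to pass from $P_{n,p}=\conv(X_{n,p})$ to $\conv(\Lambda_{n,p,A})$ is written consistently with the form displayed in the corollary. No ideas beyond those already developed in Sections~\ref{s convex char} and~\ref{s complex repr} are needed.
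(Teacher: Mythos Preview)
Your argument is correct and is exactly the intended one: the paper's own proof consists of the single line ``It immediately follows by Theorem~\ref{t reprcon},'' and you have simply made explicit the step (via Proposition~\ref{t convex p1}) that identifies $\conv(\Lambda_{n,p,A})$ with the displayed affine image of $P_{n,p}$. Your remark about reconciling the translation constant is also well placed, since the factor $\frac{1}{p^n-1}$ in front of $\sum_{k}\min A\,q_{n,p}^{k}$ present in Proposition~\ref{t convex p1} is missing in the statement of the corollary (and of Theorem~\ref{t conchar}); the version in Proposition~\ref{t convex p1} is the correct one.
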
\black
\begin{proof}
 It immediately follows by Theorem \ref{t reprcon}.
\end{proof}

\begin{example}\black
 If $p=2^{1/n}$ and $A=\{0,1\}$ then $\Lambda_{n,p,A}$ is a convex set coinciding with $P_{n,p}$. In particular $\Lambda_{n,p,A}$ is an $2n$-gon if $n$ is odd and it is an $n$-gon if $n$ is even.
\end{example}

\begin{example}\black
 If $A=\{0,1,\dots,\lfloor p^n\rfloor \}$ then $\Lambda_{n,p,A}$ is a convex set or, equivalently,
$$\frac{\lfloor p^n\rfloor}{p^n-1} P_{n,p}$$
 is completely representable.
\end{example}\black

% \section{Conclusions and further developments}\label{s complex overview}
%   Here a (sufficient) condition for a full Hausdorff dimension for the set of representable numbers has been given, but the general problem is open.
% The relations established in this paper between representability and properties of appropriate linear IFS's could be useful for an answer --- at least for bases with rational argument.
%  The definition of a global greedy algorithm and a partial characterization based on digit-by-digit comparison could be encouraged by these arguments, as well.

% \input{complex.bbl}
% 
% \bibliographystyle{plain}
% \bibliography{complex}
\end{document}